\documentclass{amsart}

\usepackage{amsmath,amsfonts,amssymb,amsthm}
\usepackage[numbers]{natbib}
\usepackage{bm}
\let\cal\mathcal

\usepackage{bibunits}
\usepackage{anyfontsize}
\usepackage[colorlinks=true,
            linkcolor=blue,
            citecolor=blue,
            urlcolor=blue]{hyperref}
\usepackage[margin=1.5in]{geometry}

\DeclareMathOperator*{\argmin}{arg\,min}

\def\N{\mathbb{N}}

\def\R{\mathbb{R}}
 
\def\E{\mathbb{E}}

\usepackage{relsize}
\usepackage{tikz}

\theoremstyle{plain}

\newtheorem{theorem}{Theorem}[section]
\newtheorem{lemma}[theorem]{Lemma}
\newtheorem{remark}[theorem]{Remark}

\newtheorem{Problem}{Problem}

\begin{document}

\title[Distributional Limit Theory for OT]{Distributional Limit Theory for Optimal Transport}
\thanks{The research of Eustasio del Barrio is partially supported by PID2021-128314NB-I00 and PID2024-162240NB-I00, funded by MCIN/AEI/10.13039/501100011033/FEDER}

\author{Eustasio del Barrio}
\address{Eustasio del Barrio is Professor, IMUVa,
Universidad de Valladolid, Spain.}
\email{eustasio.delbarrio@uva.es}

\author{Alberto González-Sanz}
\address{Alberto González-Sanz is Assistant Professor, Department of Statistics, Columbia University, New York, USA.}
\email{ag4855@columbia.edu}

\author{Jean-Michel Loubes}
\address{Jean-Michel Loubes is Research Director at INRIA, Toulouse, France.}
\email{loubes@math.univ-toulouse.fr}

\author{David Rodríguez-Vítores}
\address{David Rodríguez-Vítores is post-doc researcher at IMUVa, , Universidad de Valladolid, Spain.}
\email{david.rodriguez.vitores@uva.es}

\begin{abstract}
 Optimal Transport (OT) is a resource allocation problem with applications in biology, data science, economics and statistics, among others. In some of the applications, practitioners have access to samples which approximate the continuous measure. Hence the quantities  of interest derived from OT --- plans, maps and costs --- are only available in their empirical versions.  Statistical inference on OT aims at finding confidence intervals of the population plans, maps and costs,  which, in recent years, have gained an increasing interest in the statistical community. In this paper we provide a comprehensive review of the most influential results on this research field. 
\end{abstract}

\keywords{Central Limit Theorem, Optimal Transport, Wasserstein Distance}

\maketitle
\markboth{DEL BARRIO, GONZÁLEZ-SANZ, LOUBES, AND RODRÍGUEZ-VÍTORES}
         {DISTRIBUTIONAL LIMIT THEORY FOR OPTIMAL
TRANSPORT}
\section{Introduction}
Optimal transport (OT) is, by now, a regular member of the standard toolkit in many fields of Data Science. The list includes computer vision \cite{BonneelDigne2023}, flow cytometry gating \cite{del2020optimalflow,freulon2023cytopt}, domain adaptation \cite{chang2022unified}, fair learning \cite{deLara.et.al.JMLR.2024,gordaliza2019obtaining} or generative modeling in AI \cite{pooladian2024pluginestimationschrodingerbridges}, to name only a few examples.
While optimal transport (OT) has long been a fundamental problem in mathematics, its recent surge in popularity within data science is largely driven by computational advances, particularly the introduction of the Sinkhorn algorithm \cite{Cuturi2013SinkhornDL, Peyre.Cuturi.2019.Book}, which has significantly improved the scalability of OT-based methods{.}
The use of optimal transport (OT) for inferential purposes has a long history, but until recently, 
it was \textit{`hindered by the lack of distributional limits'} \cite{SommerfeldMunk2018}. 
While there had been some early attempts to apply OT in goodness-of-fit problems \cite{delBarrioetal1999a, delBarrioGineUtzet2005, freitag2005hadamard,Munk.Czado.1998.SeriesB}, these efforts were largely restricted to the univariate setting, limiting their broader applicability.
To be more precise, 
let us fix some notation. We limit our presentation to the Euclidean setup. We recall that the OT problem is the minimization problem
\begin{align}\label{eq:plans}
\mathcal{T}_c(P,Q):=\min_{\pi \in \Pi(P,Q)}\int_{\R^d\times \R^d} c(x,y) d \pi(  {x},   {y}),
\end{align}
where $P$ and $Q$ are Borel probabilities on $\R^d$, $c:\R^d\times\R^d\to \R$ is a  cost function and $\Pi(P,Q)$ stands for the set of joint probabilities on $\R^d\times\R^d$ with marginals $P$ and $Q$. We will write $c_p(x,y)=\|x-y\|^p$ and $\mathcal{T}_p$ instead of $\mathcal{T}_c$ when $c=c_p$, $p\geq 1$. An {\it OT plan} is a minimizer in \eqref{eq:plans}. Under mild assumptions (see \cite[Theorem~5.10]{villani2008optimal}), the optimization problem \eqref{eq:plans} admits a dual formulation given by
\begin{align}\label{eq:potentials}
\mathcal{T}_c(P,Q)=\max_{f,g}\int f({x})dP(x)+\int g(  {y}) dQ(  {y}),
\end{align}
where the supremum is taken over the class $\Phi_c(P,Q)$ consisting of pairs $(f,g)\in L_1(P)\times L_1(Q)$ such that  
$$ f(  {x})+g(  {y})\leq c(  {x},  {y}),\quad  x,y\in\mathbb{R}^d.$$ A pair of {\it OT potentials} $(f_0,g_0)$ is any maximizer  in  \eqref{eq:potentials}. Under some assumptions on the cost function and the probability measures it is well known (see, e.g., Theorem 2.44 in \cite{villani}) that the optimal plan in \eqref{eq:plans} is of the form $\pi=({\rm I}_d,T)\sharp P$, where $T$ is such that
\begin{eqnarray}\label{eq:maps}
\mathcal{T}_c(P,Q)&=&\int_{\R^d} c(x,T(x)) dP(x)\\
\nonumber
&=&\min_{S:\, S\sharp P=Q}\int_{\R^d} c(x,S(x)) dP(x).
\end{eqnarray}
Such a minimizer is called an \textit{OT map}. Here, and throughout this paper, $T\sharp P$ denotes the push-forward of $P$ by $T$, namely, the probability  {defined by $(T\sharp P)(A)=P(T^{-1}(A))$ for { every} Borel set $A$.}

The basic objects of interest in the goodness-of-fit problems cited above were the empirical versions of $\mathcal{T}_c(P,Q)$, namely, given the observation of an i.i.d.\ sample  $X_1, \dots, X_n\sim P$ we look at $\mathcal{T}_c(P_n,Q)$, where $P_n$ denotes the empirical measure on the sample (for the sake of brevity we focus here on one-sample problems, but all the results that we present can be adapted to the two-sample case). This paper will present an essentially complete description of the distributional limit theory for $\mathcal{T}_c(P_n,Q)$. To avoid excessive technicalities we limit ourselves to the case of potential costs, $c_p$, $p\geq 1$, which has received a great deal of attention in the literature. The related \textit{Wasserstein distance}, $\mathcal{W}_p(P,Q)=(\mathcal{T}_p(P,Q))^{\frac{1}{p}}$ is a metric on the set of probability measures on $\mathbb{R}^d$ with finite moment of order $p$ that metrizes weak convergence of probabilities plus convergence of $p$-th moments. We refer again to \cite{villani,villani2008optimal} for these and other background results on OT. Distributional limit theorems for $\mathcal{T}_p(P_n,Q)$ and related functionals are the basis of the already cited applications of OT in {goodness-of-fit problems} \cite{ delBarrioetal1999a, delBarrioGineUtzet2005,freitag2007nonparametric,Munk.Czado.1998.SeriesB} { or in change point analysis \cite{10.1214/20-AOS2036}}.

We will also look at the empirical OT potentials, plans and maps. These are objects of interest by themselves. The OT map to or from a reference measure is the center-outward distribution or quantile function of \cite{Chernozhukovetal2017,Hallinetal2021}. These maps can be used to define multivariate ranks which can be used to build efficient inference procedures in a semiparametric or nonparametric setup, see  {Section~\ref{Section:quantiles}} or  \cite{MR4646617,deb2023pitmanefficiencylowerbounds,Deb-Sen.JASA} and references therein.   This gives additional motivation to look for distributional limit theorems for these OT maps and we try to give a short account about the available results on this topic.

An already well-known fact about OT is that estimation suffers from the curse of dimensionality, either if the goal is to estimate the OT map (see \cite{Hutter.Rigollet.2021.AOS}) or the cost \cite{niles2022estimation,manole.weed.2024.AoAP}. { We summarize here the main results in this regard for the squared Euclidean cost. We define 
$$ \beta(n,d)=\begin{cases}
n^{-1} & \text{if } d=1, \\
\log(n) n^{-1} & \text{if } d=2, \\
 n^{-\frac{2}{d}} & \text{if } d\geq 3. 
\end{cases}
$$ 
}
{\begin{theorem}(Curse of dimensionality) \label{theo:curse}
Assume that $P$ and $Q$ are supported in convex sets and admit H\"older continuous densities bounded away from zero and infinity in their supports. 
Then the following hold
\begin{enumerate}
    \item If $P=Q$,
    $ \E[\mathcal{T}_2(P_n,P)] \lesssim  \beta(n,d) .$
    \item If the OT map (for the squared Euclidean cost) $T$ from $Q$  to $P$ is $\mathcal{C}^1$ and strongly monotone, 
    $$ | \E[\mathcal{T}_2(P_n,Q)]- \mathcal{T}_2(P,Q)| \lesssim  \beta(n,d) $$
    and 
    $$ \E[\|T- \hat{T}\|^2_{L^2(Q)}] \lesssim  \beta(n,d), $$
    where $\hat{T}$ denotes the OT map from $Q$  to $P_n$. 
\end{enumerate}
\end{theorem}
The label `curse of dimensionality' in Theorem~\ref{theo:curse} refers to the fact that the rate $\beta(n,d)$ is sharp up to logarithmic factors \cite{Ambrosio-EJS,Hutter.Rigollet.2021.AOS,niles2022estimation}, in the sense that there exist $P,Q\in\mathcal{P}_2(\mathbb{R}^d)$ for which $| \E[\mathcal{T}_2(P_n,Q)]- \mathcal{T}_2(P,Q)| \gtrsim  \beta(n,d)$, the same result remaining true if $\mathcal{T}_2(P_n,Q)$ were replaced by an alternative estimator.
Theorem~\ref{theo:curse} is the compilation of several results. Firstly, \cite{Fournier.Guillin.2014.PTRF} showed that 
$$ \E[\mathcal{T}_2(P_n,P)] \lesssim  \begin{cases}
n^{-1/2} & \text{if } d=1,2,3, \\
\log(n) n^{-1/2} & \text{if } d=4, \\
 n^{-\frac{2}{d}} & \text{if } d\geq 5. \\
\end{cases}$$
Secondly, \cite{Ajtai-1984} showed point 1 of  Theorem~\ref{theo:curse} for $P$ being the uniform measure on the 2-dimensional unit square (see also \cite{AmbrosioStraTrevisan,Ledoux_conjecture}).  This was improved later by \cite{Benedetto.et.al.,Ambrosio-EJS} to the one displayed in Theorem~\ref{theo:curse},~part~1. The second statement follows from \cite[Theorem~6]{Manole.et.al.2024.AoS}.  Convergence rates for different costs can be found in \cite{Fournier.Guillin.2014.PTRF}.
}

The curse of dimensionality  can be alleviated through different paths. {The first one is by {\it a priori} smoothness assumptions on the OT map. }  The minimax rate of estimation of the OT map { and cost} improves under smoothness {(see \cite{Hutter.Rigollet.2021.AOS} for the map and \cite{Niles-Weed-Berthet.22.AOS,Singh.et.al.2018.Nips} for the cost)} and one can look for estimators that adapt to smoothness (and remain computationally tractable), see \cite{Manole.et.al.2024.AoS}. Some recent work provides distributional limit theorems for these improved estimators \cite{Manole.et.al.2024.Preprint}. 

Smoothness is not the only possible workaround to the curse of dimensionality in OT estimation. The discrete setup is rich enough for an interesting set of applications and has been extensively analyzed (see \cite{SamworthJohnson2004,SommerfeldMunk2018,TamelingSommerfeldMunk2019}). Additionally, OT satisfies a \textit{lower complexity adaptation} principle (see \cite{Hundrieser.et.al.2024.AIHP}). Roughly speaking, this means that the statistical complexity of OT estimation is determined by the statistical complexity in the estimation of the ``easy'' measure, $P$ or $Q$. {This enables distributional
limit theorems in low dimension} or in the semidiscrete setup (see \cite{delBarrioetAl.2024.BJ,Hundrieser.et.al.2024.AIHP}).

{Beyond the smooth and low-dimensional regimes}, alternative formulations of the OT problem have received a great deal of attention in the literature. Entropic optimal transport (see \cite{Cuturi2013SinkhornDL, Peyre.Cuturi.2019.Book}) is a very attractive choice, both in computational terms, thanks to the celebrated Sinkhorn algorithm, and in statistical complexity (see \cite{delBarrio.et.al.2023.SIMODS.Improved,Mena.Weed.2019.Nips}). The estimation of the entropic OT cost (EOT cost in the sequel) is not affected by the curse of dimensionality and this opens the possibility, for instance, to build asymptotically valid confidence intervals for the EOT cost in any dimension.

An alternative way to benefit from OT data analysis tools {is to consider OT between
projections which mitigates the issues inherent to high-dimensional problems}. Of all the possible approaches, the sliced Wasserstein metric, looking at one-dimensional projections, has received most attention in the literature (see \cite{Goldfeld.et.al.2022.statisticalinferenceregularizedoptimal,han2024max,Manole.et.al.EJS,Xi.Weed.2022.Nips,xu2022central}). In this line we comment briefly on the related CLTs, including some possible improvements over the available literature.

{In this work, we present a review of the distributional limit theorems available and a discussion on the different proof techniques leading to them. As a general rule, different arguments are needed for $p=1$ and $p>1$, as well as for the null case $P=Q$ and the non-null case $P\neq Q$. The latter distinction is also relevant for the statistical applications discussed above: null results are naturally related to goodness-of-fit testing \cite{ delBarrioetal1999a, delBarrioGineUtzet2005,freitag2007nonparametric,Munk.Czado.1998.SeriesB}, whereas non-null results are the basis for confidence intervals \cite{delBarrio.et.al.2023.SIMODS.Improved,gordaliza2019obtaining,Manole.et.al.EJS}.} While we do not aim to give a complete account of the technical details underlying the proofs of all the results that we summarize here, we find it convenient to include a few key guidelines to the main approaches. The OT problem, either in its primal \eqref{eq:plans} or dual form \eqref{eq:potentials} is a linear minimization problem over a convex set. However, from the point of view of distributional limit theory, other formulations of the problem may be more convenient. The Monge formulation \eqref{eq:maps} may look more direct, but the problem becomes highly non-linear and some linearization technique could provide a useful approach to the problem. Different possibilities have been considered. In the case of discrete probability measures the transportation cost functional is directionally Hadamard differentiable. This fact, together with a delta method for nonlinear derivatives is enough to yield CLTs for transportation cost{s} (see \cite{SommerfeldMunk2018, TamelingSommerfeldMunk2019}). Hadamard differentiability of supremum type functionals (see \cite{CarcamoCuevasRogriguez2020}) can also be applied to obtain CLTs for the empirical transportation cost in the semidiscrete setup (see \cite{delBarrioetAl.2024.BJ,Hundrieser.et.al.BJ.2024.Unifying}). Additionally, this approach can yield distributional limit theorems for OT {potentials.} On the other hand, if we put the main focus on the transportation cost, other linearization tools can give sharper results. The Efron-Stein inequality for variances of functions of independent random elements (see, e.g., \cite{MR3185193}) turns out to be a very convenient tool in this field. As an example of this we can refer to Lemma~\ref{Efron_Stein_L1} in the supplementary material to this paper: if $P$ and $Q$ are Borel probability measures on $\mathbb{R}^d$, $Q$ has finite mean and $P$ has finite variance $\sigma^2(P)$ then a trivial application of the Efron-Stein inequality shows that
\begin{equation}\label{l1_var_bound_quote}
{\rm Var}(\mathcal{T}_1(P_n,Q))\leq \frac {\sigma^2(P)}n.
\end{equation}
A slightly more elaborate application of the Efron-Stein inequality yields a similar bound for the case $p>1$ (under the assumption that $P$ and $Q$ have finite moments of order $2p$ and $Q$ has a density; see Theorem 3.1 in \cite{delBarrio.Loubes.2019.AoP.CLTquadratic} and Corollary 4.3 in \cite{delBarrio.GonzalezSanz.Loubes.AIHP.2024.CLTgeneral}).
As a consequence, we see that $\sqrt{n}(\mathcal{T}_p(P_n,Q)-\mathbb{E}[\mathcal{T}_p(P_n,Q)])$ is stochastically bounded under very mild assumptions. This suggests to look for CLTs for the fluctuation of the empirical transportation cost with respect to its expected value and, indeed, we can also use the Efron-Stein inequality as a linearization tool to obtain CLTs for this fluctuation with great generality, as we will show in later sections.

The remaining sections of this paper are organized as follows. Section~\ref{SECTION:1D} presents CLTs for one-dimensional OT objects. While this may look very narrow in scope, some of the key features of the theory show up in this simpler setup: CLTs for the fluctuation of the OT cost with respect to its expected value hold with great generality; Gaussianity of the limiting distribution of the empirical OT cost is related to the regularity of the cost; 
limit theorems for the OT plans and maps require more restrictive 
assumptions. These aspects are revisited in Section~\ref{general_dimension}  for general dimension. Here we pay special attention to those cases in which OT is not affected by the curse of dimensionality (low-dimensional, discrete or semi-discrete setup), linking the results to the lower complexity adaptation principle of OT observed in \cite{Hundrieser.et.al.2024.AIHP}. Section~\ref{Section:regularized} presents distributional limit theory for regularized OT. We consider entropic and smooth OT. From  {the} point of view of practitioners EOT is most attractive due to the availability of efficient computational tools. We present limit theorems for the cost, potentials and plans and also for the related Sinkhorn divergence. In the case of smooth OT we deal with the cost and also with some estimators of the OT map that adapt to smoothness. This is a very interesting field of research since, theoretically, one can construct estimators of the OT map that adapt to smoothness and avoid the curse of dimensionality. However, the available theory covers a somewhat limited setup and we discuss possible ways to extend it to cover more natural cases. Regrettably, this part of the discussion is very technical, but we believe that the topic is interesting enough to include this material in the paper. We continue with Section~\ref{Section:Further-OT}, devoted to sliced { and Gromov--Wasserstein }distances. These combine some convenient features of one-dimensional OT with the ability to capture differences in multivariate distributions. We review the available results, paying special attention to some recent improvements over the existing literature. 
While we focus mainly on results and outline proof techniques, we include a small section (Section~\ref{section:applications}) discussing the potential applications of some of the distributional results presented in this paper. Finally, we include a section with a small sample of open problems which we believe deserve further investigation.

\section{One-dimensional central limit theorems}\label{SECTION:1D}
We consider in this section empirical objects in OT problems on the real line. A crucial simplification here is that, if $P$ and $Q$ are probabilities on the real line with distribution functions $F$ and $G$, respectively, and $p\geq 1$, then
\begin{equation}\label{1dquantiles}
\mathcal{T}_p(P,Q)=\int_0^1|F^{-1}(t)-G^{-1}(t)|^pdt,
\end{equation}
where $F^{-1}$, $G^{-1}$ denote the corresponding quantile functions. A further simplification holds in the case $p=1$, since
\begin{equation}\label{1dempirical}
\mathcal{T}_1(P,Q)=\int_{\mathbb{R}}|F(x)-G(x)|dx,
\end{equation}
that is, $\mathcal{T}_1(P,Q)$ is simply the $L_1(\mathbb{R})$-norm of the difference between distribution functions. The link between OT and distribution and quantile functions goes further. If $P$ has no atoms then $F$ is the OT map between $P$ and the uniform distribution on $(0,1)$, $U(0,1)$. In all cases, $F^{-1}$ is the OT map between $U(0,1)$ and $P$. While the empirical d.f., say $F_n$, is not the OT map between the empirical measure, $P_n$, and $U(0,1)$ (such an object does not exist since $P_n$ is discrete), it is a consistent estimator of the OT map $F$ in different senses. $F_n^{-1}$ is the OT map between $U(0,1)$ and $P_n$. All these facts turn the study of distributional limit laws for empirical estimators of OT maps on the real line into exercises about weak convergence of empirical or quantile processes in some $L_p$ space. However, weak convergence of these processes typically requires stronger assumptions than weak convergence of the transportation cost, which, as we will see next, holds under very mild assumptions if we look at the fluctuation between the empirical cost and its expected value. Later in this section we discuss the role of the centering constants in the CLTs for the transportation cost. We complete the section with a study of CLTs for OT maps and plans in this one-dimensional setup.

\subsection{CLTs for the fluctuation}

In the one-dimensional case ($d=1$) the fluctuation of the empirical transportation cost is fully understood through the following result. {Here ${\rm sgn}(x)$ equals $-1, 0$ or $+1$ for $x<0$, $x=0$ or $x>0$, respectively.}
{
\begin{theorem}\label{1dCLT} Fix $d=1$. Assume that $P$ and $Q$ have finite moments of order $2p$ and that $Q$ has a continuous quantile function.
Then the following hold: 
\begin{enumerate}
    \item If $p>1$, \begin{equation}\label{unbiased_CLT_1d}
\sqrt{n}(\mathcal{T}_p(P_n,Q)-\mathbb{E}[\mathcal{T}_p(P_n,Q)])\underset w \to N(0,\sigma_p^2),
\end{equation}
where $\sigma_p^2=\sigma_p^2(P,Q)\geq 0$, defined in \eqref{limiting_variance_1d}, is strictly positive unless $P=Q$ or $P$ is Dirac's measure on a point. 
\item If $p=1$, \begin{equation}\label{unbiased_CLT_1d_p1}
\sqrt{n}(\mathcal{T}_1(P_n,Q)-\mathbb{E}[\mathcal{T}_1(P_n,Q)])\underset w \to \gamma(P,Q),
\end{equation}
where \begin{equation}\label{def_G_PQ}
\gamma(P,Q):=\int_{\mathbb{R}} (v_{F,G}(x)-\mathbb{E}[v_{F,G}(x)])dx,
\end{equation}
for 
\begin{equation}   \label{def_v_FG}
    v_{F,G}(x)= \mathbb{I}_{\{F(x)=G(x)\}}|B\circ F(x)|
+ {\rm sgn}(F(x)-G(x)) B\circ F(x)
\end{equation}
and $B$ is a standard Brownian bridge on $[0,1]$. Furthermore, the distribution of $\gamma(P,Q)$ is Gaussian if $\ell(F=G)=0$.
\end{enumerate}
\end{theorem}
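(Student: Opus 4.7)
The plan is to treat the cases $p=1$ and $p>1$ separately, exploiting the one-dimensional representations \eqref{1dquantiles} and \eqref{1dempirical}. A common feature is that one centers by $\mathbb{E}[\mathcal{T}_p(P_n,Q)]$ rather than by $\mathcal{T}_p(P,Q)$; the Efron--Stein bound \eqref{l1_var_bound_quote} (and its $p>1$ analogue) then makes the centered fluctuation $O_{\mathbb{P}}(n^{-1/2})$, while the bias $\mathbb{E}[\mathcal{T}_p(P_n,Q)]-\mathcal{T}_p(P,Q)$ may decay strictly slower under such weak hypotheses, so this bias-correction is essential, not a convenience.

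For $p=1$, I would write $\mathcal{T}_1(P_n,Q)=\phi(F_n-G)$ with $\phi(h)=\int_{\mathbb{R}}|h(x)|dx$. The map $\phi$ is Hadamard \emph{directionally} differentiable at $F-G$, with
$$\phi'_{F-G}(h)(x)=\mathrm{sgn}(F(x)-G(x))h(x)\mathbb{I}(F(x)\ne G(x))+|h(x)|\mathbb{I}(F(x)=G(x)).$$
Under the moment hypotheses, classical empirical process theory yields $\sqrt{n}(F_n-F)\rightsquigarrow B\circ F$ in $L^1(\mathbb{R})$, and the delta method for Hadamard directionally differentiable functionals then gives $\sqrt{n}(\mathcal{T}_1(P_n,Q)-\mathcal{T}_1(P,Q))\rightsquigarrow \int v_{F,G}(x)dx$. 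Switching the centering to $\mathbb{E}[\mathcal{T}_1(P_n,Q)]$ is justified by uniform integrability provided by \eqref{l1_var_bound_quote}, producing \eqref{unbiased_CLT_1d_p1}. When $\ell(F=G)=0$ the non-linear term $|h|\mathbb{I}(F=G)$ vanishes Lebesgue-a.e., so $\gamma(P,Q)$ collapses to a centered linear functional of the Gaussian process $B\circ F$ and is thus Gaussian.

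For $p>1$, the quantile representation \eqref{1dquantiles} combined with the pointwise identity
$$|F_n^{-1}(t)-G^{-1}(t)|^p-|F^{-1}(t)-G^{-1}(t)|^p=\int_{F^{-1}(t)}^{F_n^{-1}(t)}p|s-G^{-1}(t)|^{p-1}\mathrm{sgn}(s-G^{-1}(t))\,ds$$
singles out the leading term $\int_0^1 h_p(t)(F_n^{-1}(t)-F^{-1}(t))dt$, with $h_p(t)=p|F^{-1}(t)-G^{-1}(t)|^{p-1}\mathrm{sgn}(F^{-1}(t)-G^{-1}(t))$. Via a quantile-to-empirical-process identity (an integration by parts turning $F_n^{-1}-F^{-1}$ into $F_n-F$) this rewrites as a centered sum of i.i.d.\ terms, namely the H\'ajek projection of $\mathcal{T}_p(P_n,Q)$, and the classical CLT delivers the Gaussian limit with variance $\sigma_p^2(P,Q)$. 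The remainder of the first-order expansion is then controlled by Efron--Stein-type variance bounds, as in \cite{delBarrio.Loubes.2019.AoP.CLTquadratic,delBarrio.GonzalezSanz.Loubes.AIHP.2024.CLTgeneral}, giving negligibility once $\mathbb{E}[\mathcal{T}_p(P_n,Q)]$ is subtracted.

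The main obstacle is the $p>1$ case in the absence of a density on $P$: the quantile process $\sqrt{n}(F_n^{-1}-F^{-1})$ cannot be controlled uniformly in $t$, so the linearization must be performed under the integral sign and the remainder bounded variance-wise rather than pathwise. This is exactly why Efron--Stein is more flexible than a classical delta method here, and why the continuity of $G^{-1}$, rather than of $F$, is the right regularity hypothesis, since it keeps $h_p$ continuous. A subsidiary delicate point is the strict positivity of $\sigma_p^2(P,Q)$: this reduces to non-constancy of the i.i.d.\ summand in the H\'ajek projection, which fails precisely in the degenerate cases $P=Q$ or $P=\delta_{x_0}$.
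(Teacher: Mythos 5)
Your plan for the case $p=1$ is the functional delta method: write $\mathcal T_1(P_n,Q)=\phi(F_n-G)$ with $\phi(h)=\int_{\mathbb R}|h|$, use Hadamard directional differentiability of $\phi$ at $F-G$, and feed in the weak limit $\sqrt n(F_n-F)\rightsquigarrow B\circ F$ in $L^1(\mathbb R)$. This is clean, but it does not work under the stated hypothesis of finite second moment of $P$. Weak convergence of $\sqrt n(F_n-F)$ in $L^1(\mathbb R)$ is \emph{equivalent} to the condition $\int_{\mathbb R}\sqrt{F(1-F)}\,dx<\infty$ (the interpolation condition \eqref{assumption:L21}), which is strictly stronger than finite variance — the paper itself remarks that \eqref{assumption:L21} is implied by a $(2+\delta)$-moment and implies finite variance, but the converse implications fail. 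Indeed, without \eqref{assumption:L21} the limit process $B\circ F$ need not even have trajectories in $L^1(\mathbb R)$, so there is no $L^1(\mathbb R)$-valued Gaussian limit to plug into a delta method and $\gamma(P,Q)$ cannot be constructed as $\phi'_{F-G}(B\circ F)$. The whole point of the theorem as stated (see the sentence following it: ``To our knowledge, the case $p=1$ in this generality is new'') is precisely to dispense with \eqref{assumption:L21}.

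The paper circumvents this by a truncation argument that your sketch omits. It splits $\mathcal T_1(P_n,Q)$ into $Z_{n,M,2}=\int_{-M}^{M}|F_n-G|\,dx$ and the tail $Z_{n,M,1}=\int_{[-M,M]^c}|F_n-G|\,dx$. On $[-M,M]$ the empirical process does converge in $L^1$ (the integrability condition restricted to a compact interval is automatic), so the pointwise and dominated-convergence argument gives $\sqrt n\bigl(Z_{n,M,2}-\mathbb E Z_{n,M,2}\bigr)\rightsquigarrow\int_{-M}^{M}(v_{F,G}-\mathbb E v_{F,G})$; the tail is controlled not by empirical-process convergence but by a refined Efron--Stein bound, $n\,\mathrm{Var}(Z_{n,M,1})\le\mathbb E\bigl[|X_1-X_1'|^2\mathbb I(X_1>M\ \text{or}\ X_1'>M)\bigr]\to0$ as $M\to\infty$. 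Finally, the limit $\gamma(P,Q)$ is not $\int v_{F,G}-\mathbb E\int v_{F,G}$ (those may be undefined) but the $L^2$-limit of the \emph{jointly centered} truncated integrals $\int_{-M}^{M}(v_{F,G}-\mathbb E v_{F,G})$, shown to be Cauchy via explicit covariance bounds. A standard $3\varepsilon$ argument then transfers the limit from $Z_{n,M,2}$ to $\mathcal T_1(P_n,Q)$. Your approach would be correct under the additional assumption \eqref{assumption:L21}, but to reach the stated generality you must replace the one-shot delta method by this truncation-plus-Efron--Stein scheme; otherwise the argument has a genuine gap exactly where the hypotheses are weakest.

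For $p>1$ the paper does not give a proof (it cites Theorem~2.1(ii) of \cite{CLTonedimensional}), so your H\'ajek-projection sketch cannot be compared against an in-paper argument; it is in the right spirit, but as written the passage from the pointwise integrand identity to a bona fide i.i.d.\ sum plus an Efron--Stein-controlled remainder is asserted rather than established. As a side remark, your formula for the directional derivative places the indicator $\mathbb I(F=G)$ on the absolute-value term, which is the correct expression and in fact reveals a typo in the paper's display \eqref{def_v_FG} (both indicators there read $\mathbb I(F\ne G)$, though the supplement's proof uses $\mathbb I(F=G)$).
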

}
The case $p>1$ in Theorem \ref{1dCLT} is Theorem 2.1 (ii) in \cite{CLTonedimensional}. { The case $p=1$ and $P=Q$ was considered in \cite{delBarrioGineMatran1999}, under the stronger assumption 
\begin{equation}\label{assumption:L21}
\int_{\R}\sqrt{F(x)(1-F(x))}dx<\infty,
\end{equation}
related to the interpolation space $L_{2,1}$: it holds if $P$ has a finite moment of order $2+\delta$ for some $\delta>0$; it implies finite moment of order 2.
Under \eqref{assumption:L21} the trajectories of the process $B\circ F$ belong a.s. to $L_1(\mathbb{R})$. In this case and assuming $P=Q$ we have that 
\begin{equation}\label{1dL21sameP}
\sqrt{n}\,\mathcal{T}_1(P_n,Q)\underset w \to \int_{\mathbb{R}}|B\circ F(x)| dx.
\end{equation}
This is part of Theorem 1.1 in \cite{delBarrioGineMatran1999}. Theorem 5.1 in the same reference covers the case $P=Q$ under the weaker assumption that $P$ has a finite variance. To our knowledge, the case $p=1$ at the level of generality included in Theorem \ref{1dCLT} is new, covering the case of $P(\ne Q)$ of finite variance, but not necessarily satisfying \eqref{assumption:L21}. A proof is given in the supplementary material, including the fact that {\eqref{def_G_PQ}} defines indeed an a.s. finite random variable.}

Theorem \ref{1dCLT}, while limited in scope, brings to the focus a few significant issues. First, the fluctuation of the empirical OT cost satisfies a CLT under very mild moments and smoothness assumptions: no need for bounded support or light tails (for the sake of brevity we refrain from pursuing the case of heavier tails, but the literature covers it in some cases, see \cite{delBarrioGineMatran1999,delBarrioGineUtzet2005}). 
We observe that in the case $p>1$, if $Q$ is Dirac's measure on a point then the CLT for $\mathcal{T}_p(P_n,Q)$ is simply the CLT for sums of i.i.d.\ random variables and a finite moment of order $2p$ is  then a necessary condition for the conclusion. The assumption about continuity of the quantile function rules out the possibility of a disconnected support: the support of $Q$ is connected if and only if $G^{-1}$ is continuous, see Proposition A.7 in \cite{BobkovLedoux2019}. The different regime of the  empirical transportation cost in this case has been observed in the literature for a long time (see \cite{SamworthJohnson2004, BobkovLedoux2019}). 

A further point of interest in Theorem \ref{1dCLT} is that non strictly convex cost functions (the case $p=1$) result in some differences with respect to the CLT. We see that non-Gaussian limiting distributions can show up. In fact, this will be always the case if $P=Q$. The limiting distribution will be the law of the centered version of the right-hand side in \eqref{1dL21sameP}. In particular, we see here a non-degenerate limiting distribution. In contrast, the conclusion of Theorem  \ref{1dCLT} for $p>1$ and $P=Q$ is simply 
\begin{equation}\label{vanishing_variance}
\sqrt{n}(\mathcal{T}_p(P_n,P)-\mathbb{E}[\mathcal{T}_p(P_n,P)])\underset {\text{\small Pr.}} \to 0.
\end{equation}

As a summary to this subsection we can say that CLTs for the empirical transportation cost on the real line, with a Gaussian limiting distribution, hold with great generality. That is, under minimal moment and smoothness 
conditions. However, these minimal assumptions come at the price of somewhat unnatural centering constants: the centering constant in \eqref{unbiased_CLT_1d} is not $\mathcal{T}_p(P,Q)$, as one would desire in some statistical applications. We deal with this issue next.

\subsection{The role of the centering constants}
Given the interest in Wasserstein metrics, one could look for, say, a confidence interval for $\mathcal{W}_p(P,Q)$ or for $\mathcal{T}_p(P,Q)$. 
If we could ensure that 
\begin{equation}\label{biased_CLT_1d}
\sqrt{n}(\mathcal{T}_p(P_n,Q)-\mathcal{T}_p(P,Q))\underset w \to N(0,\sigma_p^2(P,Q)),
\end{equation}
then, provided that $\hat{\sigma}_p^2(P,Q)$ is a consistent estimator of $\sigma_p^2(P,Q)$, we would have that
$$\Big[\mathcal{T}_p(P_n,Q)\pm \frac{\hat{\sigma}_p(P,Q)}{\sqrt{n}}\Phi^{-1}(1- {\textstyle \frac \alpha 2})\Big]$$
is an {approximate} confidence interval for $\mathcal{T}_p(P,Q)$ 
with asymptotic level $1-\alpha$. The key to  move from \eqref{unbiased_CLT_1d} to the desired version \eqref{biased_CLT_1d} would be to prove that
\begin{align}
\label{remove_bias_W2_one_sample}
    \sqrt{n} \left( \mathbb E\left(\mathcal{T}_p(P_n,Q)\right) - \mathcal{T}_p(P,Q)\right) \rightarrow 0.
\end{align} 
Theorem 2.3 in \cite{CLTonedimensional} provides sufficient conditions to replace this centering constant with $\mathcal{T}_p(P,Q)$. This result was recently improved by {Theorem 4.2} in \cite{sliced_arxiv}. We present here an adaptation of this result for the one-dimensional setup. The key assumption can be formulated in terms of the following quantities:
\begin{equation*}
J_p(P):=\int_0^1 \frac{(t(1-t))^{\frac{p}{2}}}{f^p(F^{-1}(t))}dt, \quad p\geq 1.
\end{equation*}
Here $F$ is the distribution function of $P$, which is assumed to have density $f$.  

\begin{theorem}\label{teor:bias_1d}
Assume $p>1$ and let $P,Q\in \mathcal P_p(\mathbb R)$ be such that $P$ has distribution function $F$ and density $f$ with $f(F^{-1}(t))$ positive and continuous in $(0,1)$, and monotone for $t$ sufficiently close to $0$ and $1$.  If there exist $\alpha,\beta>1 $ with $\frac{1}{\alpha}+\frac{1}{\beta}=1$ 
such that $J_{\alpha}(P) < \infty$ and  
{$Q$, if $p=2$, or $P,Q$, if $p\neq 2$, have finite moment of order $\beta\max(1,p-1)$, 
then \eqref{remove_bias_W2_one_sample} holds.}

If $p=1$ and $J_1(P)<\infty$ then 
$$\sqrt{n} \left( \mathbb E\left[\mathcal{T}_1(P_n,Q)\right] - \mathcal{T}_1(P,Q)\right) \rightarrow c,
$$
with $c\geq 0$ a finite constant. If $\ell(F=G)=0$, then $c=0$ and \eqref{remove_bias_W2_one_sample} holds.
\end{theorem}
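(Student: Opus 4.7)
The plan is to exploit the quantile representation \eqref{1dquantiles} to write
$$\mathbb E[\mathcal T_p(P_n,Q)] - \mathcal T_p(P,Q) = \int_0^1 \mathbb E\bigl[\phi(F_n^{-1}(t) - G^{-1}(t)) - \phi(F^{-1}(t) - G^{-1}(t))\bigr]\, dt$$
with $\phi(u) = |u|^p$, and, for $p>1$, to apply a second order Taylor expansion of $\phi$ around $D(t):=F^{-1}(t) - G^{-1}(t)$. Setting $\Delta_n(t):=F_n^{-1}(t) - F^{-1}(t)$, the bias splits into a linear piece $L_n = \int_0^1 \phi'(D(t))\,\mathbb E[\Delta_n(t)]\, dt$ and a quadratic remainder $Q_n$ involving $\phi''(\xi_n(t))\Delta_n(t)^2$ for some intermediate point $\xi_n(t)$; the task is then to show that $\sqrt n\, L_n$ and $\sqrt n\, Q_n$ both vanish.

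For $L_n$, I would invoke the Bahadur representation
$$\Delta_n(t) = -\frac{F_n(F^{-1}(t)) - t}{f(F^{-1}(t))} + R_n(t),$$
whose principal term is centered since $\mathbb E[F_n(F^{-1}(t))] = t$ by continuity of $F$, so the problem reduces to controlling $\sqrt n\, \mathbb E[R_n(t)]$; the monotonicity of $f(F^{-1}(\cdot))$ near $0$ and $1$ is precisely the hypothesis needed to do so uniformly enough. Hölder's inequality with conjugate exponents $\alpha,\beta$ then handles the integration in $t$: $J_{\alpha+\delta}(P)<\infty$ supplies the $L^{\alpha}$ control of $\sqrt n\, \Delta_n(t)$ (the slack $\delta$ absorbing the endpoint behavior), while the moment hypotheses (i)--(iii) on $P$ and $Q$, together with the changes of variable $t = F(x)$ and $t = G(y)$, ensure $\int_0^1 |\phi'(D(t))|^{\beta}\, dt < \infty$ since $|\phi'(u)| \lesssim |u|^{p-1}$.

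The quadratic term $Q_n$ is the main technical obstacle, since $\phi''(u) = p(p-1)|u|^{p-2}$ is singular at the origin when $1<p<2$ and grows at infinity when $p>2$. I would bound $|\xi_n(t)| \le |D(t)| + |\Delta_n(t)|$ and apply Hölder once more to split $\mathbb E[\phi''(\xi_n(t))\Delta_n(t)^2]$ into an $L^{\alpha}$-norm of $|\xi_n(t)|^{p-2}$, which is finite thanks to the moment assumptions (i)--(iii), and an $L^{\beta}$-norm of $\Delta_n(t)^2$, which is of order $n^{-1}$ by $J_{\alpha+\delta}(P)<\infty$. The extra $\delta$ provides the room required, after integration in $t$, for the whole contribution to be $o(n^{-1/2})$.

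The case $p=1$ uses instead \eqref{1dempirical}:
$$\mathbb E[\mathcal T_1(P_n,Q)] - \mathcal T_1(P,Q) = \int_{\mathbb R}\bigl(\mathbb E|F_n(x) - G(x)| - |F(x) - G(x)|\bigr)\, dx.$$
At any $x$ with $F(x)\ne G(x)$ the sign of $F_n(x)-G(x)$ eventually agrees with that of $F(x)-G(x)$, and the identity $\mathbb E[F_n(x)] = F(x)$ forces the integrand to be $o(n^{-1/2})$ pointwise; $J_1(P)<\infty$ provides the integrable dominant needed for dominated convergence. On $\{F=G\}$, however, the CLT yields $\sqrt n\, \mathbb E|F_n(x)-F(x)| \to \sqrt{2/\pi}\, \sqrt{F(x)(1-F(x))}$, producing the non-negative constant $c = \sqrt{2/\pi}\int_{\{F=G\}}\sqrt{F(x)(1-F(x))}\, dx$. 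This constant vanishes exactly when $\ell(F=G) = 0$, in which case \eqref{remove_bias_W2_one_sample} holds.
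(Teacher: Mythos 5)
For the case $p>1$ the paper offers no proof at all and simply cites an external reference, so your Taylor-plus-Bahadur sketch is a genuine attempt at the only missing argument; unfortunately it has two real gaps. First, the second-order Taylor expansion with Lagrange remainder, $\phi(D+\Delta)=\phi(D)+\phi'(D)\Delta+\tfrac12\phi''(\xi)\Delta^2$ with $\phi(u)=|u|^p$, requires $\phi\in C^2$ on the whole interval $[D(t),D(t)+\Delta_n(t)]$; for $1<p<2$ the second derivative $\phi''(u)=p(p-1)|u|^{p-2}$ is non-integrable at the origin, so whenever $\Delta_n(t)$ is large enough to straddle zero (which can happen with non-negligible probability on a set of $t$ of positive measure, notably when $D(t)$ is near zero), the expansion is simply unavailable; a correct argument must replace it by the first-order mean-value form $[\phi'(\xi)-\phi'(D)]\Delta$ together with $(p-1)$-H\"older continuity of $\phi'$. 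Second, your reduction to ``controlling $\sqrt n\,\mathbb E[R_n(t)]$'' and the assertion that the tail monotonicity of $f\circ F^{-1}$ is ``precisely the hypothesis needed'' is a statement of what has to be shown, not a proof: the Bahadur remainder's expectation near $t=0,1$ is exactly where the $J_{\alpha+\delta}<\infty$ hypothesis and the $\alpha,\beta$ H\"older split have to earn their keep, and without an explicit bound integrable in $t$ the conclusion $\sqrt n\,L_n\to0$ does not follow.

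Your $p=1$ argument, by contrast, is correct and actually more elementary than the paper's. The paper obtains the constant $c$ by first showing $\sqrt n\,(\mathcal T_1(P_n,Q)-\mathcal T_1(P,Q))\Rightarrow\int v_{F,G}$ in distribution with convergence of $L^r$ moments ($r<2$), then reading off $c=\int_{\{F=G\}}\mathbb E|B\circ F|\,dx$. You instead compute the pointwise bias of the integrand directly: for $F(x)\neq G(x)$ the bias is $2\mathbb E[(G(x)-F_n(x))^+]$ (or its mirror), which decays exponentially in $n$ by Chernoff, while $\sqrt n\,\mathbb E\bigl||F_n-G|-|F-G|\bigr|\le\sqrt{F(1-F)}$ furnishes the dominating function and $J_1(P)<\infty$ is exactly $\int\sqrt{F(1-F)}\,dx<\infty$; on $\{F=G\}$ the $L^1$-CLT for binomial proportions gives $\sqrt n\,\mathbb E|F_n-F|\to\sqrt{2/\pi}\sqrt{F(1-F)}$, and your constant $c=\sqrt{2/\pi}\int_{\{F=G\}}\sqrt{F(1-F)}\,dx$ coincides with the paper's $\int_{\{F=G\}}\mathbb E|B(F(x))|\,dx$. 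This self-contained route avoids invoking weak convergence of the empirical process in $L_1(\mathbb R)$ and uniform integrability, which is a small but genuine simplification.
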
 

Details of the proof are given in the supplementary material. As a consequence of Theorem~\ref{teor:bias_1d} we see that we can indeed replace $\mathbb{E}[\mathcal{T}_p(P_n,Q)]$ with $\mathcal{T}_p(P,Q)$ in \eqref{unbiased_CLT_1d} or \eqref{unbiased_CLT_1d_p1} under some additional assumptions. We should recall at this point that finiteness of $J_p(P)$ is necessary and sufficient for boundedness of $n^{\frac{p}{2}}\mathbb{E}[\mathcal{T}_p(P_n,P)]$ and also for boundedness of 
$\sqrt{n}\mathbb{E}\left[\left(\mathcal{T}_p(P_n,P)\right)^{\frac{1}{p}} \right]$ (this is Corollary 5.10 in \cite{BobkovLedoux2019}). Hence, for $p>1$ we see that if $J_p(P)<\infty$ then
$$ \sqrt{n} \left( {\mathbb E}\left[\mathcal{T}_p(P_n,P)\right] - \mathcal{T}_p(P,P)\right) \rightarrow 0.$$
Theorem~\ref{teor:bias_1d} improves upon this known result dealing with the case $P\ne Q$.

\subsection{The null case}

As noted above, in the case $P=Q$ the limiting variance in \eqref{unbiased_CLT_1d} vanishes. Under the additional assumption $J_p(P)<\infty$ we obtain (for $p>1$) that
$$\sqrt{n}\mathcal{T}_p(P_n,P)\underset {\text{\small Pr.}} \to 0.$$
One may wonder if it is possible to get a non-degenerate limiting distribution with a different rate. Combining different approaches (see Theorem 6.4.1 in \cite{CsorgoHorvath1993} and Theorem 3.2 in \cite{SamworthJohnson2004}; { we refer also to \cite{10.1214/20-AOS2036} and the references therein for related results}) the following result follows.
\begin{theorem}\label{1dCLT_nullcase}
Assume $P$ has a continuous density $f$ such that $f\circ F^{-1}$ is continuous in $(0,1)$ and monotonic in a neighbourhood of $0$ and $1$. If for some  $p\geq 1$, \begin{equation}\label{J_p_assumption_null}
J_p(P):=\int_0^1 \frac{(t(1-t))^{\frac{p}{2}}}{f^p(F^{-1}(t))}dt<\infty.
\end{equation}
then 
\begin{equation*}\label{null_case_1d}
n^{\frac{p}{2}}\mathcal{T}_p(P_n,P)\underset w \to \int_0^1 \frac{|B(t)|^{p}}{f^p(F^{-1}(t))}dt,
\end{equation*}
where $B$ is a standard Brownian {bridge} on $[0,1]$.
\end{theorem}
Again, we defer details of the proof to the supplementary material. Finiteness of $J_p(P)$ is a necessary and sufficient condition for a.s. integrability of the process in the limiting integral. It is also related to integrability of some functionals of the hazard function. This result does not bring any improvement over \eqref{1dL21sameP} in the case $p=1$. { For $p>1$ it can be shown that \eqref{J_p_assumption_null} requires that $P$ is supported in an interval and $f$ should be a.e. positive on it (see Section 5.2 in \cite{BobkovLedoux2019}). It also 
implies finiteness of the moment generating function in a neighborhood of the origin (see Section~3 in \cite{SamworthJohnson2004}). Hence, this is a much stronger assumption than finiteness of moments of order $2p$ in Theorem~\ref{1dCLT}. In simpler terms, $J_p(P)$ is large when $P$ has nearly disconnected support or has sufficiently heavy tails.} Under some relaxations of \eqref{J_p_assumption_null} it is still possible to get a version of {Theorem \ref{1dCLT_nullcase}}, although with the addition of some centering terms. We refer to \cite{delBarrioGineUtzet2005} for details.

\subsection{Limits for optimal transportation plans and maps.}

When $p>1$ and one of the {marginals} admits a density (this can be relaxed, see Theorem 2.44 in \cite{villani}) the OT plan is given by a map, that is, is of type $\pi=({\rm I}_d,T)\sharp P$. This general result has also a particularly simple formulation in the one-dimensional case, since the joint probability $(F^{-1},G^{-1})\sharp \ell_{(0,1)}$, {where $\ell_{(0,1)}$ denotes uniform Lebesgue measure on $[0,1]$,} is an optimal plan for all the $c_p$ cost functions, $p\geq 1$. We recall that $F^{-1}$ is also the unique optimal transportation map from {$\ell_{(0,1)}$} to $P$. Hence, distributional limit theory for optimal plans and maps can, in this case, be formulated in terms of the natural estimator of the quantile function, namely, the empirical quantile function, $F_n^{-1}$. This is a left-continuous, piecewise constant map, with $F_n^{-1}(t)=X_{(i)}$ for $\frac{i-1}n<t\leq \frac i n$, $i=1,\ldots,n$, where $X_{(i)}$ denotes the $i$-th order statistic associated to the sample $X_1,\ldots,X_n$ of i.i.d.\ observations from $P$. Now, the natural object to look at is the \textit{quantile process}
$$v_n(t):=\sqrt{n}(F_n^{-1}(t)-F^{-1}(t)), \quad 0<t<1.$$
It is well known that $v_n(t)$ is asymptotically Gaussian if $P$ has a density such that $f(F^{-1}(t))>0$. The natural setup to look for a functional CLT for $v_n$ is, in turn, $L_p(0,1)$: if we prove that $v_n\underset w \to \mathbb{V}$ for some $L_p(0,1)$-valued random element then we would get, by the continuous mapping theorem that $n^{\frac{p}{2}}\mathcal{T}_p(P_n,P)=\|v_n\|_{L_p}^p\underset w \to \|\mathbb{V}\|_{L_p}^p$, recovering the conclusion of Theorem \ref{null_case_1d}. This can be made precise. For completeness we include next a formal statement of this fact.

\begin{theorem}\label{CLT_quantile_process}
Assume $P$ has a continuous density $f$ such that $f\circ F^{-1}$ is continuous in $(0,1)$ and monotonic in a neighbourhood of $0$ and $1$. Assume also that $J_p(P)<\infty$ for some  $p\geq 1$. Set
$$\mathbb{V}(t)=\frac{B(t)}{f(F^{-1}(t))},\quad 0<t<1,$$
with $B$ a Brownian bridge on $(0,1)$. Then, with probability one, the trajectories of $\mathbb{V}$ belong to $L_p(0,1)$. Furthermore,
$v_n\underset{w}{\to} \mathbb{V}$
as random elements in $L_p(0,1)$.
\end{theorem}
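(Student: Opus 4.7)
The plan is to handle the two claims in order, combining finite-dimensional convergence with tightness in $L_p(0,1)$.

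For the integrability claim, I would apply Fubini's theorem: since $\E|B(t)|^p = C_p (t(1-t))^{p/2}$ for some absolute constant $C_p$, we obtain
\begin{equation*}
\E\int_0^1 \frac{|B(t)|^p}{f^p(F^{-1}(t))}\,dt \; =\; C_p\, J_p(P) \;<\;\infty,
\end{equation*}
so $\|\mathbb{V}\|_{L_p}^p$ is finite a.s., i.e.\ $\mathbb{V}\in L_p(0,1)$ almost surely.

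For the weak convergence, the natural starting point is the quantile transform $X_i = F^{-1}(U_i)$ with $U_i$ i.i.d.\ uniform on $(0,1)$. Denoting by $G_n$ the empirical d.f.\ of the $U_i$ and by $\beta_n(t) := \sqrt n\, (G_n^{-1}(t)-t)$ the uniform quantile process, we have
\begin{equation*}
v_n(t) \;=\; \sqrt{n}\,\bigl(F^{-1}(G_n^{-1}(t)) - F^{-1}(t)\bigr),
\end{equation*}
and under the continuity of $f\circ F^{-1}$ a mean-value argument gives the heuristic $v_n(t) \approx \beta_n(t)/f(F^{-1}(t))$. The classical weak convergence of the finite-dimensional distributions of $\beta_n$ to those of a Brownian bridge $B$ then yields, via an application of Slutsky combined with the smoothness of $f\circ F^{-1}$ at the points involved, the finite-dimensional convergence of $v_n$ to $\mathbb{V}$.

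Tightness in $L_p(0,1)$ is the substantial step. I would split $(0,1)$ into a compact $[\delta,1-\delta]$ and the tail $I_\delta = (0,\delta)\cup(1-\delta,1)$. On $[\delta,1-\delta]$, $f\circ F^{-1}$ is bounded below by a positive constant and strong approximation of $\beta_n$ by Brownian bridges provides uniform control sufficient for tightness of $v_n$ in $L_p(\delta,1-\delta)$. For the tails, the key ingredient is the uniform moment bound
\begin{equation*}
\E|v_n(t)|^p \;\leq\; C_p\, \frac{(t(1-t))^{p/2}}{f^p(F^{-1}(t))}, \qquad n\geq 1,\ t\in (0,1),
\end{equation*}
which together with Fubini yields $\E\int_{I_\delta} |v_n(t)|^p\, dt \to 0$ as $\delta\downarrow 0$ uniformly in $n$, thanks to $J_p(P)<\infty$. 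Combined with finite-dimensional convergence, this gives weak convergence $v_n \underset{w}{\longrightarrow} \mathbb{V}$ in $L_p(0,1)$, and by the continuous mapping theorem applied to $u \mapsto \|u\|_{L_p}^p$ one recovers Theorem \ref{1dCLT_nullcase} as a byproduct.

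The main obstacle is the uniform moment bound for $\E|v_n(t)|^p$. Its proof hinges on the monotonicity assumption for $f\circ F^{-1}$ near $0$ and $1$: together with classical moment estimates $\E|\beta_n(t)|^p \leq C (t(1-t))^{p/2}$ for the uniform quantile process, monotonicity allows one to replace $1/f(F^{-1}(s))$ for $s$ between $t$ and $G_n^{-1}(t)$ by $1/f(F^{-1}(t))$ at the cost of a universal constant, thereby transferring the bound on $|G_n^{-1}(t)-t|$ into a bound on $|F^{-1}(G_n^{-1}(t))-F^{-1}(t)|$. Without such monotonicity the ratio $1/f(F^{-1}(s))$ could oscillate uncontrollably in the relevant random interval, and no comparable pointwise bound would be available.
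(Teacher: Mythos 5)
Your treatment of the integrability claim is correct, and the overall blueprint (integrability via Fubini, fidi convergence plus tightness split into a compact core and shrinking tails) is reasonable and broadly parallel to the paper's own strong-approximation-plus-tails strategy. The genuine gap is in the tail step. You lean on the pointwise uniform moment bound
\begin{equation*}
\E\,|v_n(t)|^p \;\lesssim\; \frac{(t(1-t))^{p/2}}{f^p(F^{-1}(t))}, \qquad n\ge 1,\ t\in(0,1),
\end{equation*}
and justify it by saying monotonicity lets one replace $1/f(F^{-1}(s))$, for $s$ between $t$ and $G_n^{-1}(t)$, by $1/f(F^{-1}(t))$ up to a constant. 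That replacement goes the wrong way precisely where it matters. Suppose $f\circ F^{-1}$ is decreasing near $1$ (the generic case for a density vanishing at the right endpoint). For $t$ close to $1$ and on the event $G_n^{-1}(t)>t$ — which for $t\in(1-1/n,1)$ has probability $1-t^n\ge 1-(1-1/n)^n$, bounded below by roughly $1-e^{-1}$ — the mean-value point $\xi$ lies in $(t,G_n^{-1}(t))$, so $f(F^{-1}(\xi))\le f(F^{-1}(t))$ and hence $1/f(F^{-1}(\xi))\ge 1/f(F^{-1}(t))$, i.e.\ a \emph{lower} bound rather than an upper bound. So the proposed pointwise control is not established; in fact, for $t\in(1-1/n,1)$ one has $F_n^{-1}(t)=X_{(n)}$ fixed while $F^{-1}(t)$ races to the right endpoint, and it is the integral $\int_{1-1/n}^1|X_{(n)}-F^{-1}(t)|^p\,dt$ (not any pointwise bound) that one must control.

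The paper circumvents this entirely by never asking for a pointwise-in-$t$ estimate. It applies a strong approximation theorem to handle $[1/n,1-1/n]$ (giving convergence to zero of $\int_{1/n}^{1-1/n}|v_n-\mathbb{V}_n|^p$ in probability for suitable versions), and then reduces the tail contribution to showing $n^{p/2}\int_{1-1/n}^1|X_{(n)}-F^{-1}(t)|^p\,dt \to 0$ in probability (and symmetrically near $0$). That step is done with \emph{integrated} estimates: a variance/median bound for extremes (Proposition B.8 in Bobkov--Ledoux) and a Hardy-type inequality that converts $\int(x-t)^p f(x)\,dx$ into $\int\big((1-F)/f\big)^p f$, after which $J_p(P)<\infty$ and dominated convergence finish the job. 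If you want to keep your split-and-moment-bound structure, you should replace the unproved pointwise bound by a bound on $\E\int_{I_\delta}|v_n(t)|^p\,dt$ proved by these integrated techniques; as currently written, the crucial tail tightness does not go through.
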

We include a schematic proof of this result in the supplementary material. We should note that the sufficient conditions here are not far from being necessary. In fact, in the recent paper \cite{beare2025necessarysufficientconditionsconvergence}, it is shown that, in the case $p=1$, weak convergence of $v_n$ in $L_1(0,1)$ holds if and only if $F^{-1}$ is absolutely continuous and  \eqref{assumption:L21} holds. Absolute continuity of $F^{-1}$ holds, in turn, if and only if $P$ is supported on an interval and the absolutely continuous component of $P$ has an a.e. positive density on that interval (this is Proposition A.17 in \cite{BobkovLedoux2019}). In the case $p=1$ it is of interest to observe that \eqref{assumption:L21} is the necessary and sufficient condition for weak convergence of $\sqrt{n}(F_n-F)$ as a random element in $L_1(\mathbb{R})$. Since $\sqrt{n}\mathcal{T}_1(P_n,P)=\|\sqrt{n}(F_n-F)\|_{L_1(\mathbb{R})}=\|v_n\|_{L_1(0,1)}
$, we see that weak convergence of $\sqrt{n}\mathcal{T}_1(P_n,P)$ holds in some cases in which $v_n$ fails to converge. 

{
\begin{remark}\label{remark_OT_map_1D} While looking at the quantile process leads, technically, to sharper limit theorems, the OT map is, arguably, the object of interest in applications. Furthermore, Theorem \ref{CLT_quantile_process} can be easily adapted to obtain distributional limit theorems for estimators of the OT map. The simplest adaptation would be to look at the optimal map from $Q$ to $P$, namely, $T=F^{-1}\circ G$, where we are assuming that $G$ is continuous. If we define $T_n:=F_n^{-1}\circ G$ then 
a trivial consequence of Theorem \ref{CLT_quantile_process} is that, under the same assumptions, the sequence $\sqrt{n}(T_n-T)$ converges
weakly to the centered Gaussian process
$$\mathbb{V}_G(x)=\frac{B(G(x))}{f(T(x))}, \quad x\in\mathbb{R},$$
as random elements in $L_p(\mathbb{R},Q)$. Slightly more elaborate arguments can be used to handle two-sample problems. CLTs in other metric spaces, which could be more appropriate in some applications, often require stronger assumptions. As a recent example we can cite \cite{Ponnopratetal2024}, which gives a uniform confidence band for $T$ in a compact interval $[a,b]\subset \mathbb{R}$, under additional smoothness assumptions on $F$ and $G$.
\end{remark}
}

\section{Limit theorems for general dimensions}\label{general_dimension}
Distributional limit theorems in the case $d>1$ are much more recent. We present now a {succinct} account of the main results. Remarkably, the CLTs for the fluctuations remain valid with very small changes. However, the natural centering constants cannot be used{,} apart from some particular cases. Given the interest of using the natural centering for inferential goals, we pay special attention to those cases. For a cleaner exposition we start dealing with the fluctuations, even though some of the CLTs that we present later were published earlier.

\subsection{CLTs for the fluctuation}

The following version of Theorem \ref{1dCLT} remains valid in general dimension:

\begin{theorem}\label{general1dCLT}
Assume $P,Q$ are probabilities on $\mathbb{R}^d$ with finite moments of order $2p$. Assume further that $P$ has a density and connected support with a negligible boundary. Then 
\begin{equation}\label{unbiased_CLT_general_d}
\sqrt{n}\left(\mathcal{T}_p(P_n,Q)-\mathbb{E}[\mathcal{T}_p(P_n,Q)]\right)\underset w \to N(0,\sigma_p^2(P,Q)),
\end{equation}
where 
\begin{eqnarray}
    \label{eq:Limitng-variance-multivariate}
    \sigma_p^2(P,Q)=\int f_0^2 dP-\left(\int f_0 dP\right)^2<\infty,
\end{eqnarray}
and $f_0$ is an OT potential from $P$ to $Q$ {for the cost $c(x,y)=\|x-y\|^p$}.
\end{theorem}

This is Corollary 4.7 in \cite{delBarrio.GonzalezSanz.Loubes.AIHP.2024.CLTgeneral}. Some comments are in order at this point. The assumptions in Theorem \ref{general1dCLT} guarantee that the OT potential $f_0$ is uniquely determined, up to the addition of a constant. Since $\sigma^2(P,Q)$ is the variance of the potential, this quantity does not depend on the particular choice of $f_0$ and $\sigma^2(P,Q)$ is well defined. The proof of the result includes the fact that $f_0\in L_2(P)$ if $P$ and $Q$ have finite moments of order $2p$. The apparently simpler description of the limiting variance in this result is explained by the stronger assumptions here. If $P$ is assumed to have a density in Theorem \ref{1dCLT} then $G^{-1}\circ F$ is the optimal map from $P$ to $Q$, the OT potential, $f_0$, is any primitive of the map $x\mapsto h_p'(x-G^{-1}(F(x)))$, with $h_p(x)=|x|^p$ and we see that the limiting variances in both results are equal. Finally, we would like to briefly sketch the path to prove \eqref{unbiased_CLT_general_d} through the \textit{Efron-Stein} linearization.  
Under sufficient integrability the (one-dimensional) CLT ensures that 
$$\sqrt{n}\left(\int f_0dP_n-\int f_0dP\right)\underset w \to N(0,\sigma_p^2(P,Q)).$$
We can use the Efron-Stein inequality to see that
\begin{multline*}
    n{\rm Var}\left[\mathcal{T}_p(P_n,Q)-\int f_0dP_n\right]\\
    \leq\, c \mathbb{E}\left[(f_n(X_1)-f_0(X_1))^2 \right],
\end{multline*}
for some constant $c>0$, where $f_n$ is an OT potential for the empirical problem --- an OT potential from  $P_n$ to $Q$. With a careful choice of centering constants the empirical OT potentials, $f_n(X_1)$, converge to $f_0(X_1)$ a.s.~and in $L_2$ (Theorem 2.10 in \cite{delBarrio.Loubes.2019.AoP.CLTquadratic}, Corollary 3.5 in \cite{delBarrio.GonzalezSanz.Loubes.AIHP.2024.CLTgeneral})  and we can conclude that the last upper bound vanishes as $n\to\infty$. But then \eqref{unbiased_CLT_general_d} follows. This scheme works provided $P$ and $Q$ have bounded support and in this case we also have convergence of variances
$$ n{\rm Var}\left[\mathcal{T}_p(P_n,Q)\right]\to \sigma_p^2(P,Q).$$
The moment assumption can be relaxed to finite moment of order $2p$, at the price of needing a more involved linearization technique and losing, possibly, the convergence of variances. We refer to \cite{delBarrio.GonzalezSanz.Loubes.AIHP.2024.CLTgeneral} for details.

\subsection{Centering constants in dimension $d>1$}
As in the one-dimensional case, it would be of interest to obtain conditions ensuring that the bias converges with  parametric rate towards zero as in \eqref{remove_bias_W2_one_sample}. {For $d \leq 3$, Theorem~\ref{theo:curse} allows us to replace $\mathbb{E}[\mathcal{T}_2(P_n, Q)]$ by the population cost $\mathcal{T}_2(P, Q)$ and conclude
\[
\sqrt{n} \big( \mathcal{T}_2(P_n, Q) - \mathcal{T}_2(P, Q) \big) \overset{w}{\longrightarrow} N(0, \sigma_2^2(P, Q)).
\]
This sort of result was considered by \cite{GonzlezDelgado.et.al.2023.EJS} for nonparametric goodness-of-fit testing on the flat torus. However, under the null hypothesis, the limit is degenerate; the correct convergence rate is faster (see again Theorem~\ref{theo:curse}). A more powerful test statistic could be constructed if a non-degenerate limiting distribution of $r_n \mathcal{T}_2(P_n, P)$, where $r_n$ converges to infinity slower than $\sqrt{n}$, is found (see \eqref{ledoux} below). 
}
For $d\geq 4$, in contrast to the one-dimensional case, integrability and smoothness assumptions do not suffice in all cases.  In fact, under the favorable situation in which $P$ and $Q$ are compactly supported on disjoint convex sets, the best possible general rate is that
$$\big|\E\left[\mathcal{T}_p(P_n,Q)\right] - \mathcal{T}_p(P,Q)\big|\leq c n^{-2/d},$$
see Corollary~3 in \cite{manole.weed.2024.AoAP}. This means that we cannot generally expect \eqref{remove_bias_W2_one_sample} to hold if $d\geq 4$.  We refer to \cite{niles2022estimation} for further details on the minimax estimation rates for $\mathcal{T}_p(P,Q)$. The main consequence is that estimation of $\mathcal{T}_p(P,Q)$ is affected by the curse of dimensionality and it is of interest to investigate particular setups in which parametric estimation is possible and to develop useful distributional limit theory in those setups. This is what we present next.

\subsection{Discrete spaces}

Discrete probabilities are a rich enough setup for many useful applications and an interesting benchmark to test alternative approaches to distributional limit theorems in OT. Let us assume that $P$ and $Q$ are supported on the finite sets $\mathbf{X}=\{x_1,\ldots,x_N\mathbb\}\subset \mathbb{R}^d$,
$\mathbf{Y}=\{y_1,\ldots,y_M\mathbb\}\subset \mathbb{R}^d$. Then $P$ is characterized by the probability $p_i=P(\{x_i\})$ given to each of the atoms of $\mathbf{X}$, i.e., $P=\sum_{i=1}^N p_i\delta_{x_i}$. Therefore, as the points are fixed,  the only variable is the element ${\bf p}=(p_1, \dots, p_N)$ of the $N$-dimensional simplex.
As a consequence, OT in discrete spaces is the finite-dimensional  linear program
\begin{equation}\label{simplex}
\min_{\pi\in \Gamma({\bf p},{\bf q})} \langle C, \pi\rangle_{{\rm F}},
\end{equation}
where \( C = (c(x_i,y_j)) \in \mathbb{R}^{N \times M} \) denotes the cost matrix and 
$$  \Gamma({\bf p},{\bf q})=\bigg\{\pi=(\pi_{ij})\in \R^{N\times M}:\ \sum_{j=1}^M \pi_{ij} = p_i, \, \sum_{i=1}^N \pi_{ij} = q_j \quad \pi_{ij} \geq 0\bigg\} $$
the transport polytope. The empirical measures  are defined by random elements $\hat{{\bf p}}=(\hat{p}_1, \dots, \hat{p}_N)$ on the simplex. Of course, there is no particular role of the ambient space $\mathbb{R}^d$ on the problem, but we try to keep a homogeneous notation. The multivariate central limit theorem shows that 
$\sqrt{n}(\hat{{\bf p}}-{{\bf p}})$
is asymptotically Gaussian with variance $\Sigma(\mathbf{p})$ defined as
\begin{equation}\label{sigma_Natrix}
   \begin{bmatrix}
p_1(1-p_1) & -p_1p_2  & \cdots& -p_1p_N\\
-p_2p_1 & p_2(1-p_2) & \cdots&-p_2p_N\\
\vdots & \vdots & \ddots&\vdots\\
-p_Np_1 &  -p_Np_2&\cdots&p_N(1-p_N)
\end{bmatrix}.
\end{equation}
Hence, in the discrete case, one can set $\mathcal{T}(\mathbf{p}):=\mathcal{T}_p(P,Q)$ and investigate  differentiability results for $\mathcal{T}$. The key result is that $\mathcal{T}$ is directionally Hadamard differentiable, with derivative
$$\mathbf{h}\mapsto \max_{\mathbf{u}\in\Phi^*_p(\mathbf{p},\mathbf{q})}\big(-\mathbf{u}\cdot\mathbf{h} \big),$$
where $\Phi^*_p(\mathbf{p},\mathbf{q})$ is the set of points $\mathbf{u}\in\mathbb{R}^N$ for which there exists $\mathbf{v}\in\mathbb{R}^M$ such that $\mathbf{p}\cdot \mathbf{u}+{\mathbf{q}}\cdot \mathbf{v}=\mathcal{T}_p(P,Q)$ and $u_i+v_j\leq \|x_i-y_j\|^p$, $1\leq i\leq N$, $1\leq j\leq M$ (these are the sets of optimal solutions to the dual of the linear program \eqref{simplex} when $c(x,y)=\|x-y\|^p$). Hence, the delta method shows the following. 
\begin{theorem}\label{CLT_discrete}
If $P$ and $Q$ are finitely supported probabilities on $\mathbb{R}^d$ then
\begin{equation}\label{discrete_CLT}
\sqrt{n}\big(\mathcal{T}_p(P_n,Q)-\mathcal{T}_p(P,Q)\big)\underset w \to \max_{\mathbf{u}\in\Phi^*_p(\mathbf{p},\mathbf{q})} \mathbf{G}\cdot \mathbf{u}, 
\end{equation}
where $\mathbf{G}$ is a centered Gaussian r.v. with covariance matrix $\Sigma(\mathbf{p})$ as in \eqref{sigma_Natrix}.
\end{theorem}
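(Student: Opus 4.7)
The plan is to combine the classical multivariate CLT for the empirical frequencies with a functional delta method based on the Hadamard directional differentiability announced in the discussion preceding the theorem. Reparametrize the empirical cost as $\mathcal{T}_p(P_n,Q)=\mathcal{T}(\hat{\mathbf{p}})$, where $\mathcal{T}:\Delta^{N-1}\to\mathbb{R}$ is the map $\mathbf{p}\mapsto \mathcal{T}_p\bigl(\sum_{i=1}^N p_i\delta_{x_i},Q\bigr)$. Each observation $X_j\sim P$ is encoded by the indicator vector $(\mathbb{I}\{X_j=x_i\})_{i=1}^N$, with mean $\mathbf{p}$ and covariance precisely $\Sigma(\mathbf{p})$ of \eqref{sigma_Natrix}; the vector CLT then yields $\sqrt{n}(\hat{\mathbf{p}}-\mathbf{p})\underset{w}{\longrightarrow}\mathbf{G}\sim N(0,\Sigma(\mathbf{p}))$. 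A key observation is that $\Sigma(\mathbf{p})\mathbf{1}=0$ (every row of $\Sigma(\mathbf{p})$ sums to zero), so $\langle\mathbf{G},\mathbf{1}\rangle=0$ almost surely.

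Next, I would invoke the Hadamard directional differentiability of $\mathcal{T}$ at $\mathbf{p}$ (tangentially to the simplex) that is stated before the theorem, whose derivative is $\mathbf{h}\mapsto\max_{\mathbf{u}\in\Phi^*_p(\mathbf{p},\mathbf{q})}\langle\mathbf{u},\mathbf{h}\rangle$. This derivative is a legitimate functional even though the dual optimizer $\mathbf{u}$ is only unique modulo the translation $(\mathbf{u},\mathbf{v})\mapsto(\mathbf{u}+c\mathbf{1},\mathbf{v}-c\mathbf{1})$: it is evaluated on tangent directions $\mathbf{h}$ satisfying $\sum_i h_i=0$, for which $\langle\mathbf{u}+c\mathbf{1},\mathbf{h}\rangle=\langle\mathbf{u},\mathbf{h}\rangle$. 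Since the increments $\hat{\mathbf{p}}-\mathbf{p}$ lie in this tangent space, and by Step~1 so does the limit $\mathbf{G}$ almost surely, the functional $\mathbf{u}\mapsto\langle\mathbf{G},\mathbf{u}\rangle$ is unambiguously defined on each equivalence class of dual optima.

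Finally, applying the delta method for Hadamard directionally differentiable functionals (as exploited in \cite{SommerfeldMunk2018}, building on the classical results of Shapiro and Römisch), Steps~1 and~2 combine to give
$$\sqrt{n}\bigl(\mathcal{T}(\hat{\mathbf{p}})-\mathcal{T}(\mathbf{p})\bigr)\underset{w}{\longrightarrow}\max_{\mathbf{u}\in\Phi^*_p(\mathbf{p},\mathbf{q})}\langle\mathbf{G},\mathbf{u}\rangle,$$
which is the assertion \eqref{discrete_CLT}. The chief technical hurdle lies in the LP sensitivity argument underpinning the Hadamard directional differentiability: the upper bound $\limsup_n t_n^{-1}\bigl(\mathcal{T}(\mathbf{p}+t_n\mathbf{h}_n)-\mathcal{T}(\mathbf{p})\bigr)\leq \max_{\mathbf{u}\in\Phi^*_p}\langle\mathbf{u},\mathbf{h}\rangle$ is a one-line duality estimate obtained by evaluating the dual at any fixed optimizer, but the matching lower bound requires extracting a convergent subsequence of dual optimizers at the perturbed problem. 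This in turn rests on a boundedness-modulo-translation property of the dual polytope, conveniently handled by normalizing (e.g.) $u_1=0$ and exploiting the fact that the dual feasible set $\{(\mathbf{u},\mathbf{v}):u_i+v_j\leq \|x_i-y_j\|^p\}$ is a fixed polyhedron with finitely many basic feasible solutions, independent of~$\mathbf{p}$.
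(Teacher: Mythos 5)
Your proposal follows exactly the same route the paper sketches: the multivariate CLT for $\hat{\mathbf{p}}$ with covariance $\Sigma(\mathbf{p})$, the directional Hadamard differentiability of $\mathcal{T}$ on the simplex with derivative $\mathbf{h}\mapsto\max_{\mathbf{u}\in\Phi^*_p}\langle\mathbf{u},\mathbf{h}\rangle$, and the delta method for nonlinear derivatives, which is precisely how the paper (deferring to Theorem~3 of \cite{SommerfeldMunk2018}) obtains the result. The extra remarks about $\Sigma(\mathbf{p})\mathbf{1}=0$ and the normalization-invariance of the dual optimizer are correct and clarify why the limit is well defined, but they do not constitute a departure from the paper's argument.
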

Theorem \ref{CLT_discrete} is a simplified version of Theorem 3 in \cite{SommerfeldMunk2018}. The case $P=Q$ admits a slightly cleaner expression for the limit, generalizing earlier related work \cite{SamworthJohnson2004}. We see that normality of the limit holds if the maximizer in the dual problem is unique, but, obviously, this is not directly related to the smoothness of the cost. A similar result holds when the supports of $P$ and $Q$ are countable, under the additional assumption 
$$\sum_{i=1}^{\infty} \|x_i\|^p \sqrt{p_i}<\infty,$$
see \cite{TamelingSommerfeldMunk2019} for details. 

A central limit theorem  for the OT plans in discrete spaces is derived in \cite{Klatt.et.al.2022.AoOR}, where  the corresponding limit distribution is usually non-Gaussian and is  determined from
the way the empirical plan is chosen. The specifics of the statement are highly technical and extend beyond the scope of this discussion. For a comprehensive treatment and the formal statement, we direct the reader to \cite[Theorem~6.1]{Klatt.et.al.2022.AoOR}. { Recent work (see \cite{liu.2025entropicregularizationdebiasedgaussian}) has established estimators for the discrete OT plan that converge to a Gaussian limiting distribution. This result is achieved using OT plans derived from regularization with non-entropic penalizations and letting the regularization parameter tend to zero.}

\subsection{Lower-complexity adaptation principle}
\label{sec:lower_adapt}
Between the dual and primal formulation of OT we have the semidual formulation 
\begin{align}\label{eq:Semidual}
\mathcal{T}_c(P,Q)={\max_{f \in \mathcal F}}\int f({x})dP(x)+\int f^{c}(  {y}) dQ(  {y}),
\end{align}
where {$\mathcal F$ is a suitable class of functions, and}
$$ f^{c} = \inf_{x\in \R^d} \{ c(x,y)-f(x)\}$$
denotes the $c$-conjugate of $f$. 

{Then, from the bound 
\begin{equation}
\label{eq:semidual_emprical_process}
\left|\mathcal T_c(P_n,Q)-\mathcal T_c(P,Q)\right|
 \leq
\sup_{f\in \mathcal F}\left|(P_n-P)f\right|, 
\end{equation}
it is clear that the complexity of the class $\mathcal F$ plays a fundamental role on the convergence rates of the cost.
}

{It was first observed by \cite{Forrow.et.al.AiStats.19} that if one of the marginals is discrete, then the OT cost does not suffer from the curse of dimensionality. } {In the} seminal work  \cite{Hundrieser.et.al.2024.AIHP}, the authors realized that $c$-conjugate operations {does not increase} the complexity of the class, i.e., the covering numbers of 
{
\(\mathcal F^c=\{f^c:f\in\mathcal F\}\) are bounded by those of
\(\mathcal F\), for any bounded cost}. This clever observation implies that the statistical complexity of the OT cost adapts to the smaller of the complexities of the probability measures as we explain below.

If the cost function is  Lipschitz, with constant $L$, and both measures are supported on compact sets, then the empirical and population OT potentials $f_*$  and $f_n$  belong to the class ${\mathcal F=}{\rm Lip}_L({\rm supp}(P))  $ of $L$-Lipschitz functions over the support ${\rm supp}(P)$ of $P$. If ${\rm supp}(P)$ has low intrinsic dimension then ${\rm Lip}_L({\rm supp}(P))$ 
{has small covering number. If ${\rm supp}(Q)$ has low intrinsic dimension, then, by symmetry, one can select $\mathcal F =({\rm Lip}_L({\rm supp}(Q)))^c$, which, by the previous discussion, again has small covering numbers.} This implies the following result (for a more general statement we refer to \cite[Theorems~3.3 and 3.8]{Hundrieser.et.al.2024.AIHP}). Here the complexity of the support ${\rm supp}(\mu)$ of a measure $\mu$ is quantified by its covering numbers $ \mathcal{N}(\epsilon, {\rm supp}(\mu), \|\cdot\|)$. 
\begin{theorem}\label{theorem:Shayan}
    Let $P$ and $Q$ be compactly supported probability measures on $\R^d$. Assume that there exist $C,\epsilon_0,\kappa>0$ such that 
    $$ \mathcal{N}(\epsilon, {\rm supp}(\mu), \|\cdot\|) \leq C \epsilon^{-\kappa} \quad \text{for } \epsilon\leq \epsilon_0$$
   and $\mu=P$ or $\mu=Q$. Then 
$$ \E\left[\left|  \mathcal{T}_{p}(P_n,Q)-\mathcal{T}_{p}(P,Q)\right|\right]\lesssim  a_{n,\kappa,p}, $$
where 
$$ a_{n,\kappa,p}=\begin{cases}
    n^{-\frac{1}{2}}& \hspace{-1mm}\text{if  $\kappa< 2$ or $\kappa\leq  3$ and $p>1$} \\
    \frac{\log(n)}{n^{\frac{1}{2}}}& \hspace{-1mm}\text{if  $(\kappa,p)=(2,1)$ or $\kappa=4$ and $p>1$}\\
    n^{-\frac{1}{\kappa}}& \hspace{-1mm}\text{otherwise}. 
\end{cases}$$
\end{theorem}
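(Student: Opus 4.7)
The plan is to bound $|\mathcal{T}_p(P_n,Q)-\mathcal{T}_p(P,Q)|$ by a centered empirical process over a suitable class of potentials and then to use the complexity-preserving property of the $c_p$-transform to show that the entropy of this class is controlled by the \emph{easier} of the two supports. First I would invoke the semidual \eqref{eq:Semidual}: if $f_\ast$ and $f_n$ are optimal potentials for the population and empirical problems respectively, testing each as a competitor in the other yields
\begin{equation*}
\mathcal{T}_p(P_n,Q)-\mathcal{T}_p(P,Q)\;\le\;\int f_n\,d(P_n-P),\qquad \mathcal{T}_p(P,Q)-\mathcal{T}_p(P_n,Q)\;\le\;\int f_\ast\,d(P-P_n),
\end{equation*}
hence
\begin{equation*}
|\mathcal{T}_p(P_n,Q)-\mathcal{T}_p(P,Q)|\;\le\;\sup_{f\in\mathcal F}\Big|\int f\,d(P_n-P)\Big|
\end{equation*}
for any $\mathcal F$ containing $f_\ast$ and every admissible $f_n$. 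Since $P,Q$ have compact supports and $c_p$ is locally Lipschitz on the relevant ambient ball, one can take $\mathcal F\subset\{f\in\mathrm{Lip}_L(\mathrm{supp}(P)):\|f\|_\infty\le M\}$ for constants $L,M$ depending only on $p$ and the diameters.

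The key reduction is the observation of \cite{Hundrieser.et.al.2024.AIHP} that $g\mapsto g^{c_p}$ is $1$-Lipschitz in $\|\cdot\|_\infty$, and that every optimal potential satisfies $f=(f^{c_p})^{c_p}$. Parametrising $\mathcal F$ alternatively via its $c_p$-transforms living on $\mathrm{supp}(Q)$ therefore gives
\begin{equation*}
\log N\bigl(\epsilon,\mathcal F,\|\cdot\|_\infty\bigr)\;\lesssim\;\min_{\mu\in\{P,Q\}}\log N\bigl(\epsilon,\mathrm{Lip}_L(\mathrm{supp}(\mu))\cap B_\infty(M),\|\cdot\|_\infty\bigr),
\end{equation*}
and by Kolmogorov--Tikhomirov the right-hand side is $\lesssim \epsilon^{-\kappa}$ as soon as one of $\mathrm{supp}(P),\mathrm{supp}(Q)$ has metric-entropy exponent $\kappa$, which is precisely the hypothesis.

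With $\log N(\epsilon,\mathcal F,\|\cdot\|_\infty)\lesssim \epsilon^{-\kappa}$ and $\mathcal F$ uniformly bounded, standard empirical-process maximal inequalities yield
\begin{equation*}
\mathbb E\Big[\sup_{f\in\mathcal F}\Big|\int f\,d(P_n-P)\Big|\Big]\;\lesssim\;\max\!\bigl(n^{-1/2},\,n^{-1/\kappa}\bigr),
\end{equation*}
with an extra $\log n$ factor at the boundary $\kappa=2$: for $\kappa<2$ this is the Dudley entropy integral, and for $\kappa\ge 2$ an Ossiander-type bracketing bound or a peeling argument à la van der Vaart--Wellner. Combined with the variational bound, this already establishes the theorem for $p=1$.

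The sharper rates for $p>1$ --- extension of the $n^{-1/2}$ regime up to $\kappa\le 3$ and of the logarithmic boundary to $\kappa=4$ --- constitute the main obstacle and are where the bulk of the argument in \cite{Hundrieser.et.al.2024.AIHP} concentrates. They cannot be read off from the crude chaining estimate above and require a localisation/peeling refinement that exploits the additional regularity of $c_p$-concave functions when $p>1$ (semiconcavity for $p=2$, and Hölder-type structure in general). Concretely, the crude bound yields a preliminary rate for $\|f_n-f_\ast\|_{L^2(P)}$; one then restricts $\mathcal F$ to a shrinking $L^2(P)$-neighbourhood of $f_\ast$ on which a Bernstein-type variance control, combined with the improved regularity of the $c_p$-transform, produces an extra $\sqrt{n}$ factor that effectively shifts the entropy exponent by one. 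Verifying the variance estimate, the regularity of $c_p$-concave potentials, and handling the non-Donsker threshold $\kappa=4$ inside this iteration is the technically delicate part of the proof.
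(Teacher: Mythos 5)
Your high-level strategy is exactly the one the survey attributes to \cite{Hundrieser.et.al.2024.AIHP}: the one-sided variational bounds coming from testing $f_n$ and $f_\ast$ against each other in the semidual are correct, reducing the deviation to the sup of a centered empirical process over a class that contains the relevant potentials; and the invocation of the $c$-conjugate complexity-preservation (covering numbers of $\mathcal F$ equal those of $\mathcal F^{c_p}$, hence controlled by the \emph{easier} support) is precisely the engine of the lower-complexity-adaptation result. Your Dudley/truncated-chaining account of the $\kappa<2$, $\kappa=2$, $\kappa>2$ trichotomy then delivers the $p=1$ rates as stated, and this much matches what the survey outlines (it offers no proof of its own, referring to Theorems~3.3 and~3.8 of \cite{Hundrieser.et.al.2024.AIHP}).

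Where your sketch goes astray is in diagnosing the $p>1$ improvement as a localisation/peeling refinement with Bernstein-type variance control. No localisation is needed. For $p>1$ on compact supports the cost $c_p$ has more than Lipschitz regularity, and the $c_p$-conjugate operation produces functions with second-order structure (semiconcavity in the quadratic case; higher-order H\"older regularity more generally). Since every optimal potential is a $c_p$-conjugate, the class $\mathcal F$ is not merely $\mathrm{Lip}_L$ but this smaller, smoother class; its metric entropy on a set of covering exponent $\kappa$ is of order $\epsilon^{-\kappa/2}$ rather than $\epsilon^{-\kappa}$. Feeding this once-and-for-all improved entropy into the \emph{same} chaining/peeling machinery is what pushes the Donsker threshold from $\kappa<2$ out to the stated $p>1$ regime and shifts the logarithmic boundary to $\kappa=4$ — no iterative restriction of $\mathcal F$ to an $L^2(P)$-neighbourhood of $f_\ast$ is involved. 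You do mention the ``improved regularity of the $c_p$-transform'' as an ingredient, so you have the right insight, but it enters at the level of the static entropy estimate, not through a peeling scheme. Separating those two would turn your proposal into the actual argument.
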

Theorem~\ref{theorem:Shayan} implies that if one of the measures has  intrinsic dimension $\kappa\leq 3$ then the OT cost is not cursed by dimensionality for  $p>1$. Moreover, the optimization class in the dual formulation \eqref{eq:potentials} is intrinsically Donsker. Then, by means of the functional delta method, we obtain the following result (again, this is a simplified version of the results in \cite{Hundrieser.et.al.BJ.2024.Unifying}, which should be consulted for a much more complete account of the implications of this lower complexity adaptation principle).

\begin{theorem}\label{theorem:Shayan:CLT}
Assume $P,Q$ are compactly supported probabilities on $\mathbb{R}^d$ with finite moments of order $2p$, where $p\geq 2$ and $d\leq 3$. Assume further that $P$ has a density and connected support with a negligible boundary. Then 
\begin{equation}\label{biased_CLT_general_d}
\sqrt{n}(\mathcal{T}_p(P_n,Q)-\mathcal{T}_p(P,Q))\underset w \to N(0,\sigma_p^2(P,Q)),
\end{equation}
{where 
$\sigma_p^2(P,Q)$ 
is as in \eqref{eq:Limitng-variance-multivariate}.}
\end{theorem}
The assumptions in Theorem \ref{theorem:Shayan:CLT} guarantee uniqueness (up to an additive constant) of the OT potential. This is the key to the Gaussianity of the limiting distribution, but other non-Gaussian limits can be obtained without this assumption. 

Now that we have established that the statistical complexity of the transport cost adapts to the lower intrinsic dimension of the marginal probability measures, a natural question arises: does the same hold for the transport potentials or plans? This remains an open problem, except in the semi-discrete case, which we will examine next.

\subsection{Semi-discrete OT}

An important application of Theorem~\ref{theorem:Shayan} arises in the setting of semi-discrete optimal transport, where one of the probability measures, $P$ or $Q$, is discrete while the other remains continuous. In this case, the entropy  numbers of the discrete measure’s support are inherently bounded, ensuring that the CLT for the empirical OT cost holds irrespective of the dimension. The next result was established by \cite{delBarrioetAl.2024.BJ}, independently of the related findings in \cite{Hundrieser.et.al.2024.AIHP,Hundrieser.et.al.BJ.2024.Unifying}. 
\begin{theorem}\label{semidiscrete_CLT_cost}
Assume $P, Q$ are probabilities on $\mathbb{R}^d$ with $P$ finitely supported and $Q$ having a density and connected support with a negligible boundary. If $Q$ has finite moment of order $p$ then 
\begin{equation}\label{semidiscrete_CLT_1}
\sqrt{n}(\mathcal{T}_p(P_n,Q)-\mathcal{T}_p(P,Q))\underset w \to N(0,\sigma_p^2(P,Q)),
\end{equation}
{where 
$\sigma_p^2(P,Q)$ 
is as in \eqref{eq:Limitng-variance-multivariate}.}
The same result holds if $Q$ is finitely supported, 
and $P$ has a density with connected support and negligible boundary and finite moment of order $2p$. 
\end{theorem}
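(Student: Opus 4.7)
The plan is to exploit the finite-dimensionality of the dual problem that arises in both cases. If $P=\sum_{i=1}^N p_i\delta_{x_i}$ is finitely supported, then the semidual formulation \eqref{eq:Semidual} reduces to
$$\mathcal{T}_p(P,Q)=\sup_{\mathbf{u}\in\mathbb{R}^N}\Big\{\langle \mathbf{p},\mathbf{u}\rangle+\int \mathbf{u}^{c_p}(y)\,dQ(y)\Big\},\qquad \mathbf{u}^{c_p}(y)=\min_i\{|x_i-y|^p-u_i\},$$
and the situation is symmetric when $Q$ is finitely supported. In both cases the assumption that the continuous marginal has a density with connected support and negligible boundary will force uniqueness (up to an additive constant) of the optimal dual vector, and this uniqueness is what eventually drives the Gaussianity of the limit.

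For Case 1 I would combine Theorem~\ref{CLT_discrete} with the directional delta method. The multinomial vector $\sqrt{n}(\hat{\mathbf{p}}-\mathbf{p})$ converges weakly to $\mathbf{G}\sim N(0,\Sigma(\mathbf{p}))$, and $\mathbf{p}\mapsto \mathcal{T}_p(\mathbf{p},Q)$ is Hadamard directionally differentiable with derivative $\mathbf{h}\mapsto \max_{\mathbf{u}\in\Phi_p^*(\mathbf{p},\mathbf{q})}\langle\mathbf{u},\mathbf{h}\rangle$. The new ingredient beyond Theorem~\ref{CLT_discrete} is that, since $Q$ is absolutely continuous with connected support having negligible boundary, the optimal dual vector $\mathbf{u}_0=(f_0(x_1),\dots,f_0(x_N))$ is unique modulo additive constants. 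Because directions $\mathbf{h}$ in the simplex tangent space satisfy $\sum_i h_i=0$, this constant ambiguity is irrelevant and the derivative is linear: $d\mathcal{T}_p|_{\mathbf{p}}(\mathbf{h})=\langle\mathbf{u}_0,\mathbf{h}\rangle$. The delta method then yields $\sqrt{n}(\mathcal{T}_p(P_n,Q)-\mathcal{T}_p(P,Q))\underset{w}{\longrightarrow}\langle\mathbf{u}_0,\mathbf{G}\rangle\sim N(0,\mathbf{u}_0^\top\Sigma(\mathbf{p})\mathbf{u}_0)$, and a direct computation identifies $\mathbf{u}_0^\top\Sigma(\mathbf{p})\mathbf{u}_0=\int f_0^2\,dP-(\int f_0\,dP)^2=\sigma_p^2(P,Q)$. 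The assumption that $Q$ has a finite moment of order $p$ is used only to guarantee $\mathbf{u}_0^{c_p}\in L_1(Q)$, while $f_0$ is automatically bounded on the finite set $\mathrm{supp}(P)$.

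For Case 2 the parameter $\mathbf{v}\in\mathbb{R}^M$ is again finite-dimensional, but randomness now enters through $P_n$. Starting from the standard dual inequalities
$$\int f_0\,d(P_n-P)\le \mathcal{T}_p(P_n,Q)-\mathcal{T}_p(P,Q)\le \int f_n\,d(P_n-P),$$
where $f_0=\mathbf{v}_0^{c_p}$ and $f_n=\mathbf{v}_n^{c_p}$ are the $c$-conjugates of the population and empirical maximizers, the classical univariate CLT applied to $\sqrt{n}\int f_0\,d(P_n-P)$ produces the $N(0,\sigma_p^2(P,Q))$ limit; finiteness of $\sigma_p^2(P,Q)=\mathrm{Var}_P(f_0)$ follows from the pointwise bound $|f_0(x)|\le |x-y_1|^p+|v_{0,1}|$ together with the $2p$-moment assumption on $P$. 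It therefore suffices to verify that $\sqrt{n}\int(f_n-f_0)\,d(P_n-P)\underset{\text{\small Pr.}}{\longrightarrow}0$. I would obtain this via the Efron--Stein linearization sketched after Theorem~\ref{general1dCLT}: this inequality yields $n\,\mathrm{Var}(\mathcal{T}_p(P_n,Q)-\int f_0\,dP_n)\le c\,\mathbb{E}[(f_n(X_1)-f_0(X_1))^2]$, which is $o(1)$ as soon as $f_n\to f_0$ in $L_2(P)$, and the expected-value matching $\sqrt{n}(\mathbb{E}[\mathcal{T}_p(P_n,Q)]-\mathcal{T}_p(P,Q))\to 0$ can then be extracted from the same $L_2$ consistency by taking expectations in the dual bounds above.

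The main obstacle is precisely this $L_2(P)$ consistency of the empirical potentials in Case 2. The $c_p$-conjugate map $\mathbf{v}\mapsto\mathbf{v}^{c_p}$ is nonlinear, so one must first prove $\mathbf{v}_n\to\mathbf{v}_0$ in $\mathbb{R}^M$ (after normalizing, say by fixing $v_M=0$, to remove the additive-constant ambiguity) via an Argmax continuous-mapping argument applied to the strictly concave population semidual, and then upgrade this to convergence in $L_2(P)$ by dominated convergence with envelope controlled by the $2p$-moment of $P$. Uniqueness of $\mathbf{v}_0$, which is needed for the Argmax step, is where the connectedness of $\mathrm{supp}(P)$ with negligible boundary and the continuity of the density of $P$ come into play. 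Once these ingredients are in place, the three estimates assemble into the stated CLT with centering $\mathcal{T}_p(P,Q)$.
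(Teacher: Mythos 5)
Your plan for the first case ($P$ finite, $Q$ continuous) is sound and is essentially the route taken in \cite{Hundrieser.et.al.BJ.2024.Unifying}: the parameter lives in the simplex, the multinomial CLT supplies the Gaussian input, and uniqueness modulo constants of the dual vector (from the density/connectedness assumptions on $Q$) makes the directional Hadamard derivative linear on the tangent space of the simplex, whence the Gaussian limit with variance $\mathbf{u}_0^\top\Sigma(\mathbf{p})\mathbf{u}_0=\operatorname{Var}_P(f_0)$. The paper itself attributes the result to \cite{delBarrioetAl.2024.BJ}, whose proof runs instead through the \emph{potentials}: smoothness and strict concavity of the semidual, nondegeneracy of its Hessian, a Polyak--{\L}ojasiewicz inequality giving parametric rates for $z^{(n)}$, and then a CLT for the cost as a corollary. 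Your Case~1 therefore takes the alternative (and equally valid) Hadamard-differentiability route rather than the paper's route.

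The gap is in Case~2. You correctly reduce to showing $\sqrt n\int(f_n-f_0)\,d(P_n-P)\to 0$, but then you argue through Efron--Stein plus the claim that ``the expected-value matching $\sqrt n(\mathbb E[\mathcal T_p(P_n,Q)]-\mathcal T_p(P,Q))\to 0$ can then be extracted from the same $L_2$ consistency by taking expectations in the dual bounds above.'' This step does not follow. Taking expectations in your sandwich only gives $0\le \mathbb E[\mathcal T_p(P_n,Q)]-\mathcal T_p(P,Q)\le \mathbb E\bigl[\int(f_n-f_0)\,d(P_n-P)\bigr]$, and $f_n$ depends on the full sample, so consistency of $f_n$ in $L_2(P)$ (or even in $\|\cdot\|_\infty$) gives only $\bigl|\int(f_n-f_0)\,d(P_n-P)\bigr|\le 2\|f_n-f_0\|_\infty=o_P(1)$, not $o_P(n^{-1/2})$; the $\|P_n-P\|_{\mathrm{TV}}$ factor is $O(1)$. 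The Efron--Stein step controls only the variance of $\mathcal T_p(P_n,Q)-\int f_0\,dP_n$, i.e.\ the fluctuation around its mean, and says nothing about the bias. What is actually needed is the asymptotic equicontinuity of the empirical process $\mathbb G_n=\sqrt n(P_n-P)$ over the finite-dimensional Lipschitz class $\{f^{\mathbf v}-f_0:\|\mathbf v-\mathbf v_0\|\le\delta\}$: since this class is Donsker with $L_2(P)$-diameter $\lesssim\delta$, consistency $\mathbf v_n\to \mathbf v_0$ upgrades to $\mathbb G_n(f_n-f_0)=o_P(1)$, which is exactly $\sqrt n\int(f_n-f_0)\,d(P_n-P)=o_P(1)$ and closes the sandwich without ever invoking Efron--Stein. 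Alternatively, the paper's cited route uses the nondegenerate Hessian/Polyak--{\L}ojasiewicz inequality to obtain explicit $n^{-1/2}$ rates for $\mathbf v_n$ and then a second-order expansion, which again supplies the missing control. As written, your Case~2 argument does not reach the stated centering; replace the ``take expectations'' step with one of these two arguments and the proof is complete.
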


In this semidiscrete setup it is possible to give CLTs for the optimal potentials, as we discuss next. In our presentation we focus only on the squared-Euclidean cost $c_2$. We assume that  $P$ is a discrete probability measure with atoms $x_1, \dots, x_N\in \R^d$ and weights $p_1, \dots, p_N$ in the $N$-dimensional simplex, i.e., $P=\sum_{i=1}^N p_i \delta_{x_i}$. The semidual formulation of this problem maximizes the function $\mathbb{M}(z)$ defined as
\begin{equation}
    \label{semi-dual-intro}
\sum_{i=1}^N z_ip_i+\int \min_{i=1, \dots, N} \{ \|x_i- y \|^2 -z_i \}dQ(y),
\end{equation}
where the solution $z^* $ of \eqref{semi-dual-intro} encodes the OT potentials and defines the Laguerre cells 
$$    \operatorname{Lag}_k(z):=\{ y :\ \ \|x_k-y \|^2 -z_k^* <\|x_i-y \|^2-z_i^*, \  \forall\, i\neq k \}.$$
 Semi-discrete optimal transport plays an important role in applications in economic modeling, quantization \cite{Siegfried.Luschgy.2000.Book},  partial differential equations \cite{Gallouet.Merigot.2018.Found.Comp.Math}, seismic imaging \cite{Meyron.2019.Compt.Aid.Des} or astronomy \cite{Levy.et.al.2021.Royal.Astron.Soc}. For instance, in quantization the Laguerre cells provide the optimal assignment of data points   $Q$ to the centroids $x_1, \dots, x_N$. In economic applications (see \cite{Galichon.2016.Book}), {the} discrete set $\{x_1, \dots, x_N\}$ represents {the location of different sellers of a common product,}   weights $p_1, \dots, p_N$ represent the capacity of each seller, $Q$ the distribution of consumers, and the cost function $c$ represents minus the utility function $U$.  The sellers aim at raising the prices,  represented by the vector $(v_1, \dots, v_N)$, and a consumer $y$ chooses a store according to the following {criterion};   maximize the utility while minimizing the price, i.e.,   
 $$ \argmin_{i=1, \dots, N} \{ \|x_i- y \|^2 +v_i \}. $$
 The equilibrium of the system is found when the prices are the OT potentials for the cost function equal to minus the squared-Euclidean distance, see \cite{Galichon.2016.Book}. Hence,  Laguerre cells represent the demand sets of the market and the results obtained by \cite{delBarrioetAl.2024.BJ,Sadhu.Goldfeld.Kato.2024.AoAP.Samidi}, which we describe below, provide confidence bands to the optimal prices and demand sets of random markets. 
 
\subsubsection{Potentials}
In \cite{Kitagawa.Merigot.Thibert.2019.JEMS} it was shown that if $Q$  is supported on a compact convex set $\Omega'$ and has a continuous density bounded away from zero,  the  functional  \eqref{semi-dual-intro} is concave and twice differentiable with first derivative 
    \begin{equation}
      \label{derivativeZest}(-Q(\operatorname{Lag}_1(z))+p_1, \dots, -Q(\operatorname{Lag}_N(z))+p_N ),
    \end{equation}
      and Hessian matrix $\nabla^2\mathbb{M}(z)=\left( \frac{\partial^2}{\partial z_i \partial z_j }  \mathbb{M}(z)\right)_{i,j=1,\dots,N}$  with partial derivatives 
\begin{align*}
    \frac{\partial^2}{\partial z_i \partial z_j }  \mathbb{M}(z)&=\int_{\operatorname{Lag}_i(z)\cap \operatorname{Lag}_j(z) } \frac{q(\mathbf{y})}{\|x_i-x_j\|}d\mathcal{H}^{d-1}(y),\\
    \end{align*}
    if $i\neq j$,
    and
    \begin{align*}
\frac{\partial^2}{\partial^2 z_i} \mathbb{M}(z) &=-\sum_{j\neq i}\frac{\partial^2 }{\partial z_i\partial z_j} \mathbb{M}(z).  
\end{align*}
Moreover, under these conditions the Hessian $\nabla^2\mathbb{M}(z^*)$ is a  negative symmetric matrix having zero as simple eigenvalue, corresponding to the  `constant' vector $(1, \dots, 1)$.  
Hence, the optimal potential $z^*$ is then found where the equilibrium 
$Q(\operatorname{Lag}_i(z^*))=p_i  $ is attained, for all $i=1, \dots, N$. The nondegeneracy of the Hessian yields a Polyak-Lojasiewicz inequality for the dual functional. Then, due to the intrinsic parametric complexity of the problem, \cite{delBarrioetAl.2024.BJ} proved that the rates of convergence of the empirical potential $z^{(n)}$   are parametric and derived the following central limit theorem. Some of the assumptions have been relaxed by \cite{Goldfeld.et.al.2024.AoAP}.
\begin{theorem}\label{theorem:barrioSemi}
    Let $Q$ be supported on a compact convex set in which it has a continuous density bounded away from zero. Then it holds that 
    $$ \sqrt{n}(z^{(n)}-z^*){ \underset w \to } \left(\nabla^2\mathbb{M}(z^*)\right)^{-1}(\mathbb{U}_1, \dots, \mathbb{U}_N),$$
    where $(\mathbb{U}_1, \dots, \mathbb{U}_N)$ is a centered multivariate Gaussian with variance \eqref{sigma_Natrix}. 
\end{theorem}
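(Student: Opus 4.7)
The approach is a classical Z-estimator argument exploiting the strict concavity of the semidual functional $\mathbb{M}$ and the explicit Hessian formula given just before the statement. Since the empirical potential $z^{(n)}$ maximizes the functional $\mathbb{M}_{\hat{\Bp}}$ obtained by replacing $\Bp$ by the empirical weights $\hat{\Bp}=(\hat p_1,\dots,\hat p_N)$, its optimality condition is $Q(\operatorname{Lag}_i(z^{(n)}))=\hat p_i$ for all $i$, whereas $Q(\operatorname{Lag}_i(z^*))=p_i$ by the equilibrium condition for the population potential. Subtracting these two identities yields
\begin{equation*}
\hat p_i-p_i = Q(\operatorname{Lag}_i(z^{(n)}))-Q(\operatorname{Lag}_i(z^*)),\qquad i=1,\dots,N,
\end{equation*}
and the plan is to linearize the right-hand side around $z^*$ using the Hessian identity $\partial_j Q(\operatorname{Lag}_i(z)) = -\partial_i\partial_j \mathbb{M}(z)$ which follows from $\partial_i \mathbb{M}(z)=p_i-Q(\operatorname{Lag}_i(z))$.

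First I would establish consistency of $z^{(n)}$, suitably normalized (say, $\sum_i z_i^{(n)}=\sum_i z_i^*$, to kill the one-dimensional kernel spanned by $(1,\dots,1)$). The functional $\mathbb{M}$ is concave and, by the Polyak–Łojasiewicz type inequality following from the fact that $\nabla^2\mathbb{M}(z^*)$ is negative definite on the orthogonal complement of constants (a consequence of the positive continuous density of $Q$ on a convex compact support), it is in fact strongly concave on that subspace around $z^*$. Since $\hat{\Bp}\to\Bp$ almost surely by the LLN and $\mathbb{M}_{\hat{\Bp}}$ converges to $\mathbb{M}$ uniformly on compact sets, standard arguments for argmax consistency give $z^{(n)}\to z^*$ in probability.

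Next I would perform a first-order Taylor expansion of the map $\Psi(z):=(Q(\operatorname{Lag}_i(z)))_{i=1}^N$ around $z^*$. The $C^2$ regularity of $\mathbb{M}$ established in \cite{Kitagawa.Merigot.Thibert.2019.JEMS}, together with the continuity of the density of $Q$, ensures that $\Psi$ is $C^1$ with derivative $-\nabla^2\mathbb{M}(z)$ and that this derivative is continuous at $z^*$. Therefore
\begin{equation*}
\hat{\Bp}-\Bp=-\nabla^2\mathbb{M}(z^*)\,(z^{(n)}-z^*)+R_n,\qquad \|R_n\|=o(\|z^{(n)}-z^*\|),
\end{equation*}
and consistency combined with the $\sqrt{n}$-stochastic boundedness of $\hat{\Bp}-\Bp$ upgrades this to $R_n=o_P(n^{-1/2})$. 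Both sides of the linearized equation live in the hyperplane $\{v:\sum_i v_i=0\}$ (by the simplex constraint for $\hat{\Bp}-\Bp$ and by our normalization of $z^{(n)}$), and on this hyperplane $\nabla^2\mathbb{M}(z^*)$ is invertible, so we can solve
\begin{equation*}
\sqrt{n}(z^{(n)}-z^*)=-\bigl(\nabla^2\mathbb{M}(z^*)\bigr)^{-1}\sqrt{n}(\hat{\Bp}-\Bp)+o_P(1).
\end{equation*}
The multivariate CLT gives $\sqrt{n}(\hat{\Bp}-\Bp)\xrightarrow{w}(\mathbb{U}_1,\dots,\mathbb{U}_N)\sim\mathcal{N}(0,\Sigma(\Bp))$; since this Gaussian has the same law as its negative, the conclusion follows from Slutsky.

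The main obstacle is the Taylor-remainder step: one must show that $\Psi$ is genuinely differentiable, with continuous derivative, at $z^*$. This requires controlling how the interfaces $\partial\operatorname{Lag}_i(z)\cap\partial\operatorname{Lag}_j(z)$ deform as $z$ varies, and it is only the assumption that $Q$ has a continuous density bounded away from zero on a compact convex set that makes the surface integrals defining $\nabla^2\mathbb{M}(z)$ depend continuously on $z$. A secondary technical issue is handling the kernel cleanly: one has to fix a normalization of potentials and verify that the increments genuinely live in the orthogonal complement of constants, so that the (non-uniquely defined) inverse of $\nabla^2\mathbb{M}(z^*)$ acts meaningfully in the limit.
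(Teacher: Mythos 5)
Your argument is correct and follows essentially the same route the paper attributes to \cite{delBarrioetAl.2024.BJ}: strong concavity of the semidual on the orthocomplement of constants (the Polyak--\L ojasiewicz inequality the paper mentions) gives consistency and parametric rates, and a Z-estimator linearization of $z\mapsto(Q(\operatorname{Lag}_i(z)))_i$ around $z^*$ combined with the multinomial CLT for $\hat{\mathbf p}$ and the sign-symmetry of the Gaussian yields the stated limit. The two technical points you flag --- continuity in $z$ of the Hessian surface integrals, guaranteed by the continuous positive density on a compact convex support, and pinning down the normalization so that both sides of the linearized identity live in the hyperplane where $\nabla^2\mathbb{M}(z^*)$ is invertible --- are indeed the places where the hypotheses enter, and you have used them correctly.
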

The other potential can be derived 
from $z^*$ via the relation 
$$ f_*(y)= \min_{i=1, \dots, N} \{ \|x_i- y \|^2-z_i^*\} . $$
The empirical potential $f_n(y)$ is obtained in a similar fashion.
The observation 
\begin{align*}
     \|f_n-f_*\|_\infty&=\sup_{y\in \Omega'} |f_n(y)-f_*(y)|\\ &= \max_i|z_i^{(n)}-z_i^*|=\|z^{(n)}-z^*\|_\infty,
\end{align*}
and Theorem~\ref{theorem:barrioSemi} yield the limit
$$ \sqrt{n} \|f_n-f_*\|_\infty { \underset w \to }\big\|(\nabla^2\mathbb{M}_{P}(z^*))^{-1}((\mathbb{U}_1,\dots, \mathbb{U}_N))\big\|_{\infty}.$$
Laguerre cells are another interesting object related to the semi-discrete optimal transport. They are set-valued mappings of the potentials $z^*$. Since each cell is convex, we can measure the distances between the cells via the support functions. Since the cells,  as defined above, are not bounded, we truncate them in a sufficiently large ball containing the support of $Q$ and define 
$$ \operatorname{Lag}_i^R(z)= \overline{\operatorname{Lag}_i(z)\cap R\, \mathbb{B}_d},$$
{where $\overline{A}$ denotes the closure of a set $A$.}

Recall that the support function of a bounded convex set $S$ is the convex conjugate of the convex indicator function of $S$, i.e.,  $h_S(x)=\sup_{y\in S} \langle x,y\rangle$. The values in the unit sphere $\mathcal{S}^{d-1}$ of the support function characterize a bounded convex set.  Hence, for $p\in [1, \infty)$ we define $L^p$ metric  as $$d_p(A,B):=\left(\int_{\mathcal{S}^{d-1}} |h_{A}-h_{B}|^p d\mathcal{H}^{d-1}\right)^{\frac{1}{p}},$$
where $\mathcal{H}^{d-1}$ is the Hausdorff measure in $\mathcal{S}^{d-1}$,   and the uniform norm  $ d_{\infty}(A,B)=\sup_{v\in \mathbb{S}^{d-1}} |h_{A} -h_{B} |, $ which corresponds
{to} the Hausdorff distance between compact convex sets. Using the characterization
$$   h_{ \operatorname{Lag}_k^R(z)}(v)
   =\min_{t_j>0}\Big\lbrace \sum_{j\neq k}t_j (\| {x}_k\|^2-\| {x}_j\|^2-z_k+z_j)
   +R\bigg\|v-\sum_{j\neq k}t_j (\mathbf{x}_k-\mathbf{x}_j) \bigg\|\Big\rbrace$$
of the support functions of the Laguerre cells, \cite{delBarrioetAl.2024.BJ} proved the following limit theorem. 
\begin{theorem}\label{theorem:barrioSemi-Lag}
    Let $Q$ be supported on a compact convex set, with a continuous density bounded away from zero. Then the sequence,  
    $$ \sqrt{n}\left( h_{ \operatorname{Lag}_k^R(z^n)}-h_{ \operatorname{Lag}_k^R(z^*)}\right)$$ has a Gaussian limit in distribution in $L^p$ for all $p\in [1, \infty)$.
\end{theorem}
While no CLT in Hausdorff distance is available for the Laguerre cells, \cite{delBarrioetAl.2024.BJ}  
provides confidence intervals for the Hausdorff distance between the cells (Remark~4.7).  
\subsubsection{Plans and maps}
Limit theorems of the $L^p$-distances between the support functions of the cells are useful to provide confidence bands on the demand sets in economic applications. However, these distances do not take into account the element of the discrete set $\{x_1, \dots, x_N\}$ associated with the cell, which  is useful in  some applications such as quantization. This can be {measured} by means of the transport map, i.e., the a.s.\ defined gradient of the convex function  $\varphi_n(y)=\|y\|^2/2- g_n(y)/2$, {where $g_n$ is an empirical OT potential.} { Here the most popular distance  is that induced by the ${L^{\gamma}(Q)}$ distance for $\gamma\geq 1$.} Unfortunately, as \cite{Sadhu.Goldfeld.Kato.2024.AoAP.Samidi} proved, the OT maps do not satisfy a CLT in  ${L^{\gamma}(Q)}$. However, they do satisfy a central limit theorem in dual topologies. We denote by $\mathcal{C}^{\beta}(\Omega';\R^d)$  the Banach space of continuous $\beta$-H\"older functions and by $(\mathcal{C}^{\beta}(\Omega';\R^d))'$ its  dual space. 
\begin{theorem}\label{theorem:Kato}
   Let $Q$  be supported on a compact convex set and have a continuous density bounded away from zero. Then 
    \begin{enumerate}
        \item There exists a nonzero random variable $\mathbb{V}_\gamma$ such that  $n^{\frac{1}{2\gamma}}\|\nabla \varphi_n -\nabla \varphi_*\|_{L^{\gamma}(Q)}  { \underset w \to } \mathbb{V}_\gamma $;
        \item There exists a tight random element $\mathbb{G}$ in the dual Banach space $(\mathcal{C}^{\beta}(\Omega';\R^d))'$ such that the element
        $$  \mathcal{C}^{\beta}(\Omega';\R^d)\ni f\mapsto  \sqrt{n}\langle \nabla \varphi_n -\nabla \varphi_*, f\rangle_{L^{2}(Q)}$$
        of $(\mathcal{C}^{\beta}(\Omega';\R^d))'$ converges in distribution to $\mathbb{G}$ in  $(\mathcal{C}^{\beta}(\Omega';\R^d))'$.
    \end{enumerate}
\end{theorem}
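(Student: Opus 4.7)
The plan is to exploit the piecewise-constant structure of the semi-discrete OT map: $\nabla\varphi_*\equiv x_i$ on $\operatorname{Lag}_i(z^*)$ and $\nabla\varphi_n\equiv x_i$ on $\operatorname{Lag}_i(z^{(n)})$, so that $\nabla\varphi_n-\nabla\varphi_*$ is supported on the union of symmetric differences of cells and equals $x_i-x_j$ on $\operatorname{Lag}_i(z^{(n)})\cap\operatorname{Lag}_j(z^*)$. The facet $\partial_{ij}$ lies on the hyperplane $\{y:2(x_j-x_i)\cdot y=\|x_j\|^2-\|x_i\|^2+z_i^*-z_j^*\}$, and the perturbation $z^*\mapsto z^{(n)}$ translates it by the signed distance
\[
\delta_{ij}^{(n)}=\frac{(z_i^{(n)}-z_i^*)-(z_j^{(n)}-z_j^*)}{2\|x_i-x_j\|},
\]
which by Theorem~\ref{theorem:barrioSemi} is $O_P(n^{-1/2})$ with $\sqrt{n}\,\delta_{ij}^{(n)}$ converging jointly to $(W_i-W_j)/(2\|x_i-x_j\|)$, where $W$ denotes the Gaussian limit of $\sqrt{n}(z^{(n)}-z^*)$.

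For part (1), I would decompose $\|\nabla\varphi_n-\nabla\varphi_*\|_{L^\gamma(Q)}^\gamma$ as $\sum_{i\ne j}\|x_i-x_j\|^\gamma\,Q\bigl(\operatorname{Lag}_i(z^{(n)})\cap\operatorname{Lag}_j(z^*)\bigr)$ and, via the coarea formula together with continuity of $q$, approximate each $Q$-measure by $(\delta_{ij}^{(n)})_+\int_{\partial_{ij}}q\,d\mathcal{H}^{d-1}+O((\delta_{ij}^{(n)})^2)$. Grouping into unordered pairs, multiplying by $\sqrt{n}$, and invoking the continuous mapping theorem yield
\[
n^{1/2}\|\nabla\varphi_n-\nabla\varphi_*\|_{L^\gamma(Q)}^\gamma\underset{w}{\longrightarrow}\sum_{\{i,j\}}\frac{\|x_i-x_j\|^{\gamma-1}}{2}\,|W_i-W_j|\int_{\partial_{ij}}q\,d\mathcal{H}^{d-1},
\]
so $\mathbb{V}_\gamma$ is the $\gamma$-th root of the right-hand side, a.s.\ nonzero because $W$ is non-degenerate on the hyperplane $\{w:\sum_i w_i=0\}$. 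The rate $n^{-1/(2\gamma)}$ (rather than $n^{-1/2}$) reflects that the integrand is bounded below by a positive constant on each strip, so the $L^\gamma$-mass scales linearly in the strip width.

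For part (2), the same decomposition combined with a first-order transverse expansion of $fq$ along each $\partial_{ij}$ gives $\sqrt{n}\langle\nabla\varphi_n-\nabla\varphi_*,f\rangle_{L^2(Q)}=\mathbb{L}_n(f)+R_n(f)$, where
\[
\mathbb{L}_n(f)=\sum_{\{i,j\}}\frac{x_i-x_j}{2\|x_i-x_j\|}\,\sqrt{n}\bigl((z_i^{(n)}-z_i^*)-(z_j^{(n)}-z_j^*)\bigr)\int_{\partial_{ij}}f\,q\,d\mathcal{H}^{d-1}.
\]
Substituting the Gaussian limit of $\sqrt{n}(z^{(n)}-z^*)$ defines $\mathbb{G}(f)$ as a linear combination of the bounded surface functionals $f\mapsto\int_{\partial_{ij}}f\,q\,d\mathcal{H}^{d-1}$, so $\mathbb{G}$ is an essentially $N$-dimensional Gaussian element of $(\mathcal{C}^\beta(\Omega';\R^d))'$, for which tightness is automatic.

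The main obstacle is the uniform-in-$f$ control of $R_n(f)$ over balls of $\mathcal{C}^\beta$. Using $|f(y)-f(y_0)|\leq\|f\|_{\mathcal{C}^\beta}|y-y_0|^\beta$ with $y_0$ the projection of $y$ onto $\partial_{ij}$, the per-strip contribution to $\sqrt{n}R_n(f)$ is bounded by $\|f\|_{\mathcal{C}^\beta}\cdot\mathcal{H}^{d-1}(\partial_{ij})\cdot\sqrt{n}\,|\delta_{ij}^{(n)}|^{1+\beta}=\|f\|_{\mathcal{C}^\beta}\,O_P(n^{-\beta/2})$, hence $o_P(1)$ uniformly in Hölder balls. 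This Hölder remainder estimate is exactly what fails in the $L^\gamma(Q)$ topology used in part (1), explaining why a CLT in $L^\gamma(Q)$ cannot hold but one in $(\mathcal{C}^\beta(\Omega';\R^d))'$ can.
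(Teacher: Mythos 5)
Your proposal follows essentially the same route the paper attributes to \cite{Goldfeld.et.al.2024.AoAP}: decompose $\nabla\varphi_n-\nabla\varphi_*$ as $x_i-x_j$ on the cell overlaps $\operatorname{Lag}_i(z^{(n)})\cap\operatorname{Lag}_j(z^*)$, reduce everything to the asymptotics of $Q\bigl(\operatorname{Lag}_i(z^{(n)})\cap\operatorname{Lag}_j(z^*)\bigr)$ and of the integrals $\int_{\operatorname{Lag}_i(z^{(n)})\cap\operatorname{Lag}_j(z^*)}\langle x_i-x_j,f\rangle\,dQ$, and linearize via the facet shift $\delta_{ij}^{(n)}$ together with Theorem~\ref{theorem:barrioSemi}. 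Your coarea-formula/transverse-expansion step is exactly a hands-on computation of the derivative of $z\mapsto\int_{\operatorname{Lag}_i(z^*)\cap\operatorname{Lag}_j(z)}f\,dQ$ at $z^*$, which is the Hadamard differentiability statement the paper says is the crux of the delta-method argument there; so the two proofs are the same idea, with yours making the differential explicit and thereby also producing a closed-form limit $\mathbb{V}_\gamma$. The Hölder remainder bound $\sqrt{n}\,|\delta_{ij}^{(n)}|^{1+\beta}=O_P(n^{-\beta/2})$, uniform over $\mathcal{C}^\beta$-balls, is indeed the right mechanism explaining why part (2) holds in $(\mathcal{C}^\beta(\Omega';\R^d))'$ while no $\sqrt n$-CLT can hold in $L^\gamma(Q)$. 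Two small points to tighten for a full proof: the formula for $\mathbb L_n(f)$ should pair the direction $(x_i-x_j)/\|x_i-x_j\|$ with the vector integral $\int_{\partial_{ij}}fq\,d\mathcal H^{d-1}$ by an inner product rather than juxtaposition; and the strip approximation needs explicit control of the error sources you currently elide (variable strip thickness, edge/corner contributions where three or more cells meet, and the replacement $q(y)\approx q(y_0)$, which relies on uniform continuity of $q$ since it is not assumed Hölder). All of these are $o_P(n^{-1/2})$ for the polyhedral Laguerre geometry, but they must be argued to make the asymptotic equicontinuity/tightness of the sequence in $(\mathcal{C}^\beta(\Omega';\R^d))'$ airtight rather than ``automatic.''
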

Apart from these two points, \cite{Goldfeld.et.al.2024.AoAP} proved the central limit theorem of $\sqrt{n}\langle \nabla \varphi_n -\nabla \varphi_*, f\rangle_{L^{2}(Q)}$ for a fixed test function $f$ with negligible discontinuities w.r.t.\ the $d-1$-dimensional Hausdorff measure.  
Proposition~4 in \cite{Sadhu.Goldfeld.Kato.2024.AoAP.Samidi} uses this result to show that $r_n(\nabla \varphi_n -\nabla \varphi_*)$ does not possess nonzero distributional limits in $L^2(Q)$. Their elegant and short proof uses the Hilbertian structure of $L^2(Q)$. Inspired by that, we derive the same condition in any $L^{\gamma}(Q) $ space for $\gamma\in (1, \infty)$. Assume that $ r_n(\nabla \varphi_n -\nabla \varphi_* )$
has a nonzero limit $\mathbb{U}$ in $L^{\gamma}(Q) $, then $r_n=n^{\frac{1}{\gamma}}$ by the first point of Theorem~\ref{theorem:Kato}.
The continuous mapping theorem yields 
$$ \sup_{\|f\|_{\mathcal{C}^{\beta}(\Omega';\R^d)}\leq 1}n^{\frac{1}{\gamma}}\langle \nabla \varphi_n -\nabla \varphi_*, f\rangle_{L^{2}(Q)} 
    { \underset w \longrightarrow } \sup_{\|f\|_{\mathcal{C}^{\beta}(\Omega';\R^d)}\leq 1}\langle \mathbb{U}, f\rangle_{L^{2}(Q)}, $$
so that, by the second point of Theorem~\ref{theorem:Kato}, we get  $$ \mathbb{P}\left(\langle \mathbb{U}, f\rangle_{L^{2}(Q)}=0, \ \forall f\in \mathcal{C}^{\beta}(\Omega';\R^d)\right)=1. $$
Since $\mathcal{C}^{\beta}(\Omega';\R^d)$ is dense in  $L^{\gamma^*}(Q)$, where $\gamma^*\in (1,\infty)$ is the conjugate exponent of $\gamma$, we get $\mathbb{U}=0 $ in $L^{\gamma}(Q) $.

Let us delve into the details of the proof of \cite{Sadhu.Goldfeld.Kato.2024.AoAP.Samidi}. First, {the reason for these mismatched
rates} among the different $L^\gamma$ norms is due to the relation 
$$ \|\nabla \varphi_n -\nabla \varphi_*\|_{L^{\gamma}(Q)} =\left( \sum_{i,j=1}^K \|x_i-x_j\|^{\gamma}  Q\left({\rm Lag}_i(z^{*}) \cap  {\rm Lag}_j(z^{(n)})\right) \right)^{\frac{1}{\gamma}}, $$
which  implies that  $\|\nabla \varphi_n -\nabla \varphi_*\|_{L^{\gamma}(Q)}$ has the same convergence rate as
$$ \left( \sum_{i\neq j}^K Q\left({\rm Lag}_i(z^{*}) \cap  {\rm Lag}_j(z^{(n)})\right) \right)^{\frac{1}{\gamma}}. $$
Hence, it depends on $\gamma$. Moreover, the limit theorems follow from that of $$Q\left({\rm Lag}_i(z^{*}) \cap  {\rm Lag}_j(z^{(n)})\right).$$ Therefore, the relation
$$  \langle \nabla \varphi_n -\nabla \varphi_*, f\rangle_{L^{2}(Q)} 
    = \sum_{i,j=1}^K \int_{{\rm Lag}_i(z^{*}) \cap  {\rm Lag}_j(z^{(n)})}\langle x_i-x_j, f(y) \rangle  dQ(y), $$
indicates that the limit of the family of  indicator functions $ \mathbb{I}\left[{\rm Lag}_i(z^{*}) \cap  {\rm Lag}_j(z^{(n)})\right]$ integrated w.r.t.\  test functions determines all the limits of Theorem~\ref{theorem:Kato}. The strategy of \cite{Goldfeld.et.al.2024.AoAP} is to show the differentiability of the functional 
$$ z\mapsto  \int_{{\rm Lag}_i(z^{*}) \cap  {\rm Lag}_j(z)}f(y)   dQ(y),$$
at $z^*$  in order to apply the delta-method and Theorem~\ref{theorem:barrioSemi}.

\section{Regularized problems}\label{Section:regularized}
Beyond the cases described in Section~\ref{general_dimension}, estimation of OT (cost, potentials or plans) is affected by the curse of dimensionality. As noted in the Introduction, this issue and computational considerations have motivated the adoption of regularized versions of OT. We devote this section to distributional limit theory in this setup.
\subsection{Entropy regularized optimal transport}
The entropy regularized optimal transport (EOT),  popularized by \cite{Cuturi2013SinkhornDL}, corresponds to  the minimization of the OT cost together with a penalty
\begin{equation}\label{EOT}
     {\cal E}_\epsilon(P,Q)=\min_{\pi \in \Pi(P,Q)}\int_{\R^d\times \R^d} c(x,y) d \pi(  {x},   {y}) +
\epsilon \cdot {\rm H}(\pi\vert P\otimes Q),
\end{equation}
where $ {\rm H}(\pi\vert P\otimes Q)$ denotes the Kullback-Leibler divergence of $\pi$ w.r.t.\ the product probability measure $P\otimes Q$ and  $\epsilon > 0$ balances the weight of the penalty.  The minimizer of \eqref{EOT} is called the Entropic Optimal Transport (EOT) plan while the maximizers $(f_{\epsilon}, g_{\epsilon})$ {in $L^2(P)\times L^2(Q)$}   of the dual functional
\begin{equation} 
\nonumber
\label{ENGdual_entropESP}
 \int \Big\{ f(x) + g(y) 
	-\epsilon \cdot e^{\frac{f(x) + g(y)- c(x,y) }{\epsilon} }+\epsilon\Big\} d (P\otimes Q)(x,y)
\end{equation}
are called EOT potentials. They can be equivalently defined 
as the unique solutions of the Schrödinger system
\begin{align}
\begin{split}
&\int \exp{\Big(\frac{f_{\epsilon}(x)+g_{\epsilon}(y)- c(x,y)}{\epsilon}\Big)}dQ(y)  =1, \ \ \text{$x\in \R^d $},\\
    \label{SrodingerSystem}
	&\int \exp{\Big(\frac{f_{\epsilon}(x)+g_{\epsilon}(y)-c(x,y)}{\epsilon}\Big)}dP(x)  =1, \ \ \text{ $y\in \R^d $}.
\end{split}
\end{align}
EOT is undoubtedly the most popular among all the variations  of OT, mainly due to three factors, (i) the Sinkhorn algorithm \cite{Sinkhorn.1967.AMM} is an iterative fixed point algorithm with linear convergence rates \cite{FRANKLIN.1989.LAA,Carlier.2022.SIOPT} which allows for the efficient computation  of EOT plans, potentials and costs \cite{Cuturi2013SinkhornDL};  (ii) its connections with the Schrödinger bridge \cite{Schrodinger.1932.AIHP} and the theory of large deviations \cite{Leonard.2014.DCDS}  and (iii) it avoids the curse of dimensionality for a fixed regularization parameter $\epsilon>0$ \cite{genevay.et.al.2019.PMLR,Mena.Weed.2019.Nips}.  {
We should also mention here the recent preprint~\cite{mordant.2024.Preprint}, which studies central limit theorems in EOT in the regime where the regularization parameter tends to zero.
}

{Recently, other types of entropy penalties for the OT problem have been proposed in the literature \cite{Muzellec.2017.AAAI,blondel18quadratic,Nutz.24}. These approaches may be particularly applicable in cases where a sparse approximation of the OT plan is desired \cite{GonzalezSanzNutz2024.Sacalar,GonzalezSanzNutzRivero2024.Monotone,zhang.2023.manifoldlearningsparseregularised}.}

{
We do not cover the details of these variations in this survey. However, it is worth mentioning that the statistical complexity was first studied in \cite{Bayraktar.Eckstein.2025.BJ} through the control of dual potential smoothness. Following this approach, \cite{Bayraktar.Eckstein.2025.BJ} obtained convergence rates that suffer from the curse of dimensionality. In contrast, \cite{GonzalezNutzEcsktein.2025, gonzalezsanz2025samplecomplexityquadraticallyregularized} demonstrated, using  different proof techniques, that regularized transport problems do not generally suffer from the curse of dimensionality.}

\subsubsection{Costs}
Proving a CLT requires a control on the regularity of the EOT potentials $(f_{\epsilon}, g_{\epsilon})$. From Equation~\eqref{SrodingerSystem}, we first observe that they inherit the regularity of the cost function. Hence, for the squared-Euclidean cost, the EOT potentials are $\mathcal{C}^\infty$. Moreover, for $P$ and $Q$ supported on compact sets $\Omega$ and $\Omega'$,  the derivatives of $(f_{\epsilon}, g_{\epsilon})$ are bounded by a constant depending on the diameters of $\Omega$, $\Omega'$ and the regularization parameter $\epsilon$ (subgaussian tails yield the same result, cf.~\cite{genevay.et.al.2019.PMLR,Mena.Weed.2019.Nips}). That is, both the empirical $(f_{\epsilon,n}, g_{\epsilon,n})$ and population $(f_{\epsilon}, g_{\epsilon})$  EOT potentials belong to a ball in $\mathcal{C}^s(\Omega)\times\mathcal{C}^s(\Omega') $ of radius independent of the sample size, for all $s\in \N$. Hence the random sequence  $(f_{\epsilon,n}, g_{\epsilon,n})$ belongs to a Donsker class. In  \cite{Mena.Weed.2019.Nips,genevay.et.al.2019.PMLR} these estimates have been used to show that 
$$ \E[| {\cal E}_\epsilon(P_n,Q)-{\cal E}_\epsilon(P,Q)|] \leq   C\, n^{-\frac{1}{2}}$$
for smooth costs.

In the same work, it is shown that in the bias-variance decomposition of ${\cal E}_\epsilon(P_n,Q)- {\cal E}_\epsilon(P,Q)$, the variance term ${\cal E}_\epsilon(P_n,Q) -\E[{\cal E}_\epsilon(P_n,Q)]$ can be handled with the Efron-Stein linearization of \cite{delBarrio.Loubes.2019.AoP.CLTquadratic}, leading to  
$$ \sqrt{n}({\cal E}_\epsilon(P_n,Q) -\E[{\cal E}_\epsilon(P_n,Q)]) { \underset w \to } \mathcal{N}(0,{\rm Var}_P(f_\epsilon)).$$
Additionally, \cite{delBarrio.et.al.2023.SIMODS.Improved} showed that the bias term $|\E[{\cal E}_\epsilon(P_n,Q)]-{\cal E}_\epsilon(P,Q)|$ converges to zero faster than the variance, again with arguments based on the uniform smoothness of the potentials. As a consequence, the following CLT holds.
\begin{theorem}\label{sinkhorn_CLT}
Assume that $P$ and $Q$ are subgaussian probabilities on $\mathbb{R}^d$ and $c(x,y)=\|x-y\|^2$. Then 
$$\sqrt{n}({\cal E}_\epsilon(P_n,Q) -{\cal E}_\epsilon(P,Q)) { \underset w \to }  \mathcal{N}(0,{\rm Var}_P(f_\epsilon)),$$
where $f_\epsilon$ is an  EOT potential.
\end{theorem}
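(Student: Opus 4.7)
The plan follows the bias-variance decomposition sketched in the paragraphs preceding the theorem. Write
\begin{equation*}
\sqrt{n}({\cal E}_\epsilon(P_n,Q) - {\cal E}_\epsilon(P,Q)) = \sqrt{n}({\cal E}_\epsilon(P_n,Q) - \E[{\cal E}_\epsilon(P_n,Q)]) + \sqrt{n}(\E[{\cal E}_\epsilon(P_n,Q)] - {\cal E}_\epsilon(P,Q)),
\end{equation*}
and show the variance term converges to $\mathcal{N}(0,\mathrm{Var}_P(f_\epsilon))$ while the bias term vanishes, concluding by Slutsky. The workhorse throughout is the $\mathcal{C}^\infty$ regularity of the EOT potentials for the squared-Euclidean cost, with derivative bounds depending only on $\epsilon$ and the subgaussian parameters of $P,Q$, so that both $f_\epsilon$ and the empirical potentials $f_{\epsilon,n}$ lie in a common $\mathcal{C}^s$-uniformly bounded (hence Donsker) class.

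For the variance, the natural linearization is $\frac{1}{n}\sum_i f_\epsilon(X_i)$: the classical CLT yields $\sqrt{n}\int f_\epsilon \, d(P_n - P) \underset{w}{\longrightarrow} \mathcal{N}(0,\mathrm{Var}_P(f_\epsilon))$, using $f_\epsilon \in L^2(P)$ from the polynomial growth of the potentials and subgaussianity of $P$. It remains to prove
\begin{equation*}
n \, \mathrm{Var}\!\left({\cal E}_\epsilon(P_n,Q) - \frac{1}{n}\sum_{i=1}^n f_\epsilon(X_i)\right) \longrightarrow 0,
\end{equation*}
which I would do via Efron-Stein: after replacing $X_i$ by an independent copy, the increment can be sandwiched using the dual-optimality of $(f_{\epsilon,n},g_{\epsilon,n})$ against the feasible pair $(f_\epsilon,g_\epsilon)$, yielding a bound of order $n^{-1}|f_{\epsilon,n}(X_i) - f_\epsilon(X_i)|$ (plus the symmetric term for $X_i'$). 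The resulting Efron-Stein estimate is $\lesssim \E[|f_{\epsilon,n}(X_1) - f_\epsilon(X_1)|^2]$, which vanishes by the $L^2(P)$-consistency of empirical EOT potentials, itself a consequence of their uniform smoothness combined with an Arzelà-Ascoli and uniqueness argument.

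For the bias, I would exploit the Schrödinger system \eqref{SrodingerSystem}: the first identity holds pointwise on $\spt{P}$, so $\int e^{(f_\epsilon+g_\epsilon-c)/\epsilon}\,d(P_n\otimes Q)=1$ almost surely, hence $(f_\epsilon,g_\epsilon)$ is feasible in the empirical dual with vanishing penalty correction and ${\cal E}_\epsilon(P_n,Q) \geq \int f_\epsilon\,dP_n + \int g_\epsilon\,dQ$; taking expectations already gives $\E[{\cal E}_\epsilon(P_n,Q)] \geq {\cal E}_\epsilon(P,Q)$. For the reverse bound, plug $(f_{\epsilon,n},g_{\epsilon,n})$ into the population dual and Taylor-expand the exponential penalty around the empirical Schrödinger identity; this converts the bias into a quadratic expression in the potential differences whose expectation is $O(1/n)$ thanks to uniform higher-order regularity estimates. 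The main obstacle is precisely this bias step, where one must beat the $1/\sqrt{n}$ fluctuation rate: the delicate propagation of uniform Sobolev-type bounds for $f_{\epsilon,n}$ in the unbounded subgaussian setting is what distinguishes the improved argument of \cite{delBarrio.et.al.2023.SIMODS.Improved} from the earlier compactly-supported analyses.
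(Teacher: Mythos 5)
Your decomposition --- bias plus Efron--Stein-linearized fluctuation, with uniform $\mathcal{C}^s$-regularity of the EOT potentials doing the heavy lifting on both sides --- is exactly the route the paper sketches before the theorem (citing \cite{Mena.Weed.2019.Nips,genevay.et.al.2019.PMLR} for the expectation bound, \cite{delBarrio.Loubes.2019.AoP.CLTquadratic} for the Efron--Stein step, and \cite{delBarrio.et.al.2023.SIMODS.Improved} for the improved bias control), and the variance half of your argument is correct as written. One correction to the bias step: when you plug $(f_{\epsilon,n},g_{\epsilon,n})$ into the population dual, the penalty term is \emph{exactly} $\epsilon$ --- the first Schr\"odinger identity satisfied by the empirical potentials holds pointwise for every $x\in\R^d$, not only on the sample, so $\int e^{(f_{\epsilon,n}+g_{\epsilon,n}-c)/\epsilon}\,d(P\otimes Q)=1$ and there is nothing to Taylor-expand; what remains is $\E\bigl[\int (f_{\epsilon,n}-f_\epsilon)\,d(P_n-P)\bigr]$, and the quadratic $O(1/n)$ bound comes from strong concavity of the full dual around its maximizer (the Rigollet--Str\"omme / \cite{delBarrio.et.al.2023.SIMODS.Improved} second-order argument), not from an expansion of the penalty alone.
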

A different approach to obtain the same result  was proposed by \cite{Goldfeld.et.al.2022.statisticalinferenceregularizedoptimal} using  that  $(f_{\epsilon,n}, g_{\epsilon,n})$ belongs to a Donsker class and employing the functional Delta method, yet under the restrictive assumption that the distributions are compactly supported.  In the discrete setting the same technique also provides the central limit theorems, cf.~\cite{klatt.et.al.2020.SIMODS,Hundrieser.et.al2021.AoAP}. This last technique, even if effective for dealing with a wider range of problems, does not provide, in general, the sharpest conditions as  illustrated later by \cite{Rigollet.Strome.2022.AOSappear}. Avoiding the use of empirical processes theory and exploiting the strong convexity of the dual formulation of EOT,  \cite{Rigollet.Strome.2022.AOSappear} derived the same bound of the bias term as \cite{delBarrio.et.al.2023.SIMODS.Improved}.  {This approach}  was used by \cite{GonzalezSanz.Hundrieser..2023.Beyond} to derive the CLT for the EOT with general cost functions under more general assumptions, as follows. 
\begin{theorem}
    Assume that the cost function $c:\R^d\times \R^d\to \R$ is bounded and measurable in the support of $P\otimes Q$. Then it holds that 
    $$ \sqrt{n}({\cal E}_\epsilon(P_n,Q) -{\cal E}_\epsilon(P,Q)) { \underset w \to } \mathcal{N}(0,{\rm Var}_P(f_\epsilon)).$$
\end{theorem}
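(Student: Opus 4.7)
The plan is to follow the strategy of \cite{Rigollet.Strome.2022.AOSappear} as extended in \cite{GonzalezSanz.Hundrieser..2023.Beyond}: a bias--variance decomposition reduces the CLT to the fluctuation $\sqrt{n}(P_n-P)f_\epsilon$, and both the bias and the linearization error are controlled using only the strong concavity of the dual functional, without any regularity or Donsker-type argument on the cost.

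I would first observe that the boundedness of $c$ together with the Schr\"odinger system \eqref{SrodingerSystem} yields, after the standard normalization, a uniform a priori bound $\|f_\epsilon\|_\infty + \|g_\epsilon\|_\infty \le C(\|c\|_\infty,\epsilon)$ which also applies to the empirical potentials $(f_{\epsilon,n},g_{\epsilon,n})$ once they are extended to all of $\R^d$ via the first Schr\"odinger equation. In particular $\mathrm{Var}_P(f_\epsilon)<\infty$, so that $\sqrt{n}(P_n-P)f_\epsilon$ already converges to $\mathcal{N}(0,\mathrm{Var}_P(f_\epsilon))$ by the classical one-dimensional CLT.

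The next step is a sandwich inequality. Writing $\Psi(\mu,\nu;f,g)$ for the dual objective in \eqref{EOT}, the identity $\int e^{(f_\epsilon(x)+g_\epsilon(y)-c(x,y))/\epsilon}\,dQ(y)=1$ (valid for every $x\in\R^d$ after the extension) cancels the exponential term in $\Psi(P_n,Q;f_\epsilon,g_\epsilon)-\Psi(P,Q;f_\epsilon,g_\epsilon)$, and analogously for $(f_{\epsilon,n},g_{\epsilon,n})$. Combined with the optimality inequalities $\mathcal{E}_\epsilon(P_n,Q)\ge\Psi(P_n,Q;f_\epsilon,g_\epsilon)$ and $\mathcal{E}_\epsilon(P,Q)\ge\Psi(P,Q;f_{\epsilon,n},g_{\epsilon,n})$, this produces
$$
(P_n-P)f_\epsilon \;\le\; \mathcal{E}_\epsilon(P_n,Q)-\mathcal{E}_\epsilon(P,Q) \;\le\; (P_n-P)f_{\epsilon,n}.
$$
The theorem will then follow from Slutsky as soon as I establish $\sqrt{n}(P_n-P)(f_{\epsilon,n}-f_\epsilon)\to 0$ in probability.

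The main obstacle is exactly this linearization error. Exploiting the Hessian of $\Psi(P,Q;\cdot,\cdot)$, whose exponential factor $e^{(f+g-c)/\epsilon}$ is bounded above and below thanks to the uniform bounds on $c$ and on the potentials, one gets the $\epsilon$-strong concavity estimate
$$
\|f_{\epsilon,n}-f_\epsilon\|_{L^2(P)}^2 + \|g_{\epsilon,n}-g_\epsilon\|_{L^2(Q)}^2 \;\lesssim\; \mathcal{E}_\epsilon(P,Q)-\Psi(P,Q;f_{\epsilon,n},g_{\epsilon,n}).
$$
Rewriting the right-hand side via the sandwich identities turns this into the self-bounding inequality $\|f_{\epsilon,n}-f_\epsilon\|_{L^2(P)}^2 \lesssim (P_n-P)(f_{\epsilon,n}-f_\epsilon)$, whose right-hand side is then handled by Efron--Stein applied to $(X_1,\dots,X_n)\mapsto f_{\epsilon,n}$: the same strong concavity shows that replacing a single sample alters $f_{\epsilon,n}$ by $O(1/n)$ in $L^\infty$, which upgrades the stability bound to the parametric rate $\E\|f_{\epsilon,n}-f_\epsilon\|_{L^2(P)}^2=O(1/n)$ and, by a second application to $(P_n-P)(f_{\epsilon,n}-f_\epsilon)$, delivers the desired $o_P(n^{-1/2})$ bound. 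Taking expectations in the sandwich likewise gives $|\E\mathcal{E}_\epsilon(P_n,Q)-\mathcal{E}_\epsilon(P,Q)|\le \E[(P_n-P)(f_{\epsilon,n}-f_\epsilon)]=o(n^{-1/2})$, so the bias vanishes at a faster-than-parametric rate and the CLT follows.
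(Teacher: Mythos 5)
Your overall strategy is the one behind the result in the paper: the sandwich inequality derived from the Schr\"odinger system, strong concavity of the dual, and a stability argument that avoids any Donsker-class machinery. The sandwich itself is correct, as is the self-bounding inequality $\|f_{\epsilon,n}-f_\epsilon\|_{L^2(P)}^2\lesssim (P_n-P)(f_{\epsilon,n}-f_\epsilon)$, the one-observation $L^\infty$-stability of order $1/n$, and the resulting bound $\E[(P_n-P)(f_{\epsilon,n}-f_\epsilon)]=O(1/n)$ via a swap argument.

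The gap is in the very last step. You write that ``a second application [of Efron--Stein] to $(P_n-P)(f_{\epsilon,n}-f_\epsilon)$ delivers the desired $o_P(n^{-1/2})$ bound,'' but Efron--Stein applied to $Z:=(P_n-P)(f_{\epsilon,n}-f_\epsilon)$ does not give that. Writing $Z-Z^{(i)}=\tfrac1n[(f_{\epsilon,n}-f_\epsilon)(X_i)-(f_{\epsilon,n}-f_\epsilon)(X_i')]+(P_n^{(i)}-P)(f_{\epsilon,n}-f_{\epsilon,n}^{(i)})$, the second term is only $O(1/n)$ deterministically (the $O(1/n)$-$L^\infty$ stability cannot be improved against a total-variation-type pairing), so Efron--Stein yields $\mathrm{Var}(Z)\lesssim 1/n$ and hence $Z=O_P(n^{-1/2})$, one order short of what is needed to conclude.

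What actually closes the argument --- and what makes the sandwich so convenient --- is that the remainder is \emph{nonnegative}. From the sandwich itself, $0\le \mathcal{E}_\epsilon(P_n,Q)-\mathcal{E}_\epsilon(P,Q)-(P_n-P)f_\epsilon\le Z$, or equivalently from your self-bounding inequality $Z\gtrsim \|f_{\epsilon,n}-f_\epsilon\|_{L^2(P)}^2\ge 0$. Combined with $\E Z=O(1/n)$, Markov's inequality gives $Z=O_P(1/n)=o_P(n^{-1/2})$, which is exactly what is needed and is the step your proposal leaves implicit. Once you replace the ``second Efron--Stein'' clause with this observation, the proof is complete and matches the route taken in the cited work.
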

\subsubsection{Potentials and Plans}
As stated previously, the empirical EOT potentials are the unique solutions (up to additive constants) of the empirical version of the Schrödinger system \eqref{SrodingerSystem}, that is, the version in which we replace $P$ with $P_n$. It is convenient at this point to introduce the notation
$$ \Gamma(f,g):=\Big(\int \exp{\Big(\frac{f(x)+g(y)- c(x,y)}{\epsilon}\Big)}dQ(y),\int \exp{\Big(\frac{f(x)+g(y)-c(x,y)}{\epsilon}\Big)}dP(x)\Big),$$
and, similarly, we write $\Gamma_n$ for the empirical version.
The empirical EOT potentials are then defined through the equation $\Gamma_n(f_{\epsilon,n},g_{\epsilon,n})=(1,1)$. This definition enables to reframe the estimation of EOT potentials as a  $Z$-estimation problem in the nomenclature of empirical processes. In this framework, CLTs are {obtained} using the implicit function theorem in Banach spaces.  This approach was employed by \cite{GonzalezSanz.Loubes.Weed.2024.weaklimits,Goldfeld.et.al.2024.EJS}  to prove a CLT for the EOT potentials for $\mathcal{C}^\infty$ costs.  The derivative of the Schrödinger operator $\Gamma$ is of the form  $\mathbb{L}=I+\mathcal{A}$ where $\mathcal{A}$ is a compact operator in $\mathcal{C}(\Omega) \times \mathcal{C}(\Omega') $. The first nontrivial eigenvalue of $\mathcal{A}$ is $-1$, which has multiplicity one and is generated by the constant function $(1,-1) $. Any other eigenvalue of  $\mathcal{A}$ is bounded away from one by Jensen's inequality. Hence, the Fredholm alternative yields the invertibility of the linearized operator $\mathbb{L}$ (see \cite{Carlier.Laborde.SIMA.2020}).\\
Actually,  their approach can be extended to Lipschitz costs by means of the following observation.  
Up to a term of order
$  o_{\mathbb{P}}(\|f_\epsilon\oplus g_\epsilon-f_{\epsilon,n}\oplus g_{\epsilon,n}\|_\infty), $
the expansion 
$$ \Gamma(f_\epsilon,g_\epsilon)-\Gamma(f_{\epsilon,n},g_{\epsilon,n}) = \mathbb{L}(f_\epsilon-f_{\epsilon,n},g_\epsilon-g_{\epsilon,n}) $$
holds. Now, proving a CLT for the left-hand side of the previous equation is enough to find the limiting distribution of the EOT potentials. This follows from the fact that the random function    $ e^{\frac{-c(X_i,\cdot)}{\epsilon}}$ satisfies a CLT in $\mathcal{C}(\Omega)$ for any Lipschitz cost (see \cite{LedouxTalagrand1991} for a discussion on the conditions where a uniform central limit theorem holds). We refer to \cite{GonzalezSanz.Loubes.Weed.2024.weaklimits} for the description of the covariance of the Gaussian process in the following theorem.  
\begin{theorem}\label{theorem:CLT-Potential-EOT}
    Let $P$ and $Q$ be supported on the compact sets $\Omega$  and $\Omega'$. Assume that the cost function is Lipschitz. Then there exists a  centered Gaussian process $\mathbb{G} $  in $\Omega \times\Omega'  $ with continuous sample paths such that  
    $$ \sqrt{n}(f_\epsilon-f_{\epsilon,n},g_\epsilon-g_{\epsilon,n}){ \underset w \to } \mathbb{G} $$
    in $\mathcal{C}(\Omega \times\Omega')$. If the cost is $\mathcal{C}^s$ then the limit holds in $\mathcal{C}^{s-1}(\Omega \times\Omega')$. 
\end{theorem}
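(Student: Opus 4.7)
The plan is to treat the empirical potentials as a $Z$-estimator: $(f_{\epsilon,n}, g_{\epsilon,n})$ is the unique (up to the additive-constant ambiguity $(f,g) \sim (f+c,g-c)$) solution to $\Gamma_n(f,g) = (1,1)$, where $\Gamma_n$ is the empirical analogue of the Schrödinger operator $\Gamma$ defined just above the theorem. Working in the quotient Banach space $\mathcal{C}(\Omega) \times \mathcal{C}(\Omega')/\sim$, I would first verify consistency $\|f_{\epsilon,n} - f_\epsilon\|_\infty + \|g_{\epsilon,n} - g_\epsilon\|_\infty = o_{\mathbb{P}}(1)$ after fixing a normalisation. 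This follows from the uniform smoothness of EOT potentials recalled before Theorem~\ref{sinkhorn_CLT} (in our Lipschitz case both empirical and population potentials inherit a Lipschitz bound from the cost), which places them in a fixed compact subset of $\mathcal{C}(\Omega)\times\mathcal{C}(\Omega')$, together with a uniform law of large numbers for $\Gamma_n - \Gamma$ on this Glivenko–Cantelli class and the injectivity of $\Gamma$ at the true solution.

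The second step is the Fréchet expansion announced in the excerpt. Using $\Gamma(f_\epsilon, g_\epsilon) = \Gamma_n(f_{\epsilon,n}, g_{\epsilon,n}) = (1,1)$, I can write
\begin{align*}
\mathbb{L}(f_\epsilon - f_{\epsilon,n},\, g_\epsilon - g_{\epsilon,n})
&= \Gamma(f_\epsilon, g_\epsilon) - \Gamma(f_{\epsilon,n}, g_{\epsilon,n}) + o_{\mathbb{P}}(\|\Delta\|_\infty) \\
&= (\Gamma_n - \Gamma)(f_{\epsilon,n}, g_{\epsilon,n}) + o_{\mathbb{P}}(\|\Delta\|_\infty),
\end{align*}
where $\|\Delta\|_\infty := \|f_\epsilon \oplus g_\epsilon - f_{\epsilon,n} \oplus g_{\epsilon,n}\|_\infty$. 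A stochastic-equicontinuity argument over the Donsker class of admissible potentials lets me replace the argument $(f_{\epsilon,n}, g_{\epsilon,n})$ by $(f_\epsilon, g_\epsilon)$ in the last line up to an $o_{\mathbb{P}}(n^{-1/2})$ term. Inverting $\mathbb{L} = I + \mathcal{A}$ via the Fredholm alternative as sketched in the excerpt (the spectrum of $\mathcal{A}$ is bounded away from $-1$ on the quotient by Jensen's inequality, cf.~\cite{Carlier.Laborde.SIMA.2020}), one obtains the Bahadur-type representation
$$\sqrt{n}(f_\epsilon - f_{\epsilon,n},\, g_\epsilon - g_{\epsilon,n}) \;=\; \mathbb{L}^{-1}\bigl(\sqrt{n}(\Gamma_n - \Gamma)(f_\epsilon, g_\epsilon)\bigr) + o_{\mathbb{P}}(1).$$

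It remains to establish a functional CLT for $\sqrt{n}(\Gamma_n - \Gamma)(f_\epsilon, g_\epsilon)$ in $\mathcal{C}(\Omega \times \Omega')$. Since only the component integrating against $P_n$ is random, this reduces to a CLT in $\mathcal{C}(\Omega')$ for the empirical process indexed by $y$ through the random functions $e^{(g_\epsilon(y)-c(X_i,y))/\epsilon}e^{f_\epsilon(X_i)/\epsilon}$. For Lipschitz $c$ this class is uniformly Lipschitz in $y$ on the compact $\Omega'$, hence Donsker by standard bracketing/metric-entropy estimates in the spirit of \cite{LedouxTalagrand1991}. Applying the continuous operator $\mathbb{L}^{-1}$ on the quotient yields the centered Gaussian limit $\mathbb{G}$ with continuous paths, whose covariance can be read off from that of the driving process as in \cite{GonzalezSanz.Loubes.Weed.2024.weaklimits}. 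For the $\mathcal{C}^s$ refinement, differentiating the Schrödinger equations up to order $s-1$ propagates regularity to the score functions (each derivative in $x$ or $y$ brings down a bounded factor coming from $c$), and the empirical process is Donsker in $\mathcal{C}^{s-1}$ by the same entropy calculation applied to derivatives; $\mathbb{L}^{-1}$ remains continuous in this finer topology.

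The main obstacle I expect is the uniform second-order control needed to upgrade the Fréchet expansion from pointwise to $o_{\mathbb{P}}(\|\Delta\|_\infty)$ in the supremum norm (and, for the $\mathcal{C}^s$ part, in the $\mathcal{C}^{s-1}$ norm). The exponential form of $\Gamma$ yields quadratic remainders, but with constants that blow up with the diameter of the supports and with $1/\epsilon$; closing the argument requires an \emph{a priori} $\sqrt{n}$-rate bound on $\|\Delta\|_\infty$, which in turn needs the invertibility of $\mathbb{L}$, producing a small circularity that must be resolved by a localisation argument anchored on the consistency established in the first step.
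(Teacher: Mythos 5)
Your proposal follows exactly the route sketched in the paper: reframe the Schrödinger system as a $Z$-estimation problem, linearize $\Gamma$ via $\mathbb{L}=I+\mathcal{A}$ and invert on the quotient by the Fredholm alternative, and reduce the limit law to a Donsker-class CLT for the process driven by $e^{-c(X_i,\cdot)/\epsilon}$. Your explicit treatment of the stochastic-equicontinuity step (replacing the random argument $(f_{\epsilon,n},g_{\epsilon,n})$ by $(f_\epsilon,g_\epsilon)$ in the centered process) and of the localisation needed to break the circularity in the Bahadur representation are details the paper leaves implicit, but they are exactly what is required and pose no genuine obstacle on compact supports with fixed~$\epsilon$.
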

From this result a CLT for EOT plans can be established through the asymptotic analysis of the dual potentials and the  relation
\[
d\pi_\epsilon(x,y) = \exp\Big({\frac{f_{\epsilon}(x) + g_{\epsilon}(y) - c(x,y)}{\epsilon}}\Big) \, dP(x) \, dQ(y)
\]
and its empirical counterpart.
The first work showing central limit theorems for EOT plans was   \cite{Harchaoui.Liu.Pal.2020.BJ.} for a different estimator,     where the authors conjectured the validity of the result for the plug-in estimator $\pi_{n,\epsilon}$ and any bounded cost function  (cf.~Remark~1, ibid). 
In \cite{GonzalezSanz.Loubes.Weed.2024.weaklimits,Goldfeld.et.al.2024.EJS}, the conjecture was proven  for smooth cost functions. Finally,  \cite{GonzalezSanz.Hundrieser..2023.Beyond} relaxed the regularity assumption on the cost, proving the whole conjecture of \cite{Harchaoui.Liu.Pal.2020.BJ.}. However, this generalization introduces significant technical challenges, as the linearization of the empirical Schrödinger system  { must now be carried out within} the sequence of Hilbert spaces \( L^2_0(P_n) \), consisting of \( P_n \)-centered elements in \( L^2(P_n) \). A key idea in their approach is to express the derivative of \( \Gamma_n \), which takes the form \( I + \mathcal{A}_n \), through its series expansion
$ \sum_{k=1}^{\infty} (-\mathcal{A}_n)^k.$ 
Since the operator norm of \( \mathcal{A}_n \) is uniformly bounded by a constant, strictly smaller than one, this expansion provides a powerful tool for the analysis. In particular, it enables us to interpret the difference \( (f_\epsilon - f_{\epsilon,n}, g_\epsilon - g_{\epsilon,n}) \)  as an infinite-order \( V \)-statistic, facilitating the derivation of the central limit theorem in this more general setting.  
\begin{theorem}
 Fix $\eta:\R^d\times \R^d\to \R$.    Assume that the cost function $c:\R^d\times \R^d\to \R$ and $\eta$ are  measurable in the support of $P\otimes Q$.  Then it holds that 
    $$ \sqrt{n}\int \eta d(\pi_{\epsilon,n}-\pi_\epsilon)
    { \underset w \to } \mathcal{N}(0,\sigma^2_{\lambda}(\eta)).$$
\end{theorem}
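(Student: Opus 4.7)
The plan is to apply a delta-method argument in the spirit of Theorem~\ref{theorem:CLT-Potential-EOT}, exploiting the explicit density formula
\[\frac{d\pi_\epsilon}{d(P\otimes Q)}(x,y)=\exp\Big(\frac{f_\epsilon(x)+g_\epsilon(y)-c(x,y)}{\epsilon}\Big),\]
and its empirical counterpart, in order to reduce $\sqrt{n}\int\eta\,d(\pi_{\epsilon,n}-\pi_\epsilon)$ to an asymptotically linear statistic in $P_n-P$ and then invoke the classical CLT for sums of i.i.d.\ variables.

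First I would split the fluctuation into two pieces by adding and subtracting $\int \eta\, e^{(f_\epsilon\oplus g_\epsilon-c)/\epsilon}\,d(P_n\otimes Q)$. This isolates a \textbf{marginal part} of the form $\int\varphi\,d(P_n-P)$ with
\[\varphi(x):=\int \eta(x,y)\exp\Big(\frac{f_\epsilon(x)+g_\epsilon(y)-c(x,y)}{\epsilon}\Big)dQ(y),\]
which is bounded and measurable under the hypotheses and therefore satisfies a plain CLT; plus a \textbf{potential part}
\[\int\eta\,\Big(e^{(f_{\epsilon,n}\oplus g_{\epsilon,n}-c)/\epsilon}-e^{(f_\epsilon\oplus g_\epsilon-c)/\epsilon}\Big)d(P_n\otimes Q).\]
Taylor-expanding the exponential and using that the potentials are uniformly bounded yields a principal contribution linear in $(f_{\epsilon,n}-f_\epsilon,g_{\epsilon,n}-g_\epsilon)$, together with a quadratic remainder controlled by the squared $L^2_0(P_n)\times L^2_0(Q)$-norm of the same difference; the latter will be $o_\mathbb{P}(n^{-1/2})$ as soon as the potentials fluctuate at the parametric rate.

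The main obstacle, and the core of the argument, is linearizing the potentials themselves in the low-regularity regime: since $c$ is only bounded measurable, the smooth-cost Theorem~\ref{theorem:CLT-Potential-EOT} does not apply and one cannot work in $\mathcal{C}(\Omega)\times\mathcal{C}(\Omega')$. Following the strategy of \cite{GonzalezSanz.Hundrieser..2023.Beyond}, I would realize the derivative of the Schrödinger operator as $I+\mathcal{A}_n$ on the Hilbert space $L^2_0(P_n)\times L^2_0(Q)$, where a Jensen-type estimate applied to \eqref{SrodingerSystem} yields an operator-norm bound $\|\mathcal{A}_n\|\leq \rho<1$ uniformly in $n$. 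The Neumann series then provides the expansion
\[(f_{\epsilon,n}-f_\epsilon,\,g_{\epsilon,n}-g_\epsilon)=\sum_{k\geq 0}(-\mathcal{A}_n)^k\big(\Gamma_n(f_\epsilon,g_\epsilon)-(1,1)\big)+o_\mathbb{P}(n^{-1/2}),\]
and since $\Gamma_n(f_\epsilon,g_\epsilon)-(1,1)=(\Gamma_n-\Gamma)(f_\epsilon,g_\epsilon)$ is itself a linear functional of $P_n-P$, the whole expression becomes a convergent infinite-order $V$-statistic in the sample. The contractivity $\rho<1$ is essential here, since it keeps the series summable uniformly in $n$ and renders the higher-order $V$-statistic terms negligible.

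Putting the pieces together, $\sqrt{n}\int\eta\,d(\pi_{\epsilon,n}-\pi_\epsilon)$ admits an asymptotically linear representation $n^{-1/2}\sum_{i=1}^n\Psi(X_i)+o_\mathbb{P}(1)$ for a bounded measurable influence function $\Psi$ built from $\varphi$ plus the first-order contribution of the Neumann series acting on $e^{-c(X_i,\cdot)/\epsilon}$. The standard CLT then yields asymptotic normality, and $\sigma_\lambda^2(\eta)=\mathrm{Var}_P(\Psi)$ identifies the limiting variance. The delicate points are therefore the uniform spectral gap for $\mathcal{A}_n$ and the quadratic remainder control in the Taylor expansion of the exponential; both are quantitative, Hilbertian refinements of the arguments already used in the CLTs for the cost and the potentials.
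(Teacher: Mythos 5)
Your proposal follows essentially the same route the paper sketches from \cite{GonzalezSanz.Hundrieser..2023.Beyond}: reduce the plan fluctuation to the potential fluctuation through the exponential density of $\pi_\epsilon$, linearize the empirical Schr\"odinger system in the Hilbert spaces $L^2_0(P_n)\times L^2_0(Q)$ where the derivative is of the form $I+\mathcal{A}_n$, and exploit the uniform operator-norm bound $\|\mathcal{A}_n\|\le\rho<1$ together with the Neumann series to recast the fluctuation as an infinite-order $V$-statistic. The explicit split into a ``marginal part'' and a ``potential part'' is just a worked-out version of what the paper summarizes as combining the CLT for potentials with the density relation $d\pi_\epsilon=e^{(f_\epsilon\oplus g_\epsilon-c)/\epsilon}\,d(P\otimes Q)$.
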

The variance of the limit described in the previous result has a technical expression and can be found in \cite{GonzalezSanz.Hundrieser..2023.Beyond,GonzalezSanz.Loubes.Weed.2024.weaklimits,Harchaoui.Liu.Pal.2020.BJ.}, see also \cite{klatt.et.al.2020.SIMODS,Hundrieser.et.al2021.AoAP} for the discrete setting.  \\
Applications of the previous central limit theorems for the potentials include the derivation of smoothness and  convergence rates of Gaussian processes indexed by distributions using a kernel based on the entropic transportation cost, as defined in \cite{bachoc.2023.Improved,Bachoc.2023.AISTATS}. They also help provide confidence bands for the Sinkhorn cost as initially proposed by \cite{Cuturi2013SinkhornDL}, limits for the entropic optimal transport map \cite{Seguy.et.al.2018.ICLR} and optimal transport-based colocalization curves \cite{klatt.et.al.2020.SIMODS}. {In the next section, we will study the behavior of the plug-in estimator for Sinkhorn divergence. We will see how this can be used for statistical inference.}

\subsubsection{Sinkhorn Divergence}
As noted in the Introduction, the OT cost (for some choices of cost functions) induces a distance over the space of distributions, making it a powerful tool for data analysis and statistics. For instance, goodness-of-fit  and independence tests have been proposed using this property \cite{GonzlezDelgado.et.al.2023.EJS,Hallin.Mordant.Segers.2021.EJS}. However, as we have seen earlier, the curse of dimensionality hinders its use in general dimensions, thus reducing its applicability. The EOT cost with Euclidean squared cost seems a natural substitute but it does not define a distance, nor even a divergence, between probability measures. It does not  satisfy the triangle inequality and the property that it equals zero if and only if the measures are identical. A natural remedy is  proposed by \cite{genevay.Peyre.Cuturi.2018.AISTATs} where they define  the Sinkhorn divergence as follows
$${\rm D}_\epsilon(P,Q) = \mathcal{E}_\epsilon(P,Q) - \frac{1}{2}(\mathcal{E}_\epsilon(P,P) + \mathcal{E}_\epsilon(Q,Q)), $$
with the cost function being  squared Euclidean distance.  
Although the Sinkhorn divergence does not satisfy the triangle inequality, {it  is symmetric, nonnegative  and   equal to zero if and only if \( P = Q \), }  cf.~\cite{Feydy.et.al.2019.PMLR}. This makes it a natural substitute for the Wasserstein distance in high-dimensional data analysis problems. The results obtained by \cite{GonzalezSanz.Loubes.Weed.2024.weaklimits,Goldfeld.et.al.2024.EJS} enable the construction of confidence bands for the Sinkhorn divergence under the two hypotheses \( H_0 : P = Q \) and \( H_1 : P \neq Q \). The test under the alternative hypothesis can be easily derived from the results for the cost. Under the null hypothesis a second-order analysis is required, where the central limit theorem for the potentials plays a key role. Here, we present the result in the form described in \cite{GonzalezSanz.Loubes.Weed.2024.weaklimits}. For a description of the limiting variances we refer to \cite{GonzalezSanz.Hundrieser..2023.Beyond,GonzalezSanz.Loubes.Weed.2024.weaklimits} for the continuous case and \cite{klatt.et.al.2020.SIMODS,Hundrieser.et.al2021.AoAP} for the discrete case.  
   
\begin{theorem}
    Let the cost function be the squared Euclidean distance. Then it holds that  
    \begin{enumerate}
        \item for $P\neq Q$, 
       $$
    \sqrt{n}({\rm D}_\epsilon(P_n,Q_n)- {\rm D}_\epsilon(P,Q)) \underset{w}{\to } \mathcal{N}(0, \sigma^2(P,Q)),
$$
        \item and for $P=Q$, 
        \begin{equation*}
         n\,{\rm D}_\epsilon(P_n,P)\underset{w}{\to } \frac{\epsilon}{2}\sum_{i=1}^{\infty}\lambda_i^2 N_i^2,
   \end{equation*}
where $ \{N_i\}_{i\in \N}$ is a sequence of i.i.d.\ random variables with $N_i\sim  N(0,1)$ and   $\{\lambda_{i}\}_{i\in \N}\subset [0, \infty)$ is such that $\sum_{i=1}^\infty\lambda_{i}^2< \infty. $ 
    \end{enumerate}

\end{theorem}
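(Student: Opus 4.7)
The plan is to linearize $\mathcal{E}_\epsilon$ around each of the three configurations appearing in $D_\epsilon$ through the dual formulation and the envelope theorem, expressing the ingredients asymptotically in terms of the EOT potentials. Under $P\neq Q$ the first order linearization already produces a Gaussian limit; under $P=Q$ the first order contribution cancels by symmetry and one must go to second order, where the compact Schrödinger operator $\mathbb{L}=I+\mathcal{A}$ from Theorem~\ref{theorem:CLT-Potential-EOT} controls the limiting quadratic form.

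For part (i), let $(f^{PQ},g^{PQ})$, $(f^{PP},f^{PP})$ and $(f^{QQ},f^{QQ})$ denote the EOT potentials of the three problems, the equalities within the diagonal pairs coming from the symmetry of the Schrödinger system. The two-sample analogue of Theorem~\ref{sinkhorn_CLT}, together with the bias bound from \cite{delBarrio.et.al.2023.SIMODS.Improved}, gives the linearization
$$\sqrt{n}\bigl(\mathcal{E}_\epsilon(P_n,Q_n)-\mathcal{E}_\epsilon(P,Q)\bigr)= \sqrt{n}\int f^{PQ}d(P_n-P)+\sqrt{n}\int g^{PQ}d(Q_n-Q)+o_\P(1),$$
and the analogous expansions (with coefficient $2$) for $\mathcal{E}_\epsilon(P_n,P_n)$ and $\mathcal{E}_\epsilon(Q_n,Q_n)$. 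Combining them according to the definition of $D_\epsilon$ produces
$$\sqrt{n}\bigl(D_\epsilon(P_n,Q_n)-D_\epsilon(P,Q)\bigr)= \sqrt{n}\int(f^{PQ}-f^{PP})d(P_n-P)+\sqrt{n}\int(g^{PQ}-f^{QQ})d(Q_n-Q)+o_\P(1),$$
so the classical CLT applied to the two independent samples delivers the Gaussian limit with $\sigma^2(P,Q)=\mathrm{Var}_P(f^{PQ}-f^{PP})+\mathrm{Var}_Q(g^{PQ}-f^{QQ})$. Strict positivity of $\sigma^2(P,Q)$ for $P\neq Q$ follows from uniqueness (up to additive constants) of the Schrödinger potentials, which prevents both potential differences from being constant.

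For part (ii) the identities $f^{PQ}=f^{PP}$ and $g^{PQ}=f^{QQ}$ at $P=Q$ erase the leading terms, so one expands $\Psi(R,S):=\mathcal{E}_\epsilon(R,S)$ to second order around $(P,P)$. Using the symmetries $D_{11}\Psi=D_{22}\Psi$ and $f_\epsilon=g_\epsilon$ at $(P,P)$, a direct calculation of $D_\epsilon(P_n,P)=\Psi(P_n,P)-\tfrac{1}{2}\Psi(P_n,P_n)-\tfrac{1}{2}\Psi(P,P)$ cancels the linear and diagonal quadratic contributions and leaves
$$D_\epsilon(P_n,P)=-\tfrac{1}{2}\,D_{12}\Psi\bigl[P_n-P,P_n-P\bigr]+o_\P(n^{-1}).$$
Implicit differentiation of the Schrödinger system~\eqref{SrodingerSystem}, namely $(\partial_Q f_\epsilon,\partial_Q g_\epsilon)=-\mathbb{L}^{-1}\partial_Q\Gamma$, identifies $-D_{12}\Psi(P,P)$ as a positive semidefinite trace-class operator on $L^2_0(P)$ of the form $\epsilon\,\mathcal{K}$, with $\mathcal{K}$ expressible through the compact operator $\mathcal{A}$ from Theorem~\ref{theorem:CLT-Potential-EOT}; positivity reflects the global inequality $D_\epsilon(R,P)\geq 0$ with minimum at $R=P$, and compactness of $\mathcal{A}$ forces its eigenvalues $\{\lambda_i^2\}_{i\geq 1}$ to be summable. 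Diagonalising $\mathcal{K}$ and invoking the Karhunen--Loève representation of $\sqrt{n}(P_n-P)$ along its eigenbasis then gives $n\,D_\epsilon(P_n,P)\underset{w}{\longrightarrow}\frac{\epsilon}{2}\sum_{i}\lambda_i^2 N_i^2$ with iid $N_i\sim N(0,1)$. The main obstacle is precisely this second order step: one must control the quadratic Taylor remainder at rate $n^{-1}$, which requires uniform estimates on $f_n-f_\epsilon$ and $g_n-g_\epsilon$ in a norm strong enough to be integrated against the empirical process, and the $\mathcal{C}^{s-1}$ functional CLT from Theorem~\ref{theorem:CLT-Potential-EOT}, available for the squared Euclidean cost, is exactly what makes this feasible.
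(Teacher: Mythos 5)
Your strategy coincides with the one the paper itself sketches: under the alternative, linearize each $\mathcal{E}_\epsilon$ through its dual potentials and add up, and under the null go to second order because the first-order terms cancel, using the functional CLT for the potentials (Theorem~\ref{theorem:CLT-Potential-EOT}) to control the remainder. Since this survey does not reproduce the proof from \cite{GonzalezSanz.Loubes.Weed.2024.weaklimits,Goldfeld.et.al.2024.EJS}, your outline matches the route the paper points to, and your second-order bookkeeping $D_\epsilon(P_n,P)=-\tfrac12 D_{12}\Psi[P_n-P,P_n-P]+o_{\mathbb P}(n^{-1})$ is a correct algebraic consequence of the diagonal symmetries $D_{11}\Psi=D_{22}\Psi$ and $f^{PP}=g^{PP}$.

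There is, however, a genuine gap in the final step of part (ii). You first assert that $-D_{12}\Psi(P,P)=\epsilon\,\mathcal K$ is trace class on $L^2_0(P)$, and then justify $\sum_i\lambda_i^2<\infty$ by saying that ``compactness of $\mathcal A$ forces its eigenvalues to be summable.'' Compactness alone only gives eigenvalues tending to zero; it does not give summability and therefore does not give trace class. You either need the trace-class (or Hilbert--Schmidt for $\mathcal K^{1/2}$) assertion as an input, in which case there is nothing to deduce from compactness, or you need to prove summability, in which case compactness is the wrong property to invoke. In the present setting the actual source of trace class is the smoothness of the Gibbs kernel $e^{(f_\epsilon\oplus g_\epsilon-c)/\epsilon}$ on a compact product domain, which makes $\mathcal A$ (and the induced quadratic form) a smooth integral operator; this needs to be stated explicitly, since it is precisely what makes the spectral expansion of the weighted chi-square limit converge. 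A secondary, weaker point is that the non-degeneracy of $\sigma^2(P,Q)$ in part (i) is only hand-waved: uniqueness of each pair of Schr\"odinger potentials up to additive constants does not immediately rule out that both $f^{PQ}-f^{PP}$ and $g^{PQ}-f^{QQ}$ are $P$-, resp.\ $Q$-almost surely constant; one should use the Schr\"odinger system itself to show that constancy of both differences would force $P=Q$.
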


The limiting distribution depends on $P$, hence, the test statistic ${\rm D}_\epsilon(P_n,P)$ is not distribution-free. Finding  consistent estimators of the sequence $\{\lambda_{i}\}_{i\in \N}\subset [0, \infty)$ is an open problem. We believe that  the series expansion of the linearized operators provided in \cite{GonzalezSanz.Hundrieser..2023.Beyond} might give a consistent estimator of these parameters. 

\subsubsection{Lower complexity adaptation of entropy regularized optimal transport}\label{Section.LCA-EOT}
{
We saw before that the OT problem suffers from the curse of dimensionality, but following the LCA principle, EOT avoids the curse of dimensionality, with the dependence hidden in $C(\epsilon)$ in
\begin{equation}
    \label{eq:Sinkhorn-Bound}
     \E[ |\mathcal{E}_\epsilon(P_n,Q) - \mathcal{E}_\epsilon(P,Q)|] \leq \frac{C(\epsilon)}{n^{\frac{1}{2}}},
\end{equation}
which explodes as $\epsilon$ decreases. The first paper showing this was \cite{genevay.et.al.2019.PMLR}, where the constant $C_\epsilon$ was exponential in $1/\epsilon$. After this,  \cite{Mena.Weed.2019.Nips} showed that  $C_\epsilon \lesssim  \epsilon^{-\frac{d}{2}} $ for smooth costs.   The LCA paradigm in EOT can be formulated in two different ways: (i) does the constant $C_\epsilon$ depend only on the  smallest complexity of the measures $P$ and $Q$? (ii) Can we find a constant $C$, independent of $\epsilon$, such that 
$$ \E[ |\mathcal{E}_\epsilon(P_n,Q) - \mathcal{E}_\epsilon(P,Q)|] \leq \frac{C}{n^{\alpha}} $$
with $\alpha$ depending only on the  smallest complexity of the measures $P$ and $Q$? The two different notions of LCA were addressed in two different parallel works \cite{Stromme-Minim} and \cite{Groppe.Hundrieser.JMLR}. In \cite{Stromme-Minim} it is shown that, for Lipschitz costs:
$$ C_\epsilon \lesssim {\min(\mathcal{N}(\epsilon, {\rm supp}(P),\|\cdot\|),  \mathcal{N}(\epsilon, {\rm supp}(Q), \|\cdot\|)},$$ 
where we recall that  $ \mathcal{N}(\epsilon, \Omega,\|\cdot\|))$ denotes the $\epsilon$-covering number of $\Omega$ with the Euclidean norm.  In the same work, the author also obtained  a quantitative estimate of the EOT potentials and plans for probability measures supported on smooth manifolds without boundary, by showing an empirical PL inequality of the dual functional. The proof relies on a precise control of the eigenvalue gap of a random graph Laplacian, which \cite{Stromme-Minim} obtains from the estimates given in \cite{GarcaTrillos2019}. Remarkably, these estimates depend crucially on the control of the empirical  $\infty$-Wasserstein distance obtained in \cite{GarciaTrillos_Slepeev_2015}.}

{
On the other hand, \cite{Groppe.Hundrieser.JMLR} focused on the second notion of LCA.\footnote{In \cite{Groppe.Hundrieser.JMLR}, the authors show  similar results as  those by \cite{Stromme-Minim} but limited to smooth costs.  } They based their proof on empirical processes theory --- i.e., controlling covering numbers of EOT potentials. They assumed that the support of $P$ (or  of $Q$) is in Lipschitz homeomorphism with a domain of $\R^{m}$ with $m\leq d$.  They showed that, for Lipschitz costs, 
$$  \E[ |\mathcal{E}_\epsilon(P_n,Q) - \mathcal{E}_\epsilon(P,Q)|] \leq C  \begin{cases}
    n^{-\frac{1}{2}}  & \text{if }m<2,\\
     n^{-\frac{1}{2}} \log(n)  & \text{if }m =2, \\
     n^{-\frac{1}{m}}  & \text{otherwise},
\end{cases} $$
where the constant $C$ is independent of $\epsilon$. The rates can be improved for $C^2$ costs, see Theorem~15, ibid. 
}
\subsection{Smooth optimal transport}
{The curse of dimensionality is not
specific to classical OT and is inherent to other statistical problems.} One of the most classical examples is the density estimation problem, where the kernel density estimator 
\begin{equation}
    \label{eq:Kernel-density-est}
    \hat{p}_h^{(k)}(x)= \frac{1}{h^d n} \sum_{i=1}^n K\left(\frac{x-X_i}{h}\right)
\end{equation}
or the wavelet  density estimator $\hat{p}_h^{(w)}$ have rates of convergence depending on two factors: the smoothness of the density $p$ of $P$ and the dimension. This means that some plug-in kernel/wavelet density estimators adapt to the smoothness of the unknown density, giving almost parametric rates if the dimension is small compared to the degree of smoothness. 

As we have seen, the plug-in estimator of the OT cost adapts to the underlying complexity of the measures  \cite{Hundrieser.et.al.2024.AIHP} but not to the smoothness of the densities \cite{Fournier.Guillin.2014.PTRF}. Hence, different types of estimators are required to leverage the a priori smoothness information of the density. The first contribution in this direction was due to \cite{Hutter.Rigollet.2021.AOS}, where the minimax rates of convergence for the estimation of smooth optimal transport maps are provided. The upper bound is found via an estimator based on the minimization of the empirical semi-dual OT problem, over truncated
wavelet expansions, which turns out to be computationally unfeasible. More tractable estimators have been proposed in \cite{Deb.Ghosal.Sen.2021.Nips,Gunsilius.2021.Econometric,Manole.et.al.2024.AoS} based on solving the OT between the continuous smooth density estimators described above. Obviously, to approximate the OT related quantities the density estimators should converge to their population counterparts. Hence, bandwidths (for kernel estimators) or truncations (for wavelet) should be adapted to the sample size. 
 
\subsubsection{Smooth Wasserstein distances}
In this section we focus on kernel-type  estimators where the bandwidth does not change with the sample size. In particular, we focus on  
 the Smooth $p$-Wasserstein distance, proposed by \cite{Goldfeld.et.al.2020.IEE.TrIinfTh,goldfeld.Kristjan.2020.AIstats},   
$$ \mathcal{W}_p^\sigma(P,Q)=\mathcal{W}_p(\mathcal{N}(0,\sigma^2{{\rm I}_d} )*P, \mathcal{N}(0,\sigma^2{{\rm I}_d} )*Q), $$
where 
$ \mathcal{N}(0,\sigma^2{{\rm I}_d} )*P$ denotes the convolution of $P$ with a $\mathcal{N}(0,\sigma^2{{\rm I}_d} )$ measure. The smooth OT-plan is denoted by $\pi_\sigma$, which by \cite{Gangbo.McCann.1996.ActaMath} is concentrated on the graph of a map (for $p\neq 1$).

A little thought shows that $\mathcal{W}_p^\sigma$ defines a distance in the space of probability measures with finite $p$-th moment.
The smooth OT plans and costs converge to their unregularized counterparts as the regularization parameter decreases. Hence, its empirical version $\mathcal{W}_p^\sigma(P_n,Q)$, if computable, can be used as a one-sample test statistic.

As EOT, for a fixed regularization parameter,   smooth OT avoids the curse of dimensionality and the following limiting theorem  hold{s} \cite{Goldfeld.et.al.2024.AoAP}. We state the result for compactly supported measures, however it holds in greater generality, cf.~Equation~(4) in \cite{Goldfeld.et.al.2024.AoAP}. 
\begin{theorem}
Fix $\sigma>0$ and $p>1$. Assume that $P$ and $Q$ are compactly supported. Then  
    \begin{enumerate}
        \item[(i)] If $P=Q$, $\sqrt{n}\mathcal{W}_p^\sigma (P_n,P)$  converges in distribution to the dual norm of a  centered Gaussian process in a negative Sobolev space,
     \item[(ii)] If $P\neq Q$ $$\sqrt{n}(\mathcal{W}_p^\sigma (P_n,Q)- \mathcal{W}_p^\sigma (P,Q))$$ converges in distribution  to a {non-degenerate} centered Gaussian r.v.  
    \end{enumerate}
\end{theorem}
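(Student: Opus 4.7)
The plan is to reduce the problem to classical OT between the Gaussian-smoothed measures $P_\sigma := \varphi_\sigma \ast P$, $Q_\sigma := \varphi_\sigma \ast Q$ and $P_{n,\sigma} := \varphi_\sigma \ast P_n$, where $\varphi_\sigma$ denotes the density of $\mathcal{N}(0,\sigma^2 I)$, and then apply a functional delta method to the map $\mu \mapsto \mathcal{W}_p(\mu, Q_\sigma)$ along the smoothed empirical process $\sqrt{n}(P_{n,\sigma} - P_\sigma) = \varphi_\sigma \ast \mathbb{G}_n$, with $\mathbb{G}_n := \sqrt{n}(P_n - P)$. The two ingredients are (a) a functional CLT for $\varphi_\sigma \ast \mathbb{G}_n$ and (b) Hadamard directional differentiability of $\mathcal{W}_p(\,\cdot\,, Q_\sigma)$ at $P_\sigma$, whose form differs in the two regimes. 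For (a), since $\varphi_\sigma$ has $\mathcal{C}^\infty$, exponentially decaying derivatives of every order and $P$ is compactly supported, the class $\{\varphi_\sigma(\,\cdot\, - y) : y \in \mathbb{R}^d\}$ is Donsker, so $\varphi_\sigma \ast \mathbb{G}_n$ converges weakly to a centered Gaussian process $\mathbb{G}_\sigma$ with $\mathcal{C}^\infty$ sample paths.

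For part (ii), where $P \neq Q$ (hence $P_\sigma \neq Q_\sigma$ by injectivity of Gaussian convolution), the OT potentials $(f_0, g_0)$ between $P_\sigma$ and $Q_\sigma$ are unique up to constants (both measures are $\mathcal{C}^\infty$ with strictly positive densities) and Kantorovich duality combined with the envelope property yields the linear Hadamard derivative
\[
D_{P_\sigma}\mathcal{W}_p^p(\,\cdot\,, Q_\sigma)[h] = \int f_0\, dh.
\]
Standard stability of classical OT potentials between smooth densities gives $\|f_{0,n} - f_0\|_\infty = o_{\mathbb{P}}(1)$ on any compact, so we obtain the linearization
\[
\mathcal{W}_p^\sigma(P_n,Q)^p - \mathcal{W}_p^\sigma(P,Q)^p = \int f_0\, d(P_{n,\sigma} - P_\sigma) + o_{\mathbb{P}}(n^{-1/2}).
\]
Rewriting the leading term as $\int (\varphi_\sigma \ast f_0)\, d(P_n - P)$ (a centered i.i.d.\ average) and composing with the delta method for $t \mapsto t^{1/p}$ yields the stated Gaussian limit with variance proportional to $\operatorname{Var}_P(\varphi_\sigma \ast f_0)$, which is strictly positive because $P \neq Q$ forces $f_0$ to be non-constant and convolution with the analytic kernel $\varphi_\sigma$ preserves non-constancy.

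For part (i), $P = Q$, the potentials are constant and the linear derivative vanishes, so a second-order analysis is required. My plan is to invoke the tangent-cone expansion at the identity: for a zero-mean smooth signed density $h$ and small $\varepsilon > 0$,
\[
\mathcal{W}_p(P_\sigma + \varepsilon h, P_\sigma) = \varepsilon\, \|h\|_{*,p} + o(\varepsilon), \qquad \|h\|_{*,p} := \inf\bigl\{\|T\|_{L^p(P_\sigma)} : -\nabla \cdot (P_\sigma T) = h\bigr\},
\]
a weighted negative Sobolev norm. Applying this with $\varepsilon = n^{-1/2}$ and $h = \varphi_\sigma \ast \mathbb{G}_n$, the continuous mapping theorem gives $\sqrt{n}\,\mathcal{W}_p^\sigma(P_n, P) \underset w \longrightarrow \|\mathbb{G}_\sigma\|_{*,p}$, which is precisely the dual Sobolev norm of the centered Gaussian process $\mathbb{G}_\sigma$.

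The main obstacle is the second-order expansion in part (i): establishing the tangent-cone formula uniformly over the stochastic scale of $\varphi_\sigma \ast \mathbb{G}_n$. This requires quantitative stability estimates for the optimal map between $P_\sigma$ and its perturbation which, for general $p > 1$, involves a nonlinear elliptic problem rather than the linear Poisson equation familiar from the $p=2$ case. A secondary technical issue is that $P_\sigma$ has full support on $\mathbb{R}^d$ while $\mathbb{G}_\sigma$ decays only as fast as $\varphi_\sigma$, so ensuring almost-sure finiteness of $\|\mathbb{G}_\sigma\|_{*,p}$ and tightness of $\|\varphi_\sigma \ast \mathbb{G}_n\|_{*,p}$ demand matching moment and tail controls; these should follow from compactness of $\operatorname{supp}(P)$ together with the Gaussian decay of the smoothing kernel, but the bookkeeping is delicate.
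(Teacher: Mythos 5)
Your proposal follows essentially the same route as the paper: a functional delta method along the smoothed empirical process, using Kantorovich duality for the linear Hadamard derivative away from the diagonal (part (ii)) and the dual negative-Sobolev first variation of $\mathcal W_p$ on the diagonal (part (i)). The paper itself credits exactly these two ingredients, citing Villani's Exercise 22.20 and Remark 3.2 of the AoAP paper of Goldfeld et al.\ for the Sobolev first-variation formula. One small correction in terminology: what you call a ``second-order analysis'' in part (i) is really a first-order (tangent-cone) expansion whose Hadamard directional derivative is a sublinear norm rather than a linear functional --- this is why the limit is a dual-norm of a Gaussian rather than a Gaussian, but no second derivative is involved. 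The technical hurdles you flag (uniform validity of the tangent-cone expansion for general $p$, tightness of $\varphi_\sigma*\mathbb G_n$ in a negative Sobolev topology on the non-compact support of $P_\sigma$) are indeed where the hard work lies and are handled in the cited AoAP paper by working in a weighted negative Hölder/Sobolev topology adapted to the Gaussian tails; you correctly identify them but do not execute them, which is consistent with the survey-level statement being reviewed.
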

\begin{remark}
    A similar result holds for $p=1$,  where under the null hypothesis $P=Q$,  the limit 
$\sqrt{n}\mathcal{W}_p^\sigma (P_n,P)$ also has   as limiting distribution a dual norm \cite{sadhu.Goldfeld.Kato.2022.Preprint,Sloan.Goldfeld.Kato.2021.PMLR}. However, for $p=1$ and  under the alternative, the limiting distribution is not in general Gaussian due to the non uniqueness of the potentials.
\end{remark}
 
The proof technique follows the functional delta-method and the fact that first variations of the Wasserstein distances are dual Sobolev norms,  cf.~\cite[Exercise 22.20]{villani} and \cite[Remark~3.2]{Goldfeld.et.al.2024.AoAP}. The authors also show the consistency of the bootstrap (cf.~Proposition~3, ibidem).  The same proofs adapt for variations of smooth Wasserstein distances,  based on  convolutions with different smooth measures such as the Neumann heat semigroup  of a bounded strictly convex set $\Omega$ with smooth boundary.  Assuming that $P$ and $Q$ are supported in $\Omega$, the linearization of the optimal transport map for the squared-Euclidean cost is possible following the approach described in the next section. Hence, in this setting central limit theorems for the regularized maps hold for sufficiently smooth domains. 

Finally, we point out that the contraction of the heat semigroup w.r.t.\ the Wasserstein distance, i.e., the inequality 
$$ \mathcal{W}_p^\sigma (P_n,Q)\leq e^{-c\,\sigma } \mathcal{W}_p (P_n,Q), $$
implies that 
the one-sample goodness-of-fit test statistic $\mathcal{W}_p^\sigma (P_n,Q)$ provides uniform conservative control of $\mathcal{W}_p (P_n,Q)$ under the alternative $H_1:Q\neq P$. However, the same bound implies that the test will be less powerful as $\sigma$ increases. This is something expected, as $ \mathcal{N}(0,\sigma^2{{\rm I}_d} )*P$ becomes exponentially close to the fundamental  solution of the heat equation at time $t=\sigma$ as the time parameter increases. Hence both $ \mathcal{N}(0,\sigma^2{{\rm I}_d} )*P_n$ and $ \mathcal{N}(0,\sigma^2{{\rm I}_d} )*Q$ become almost indistinguishable for large values of $\sigma$. 
 
\subsubsection{ Estimation of  smooth optimal transport maps} 
We saw that EOT and  Smooth OT avoid the curse of dimensionality, with plans and cost satisfying   CLTs. However, the centering variable differs from the unregularized counterpart. In this section, we focus on a class of estimators that approximate sufficiently smooth OT maps satisfying a CLT. One notable example is the work of \cite{Manole.et.al.2024.Preprint}, who proved a CLT for kernel density–based methods when the measures are supported on the flat torus (i.e., periodic measures with the squared periodic distance cost). Let us review  the main ideas of \cite{Manole.et.al.2024.Preprint} and the difficulties for its Euclidean extension.

Let $\hat{P}_h^{(k)}$ be the probability measure with density \eqref{eq:Kernel-density-est}. Let $\hat{\Omega}_{P}$, $\Omega_P$ and $\Omega_Q$ be the supports of $\hat{P}_h^{(k)}$, $P$ and $Q$, respectively. Assume that there exist smooth convex potentials $\hat{\varphi}_{h}$ and $\varphi$ whose gradients $\nabla \hat{\varphi}_{h}$ and $\nabla \varphi$ push forward $Q$ to $\hat{P}_h^{(k)}$ and $P$, respectively.  Then they solve Monge-Ampère equations 
 $$ \det(\nabla^2\hat{\varphi}_{h} ) = \frac{q}{\hat{p}_h^{(k)}\circ \nabla\hat{\varphi}_{h}} \quad {\rm s.t.}\quad \nabla\hat{\varphi}_{h}(\Omega_Q)\subset \hat{\Omega}_{P} $$
and 
 $$ \det(\nabla^2{\varphi} ) = \frac{q}{p\circ \nabla {\varphi}} \quad {\rm s.t.}\quad \nabla {\varphi}(\Omega_Q)\subset {\Omega_P}. $$
In the flat torus, the boundary conditions are periodic. 
The key idea of \cite{Manole.et.al.2024.Preprint} is to treat these equations as a Z-estimation problem. Hence, one needs to show the Fréchet differentiability of the Monge-Ampère equation at the pair $(P, \varphi) $ with  partial derivative w.r.t.\ $\varphi$ admitting  bounded inverse in some appropriate Banach spaces of differentiable functions.   The interior part of the equation can be easily linearized if the densities are bounded away from zero and belong to $\mathcal{C}^{1,\alpha}$ (cf.~\cite[Chapter~3]{Figalli.2017.Book}). The resulting linearized operator is elliptic. Under the additional assumption 
\begin{equation}
    \label{Assumption}
    \lambda I\leq\nabla^2 \varphi  \leq \Lambda I , \quad 0<\lambda\leq \Lambda<\infty,
\end{equation}
the linearized equation is non-degenerate elliptic.

In the flat torus, this linearized equation admits a unique classical solution (up to additive constants) for smooth data ---hence, the linearized operator is invertible---, see \cite{Manole.et.al.2024.Preprint}. In the Euclidean case, the boundary conditions need to be linearized too. This is the first {major} difficulty for Euclidean data. However, this one can be solved if the smoothed density $\hat{P}_h^{(k)}$ has a support $\hat{\Omega}_{P} $ converging to ${\Omega_P} $  in a suitable smooth way. For instance, if $\hat{\Omega}_{P}= {\Omega_P} $ holds for all $n\in \N$. In this case, one obtains that the
linearized equation is elliptic with oblique boundary conditions and its solvability holds as in the periodic case (cf.~\cite{GonzalezSanz.Sheng.2024,loeper2005regularity}).  One thus {has} the relation 
$$ \nabla\hat{\varphi}_{h}-\nabla\varphi= \mathbb{L}^{-1} (\hat{p}_h^{(k)}-p) +o\left(\|\hat{p}_h^{(k)}-p\|_{\mathcal{C}^{1,\alpha}} \right),$$
where $\mathbb{L}^{-1}$ is the inverse of the linearized Monge-Ampère equation at  $\varphi $.  The error  term in the previous display is not sharp and it  can be improved to $ O\left(\|\hat{p}_h^{(k)}-p\|_{\mathcal{C}^{0,\alpha}} \|\hat{p}_h^{(k)}-p\|_{\mathcal{C}^{1,\alpha}} \right)$. The suboptimal error derived in the first argument is due to the fact that the linearization of the right-hand side of the Monge-Ampère equation requires control of a further derivative of the difference $\hat{p}_h^{(k)}-p$.     This can be improved  by instead linearizing  the {right-hand} side of 
\begin{equation}
    \label{MA:modified}
   ({p}\circ \nabla\hat{\varphi}_{h}) \cdot \det(\nabla^2\hat{\varphi}_{h} ) = \frac{   ({p}\circ \nabla\hat{\varphi}_{h})  \cdot q}{\hat{p}_h^{(k)}\circ \nabla\hat{\varphi}_{h}} ,
\end{equation}
with the same boundary conditions (oblique or periodic, depending on the setting). The linearization of the left-hand side of \eqref{MA:modified} is again a non-degenerate elliptic operator, while the linearization of the right-hand side of \eqref{MA:modified} behaves in $\mathcal{C}^{0,\alpha}$ as 
$$ \left\| \frac{   ({p}\circ \nabla\hat{\varphi}_{h})  \cdot q}{\hat{p}_h^{(k)}\circ \nabla\hat{\varphi}_{h}}-q\right\|_{\mathcal{C}^{0,\alpha}}\leq C \|\hat{p}_h^{(k)}-p\|_{\mathcal{C}^{0,\alpha}}. $$
From here we derive the estimate
\begin{align}\label{eq:}
    \| \nabla\hat{\varphi}_{h}-\nabla\varphi \|_{\mathcal{C}^{1,\alpha}} \leq C \|\hat{p}_h^{(k)}-p\|_{\mathcal{C}^{0,\alpha}},
\end{align}
which allows us to obtain the claimed improvement (cf.~\cite[Theorem~2]{Manole.et.al.2024.Preprint}). Hence, if there were a central limit theorem for $\hat{p}_h^{(k)}$ as an element of $\mathcal{C}^{0,\alpha}$ one would get a central limit theorem for $\nabla\hat{\varphi}_{h}$ in $\mathcal{C}^{1,\alpha}$. This however, cannot hold (cf.\ \cite[Theorem~7]{Manole.et.al.2024.Preprint}). Limits can only be obtained in negative Sobolev norms or pointwise for dimensions greater than $3$.

To derive {pointwise limits},  \cite{Manole.et.al.2024.Preprint} developed a bias-variance decomposition of $\nabla\hat{\varphi}_{h}-\nabla\varphi$, where the bias term 
is represented by $\nabla{\varphi}_{h}-\nabla\varphi$ and the variance by $\nabla\hat{\varphi}_{h}-\nabla\varphi_h$, for $\nabla{\varphi}_{h}$ denoting  the transport map from $Q$ to the probability measure $P$ convoluted with the kernel with bandwidth $h$, call its density $p_h$.  Then, using the estimates 
$$ \| p_h-p\|_{\mathcal{C}^{0,\alpha}} \leq h^{s-\alpha}, \quad  \| p_h-\hat{p}_h\|_{\mathcal{C}^{0,\alpha}} \leq \sqrt{\frac{\log(h^{-1})}{n\,h^{2\alpha+d}}}, $$
where $s$ is the number of continuous derivatives of the densities, and the previous estimates,
\cite{Manole.et.al.2024.Preprint} obtains the bound 
$$ \|\nabla\hat{\varphi}_{h}-\nabla\varphi -\mathbb{L}^{-1} (\hat{p}_h^{(k)}-p)\|_{\mathcal{C}^{2,\alpha}}\leq h^{2s-1-3\alpha} + \sqrt{\frac{1}{n h^{d+2+4\alpha}}}, $$
for appropriate choices of the bandwidth $h$. 
The next step is to show that $\mathbb{L}^{-1} (\hat{p}_h^{(k)}-p)$ has a non-degenerate point-wise limit with rate slower than the right-hand side of   the previous display. The first step is to show that the bias term vanishes faster than the variance term. Then, by means of a Fourier analysis to guarantee  Lyapunov’s condition, {the central limit theorem for the variance term can then be established.} This last step and the control of the boundary bias of the kernel density estimators are the main hurdles for the Euclidean generalizations of the following  result derived by \cite{Manole.et.al.2024.Preprint} for periodic data. 

\begin{theorem}
    Fix $d\geq 3$ and { $s> 2$}. Let $p,q$ be  densities in $\mathbb{R}^d/\mathbb{Z}^d$ such that $\log(p)$ and $\log(q)$ are $\mathcal{C}^s$. Assume that $K\in \mathcal{C}^\infty_c((0,1)^d)$ is an even kernel such that its Fourier transform $\mathcal{F}(K)$ satisfies
    $$\sup_{\|x\|\neq 0} \frac{|[\mathcal{F}(K)](x)-1|}{\|x\|^{s+1}}<\infty. $$
  Then for any $x\in \mathbb{R}^d/\mathbb{Z}^d$ and $h=c\,n^{-\beta}$ for some $c>0$ and $\frac{1}{d+4}<\beta <\frac{1}{d+2s}$, the sequence $$\sqrt{n h^{d-2}}\left(\nabla\hat{\varphi}_{h}(x)-\nabla\varphi(x)\right)$$ 
    admits a non-degenerate Gaussian limit in distribution. 
\end{theorem}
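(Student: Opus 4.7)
The plan is to follow the bias--variance decomposition strategy set out in \cite{Manole.et.al.2024.Preprint}. Let $\varphi_h$ denote the Brenier potential between $Q$ and the smoothed probability with density $p_h=K_h*p$, and write
\[ \nabla\hat{\varphi}_{h}(x)-\nabla\varphi(x) \;=\; \bigl(\nabla\hat{\varphi}_{h}(x)-\nabla\varphi_{h}(x)\bigr) \;+\; \bigl(\nabla\varphi_{h}(x)-\nabla\varphi(x)\bigr), \]
the first summand being the stochastic part and the second the deterministic bias. I would control the bias by combining the kernel condition $|\mathcal{F}(K)(\xi)-1|\lesssim\|\xi\|^{s+1}$ with $\log p\in\mathcal{C}^{s}$, giving $\|p_h-p\|_{\mathcal{C}^{0,\alpha}}\lesssim h^{s-\alpha}$, and then applying the implicit function theorem to the Monge--Amp\`ere operator at $\varphi$, whose Fr\'echet derivative is the non-degenerate periodic elliptic operator $\mathbb{L}$ described earlier. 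This transfers the estimate to $\|\nabla\varphi_h-\nabla\varphi\|_{\mathcal{C}^{1,\alpha}}\lesssim h^{s-\alpha}$, which is $o\bigl((nh^{d-2})^{-1/2}\bigr)$ for $\beta<1/(d+2s)$ and $\alpha$ small.

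For the stochastic part, the same argument linearised at $\varphi_h$ (rather than at $\varphi$, to match the smoothness of $\hat{p}_h^{(k)}$ against $p_h$) yields
\[ \nabla\hat{\varphi}_{h}(x)-\nabla\varphi_{h}(x) \;=\; \bigl(\nabla\mathbb{L}_{h}^{-1}(\hat{p}_h^{(k)}-p_h)\bigr)(x) \;+\; \mathrm{Rem}_n(x), \]
with $\mathrm{Rem}_n$ quadratic in $\hat{p}_h^{(k)}-p_h$. Together with the concentration bound $\|\hat{p}_h^{(k)}-p_h\|_{\mathcal{C}^{0,\alpha}}\lesssim\sqrt{\log(h^{-1})/(nh^{2\alpha+d})}$, a pointwise evaluation makes $\mathrm{Rem}_n(x)=o_{\mathbb{P}}((nh^{d-2})^{-1/2})$ exactly when $\beta>1/(d+4)$, which fixes the lower endpoint of the admissible window. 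The task therefore reduces to a pointwise CLT for $\sqrt{nh^{d-2}}\,\nabla\mathbb{L}_h^{-1}(\hat{p}_h^{(k)}-p_h)(x)$.

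I would establish this CLT by writing it as an i.i.d.\ sum $n^{-1}\sum_i\Psi_{h,x}(X_i)$ with $\Psi_{h,x}$ mean zero, and analysing $\Psi_{h,x}$ in Fourier coordinates on the torus. Since $\nabla\mathbb{L}_{h}^{-1}$ is a pseudodifferential operator of order $-1$ (principal symbol $\sim \mathrm{i}\xi\,a(x,\xi/|\xi|)/|\xi|^{2}$), and $\widehat{\hat{p}_h^{(k)}-p_h}(\xi)=\mathcal{F}(K)(h\xi)\widehat{\hat P_n-P}(\xi)$, the pointwise variance satisfies
\[ \mathrm{Var}\bigl(\Psi_{h,x}(X_1)\bigr)\;\asymp\; \sum_{0\neq\xi\in\mathbb{Z}^d}|\xi|^{-2}|\mathcal{F}(K)(h\xi)|^{2}\;\asymp\; h^{2-d}, \]
the dominant contribution coming from the band $|\xi|\lesssim h^{-1}$. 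A Green-function representation of $\nabla\mathbb{L}_h^{-1}$ together with the cutoff of the singularity at scale $h$ gives $\|\Psi_{h,x}\|_\infty\lesssim h^{1-d}$, whence Lyapunov's ratio is $n^{-1/2}h^{-d/2}\to 0$ along the prescribed window. Slutsky's theorem then combines the pieces into the claimed non-degenerate Gaussian limit.

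The main obstacle is the precise spectral/Green-function analysis of $\nabla\mathbb{L}_{h}^{-1}$. Because $\mathbb{L}_h$ has variable coefficients depending on $\nabla^{2}\varphi_h$, it does not diagonalise in the trigonometric basis and Plancherel cannot be applied directly. A parametrix construction is needed to show that the principal-symbol contribution (matching that of a constant-coefficient operator frozen at $x$) dominates the pointwise variance, that the lower-order terms are negligible, and that the limiting variance is strictly positive with an explicit dependence on $\nabla^{2}\varphi(x)$, $p(x)$ and $\mathcal{F}(K)$. The dimension assumption $d\geq 3$ enters here in an essential way: the Green function of $\mathbb{L}$ has the locally integrable behaviour $|z-x|^{2-d}$ and the Fourier sum $\sum|\xi|^{-2}$ restricted to $|\xi|\lesssim h^{-1}$ genuinely diverges like $h^{2-d}$, whereas in $d=2$ the sum is only logarithmic and the Lyapunov argument breaks down.
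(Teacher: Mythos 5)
Your overall strategy is the one the paper sketches for the result of Manole et al.: decompose into a deterministic bias $\nabla\varphi_h-\nabla\varphi$ and a stochastic term, linearize the Monge--Amp\`ere operator (the paper linearizes at $\varphi$ and writes everything through $\mathbb{L}^{-1}(\hat p^{(k)}_h-p)$, you linearize the stochastic part at $\varphi_h$; this is a harmless variant since $\mathbb{L}_h\to\mathbb{L}$), bound the remainder using the $\mathcal C^{0,\alpha}$ concentration of $\hat p^{(k)}_h-p_h$, and obtain the pointwise CLT via Fourier analysis and Lyapunov's condition. Your variance scaling $h^{2-d}$, the $\|\Psi_{h,x}\|_\infty\lesssim h^{1-d}$ bound, the resulting Lyapunov ratio $n^{-1/2}h^{-d/2}$, and the role of $d\ge 3$ are all consistent with what the paper describes.

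However, the allocation of the two endpoints of the bandwidth window to the two error sources is backwards. Writing $h=n^{-\beta}$, the leading term is of magnitude $(nh^{d-2})^{-1/2}=n^{-1/2}h^{-(d-2)/2}$. The bias $\|\nabla\varphi_h-\nabla\varphi\|_{\mathcal C^{1,\alpha}}\lesssim h^{s-\alpha}$ is $o\bigl((nh^{d-2})^{-1/2}\bigr)$ iff $\beta\bigl(s-\alpha+\tfrac{d-2}{2}\bigr)>\tfrac12$, i.e.\ $\beta>\tfrac{1}{d+2s-2-2\alpha}$: small bias requires \emph{undersmoothing} ($h$ small, $\beta$ large), not $\beta<1/(d+2s)$ as you claim. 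Conversely, the quadratic remainder, which scales like $\|\hat p^{(k)}_h-p_h\|_{\mathcal C^{0,\alpha}}^2\asymp \tfrac{\log(1/h)}{nh^{d+2\alpha}}$, is small relative to the leading term only when $h$ is \emph{not too small}, which gives an upper bound $\beta< $ (something close to $1/(d+4)$), not the lower bound you state. Note, incidentally, that the inequality as printed in the theorem, $\tfrac{1}{d+4}<\beta<\tfrac{1}{d+2s}$, is vacuous whenever $s\ge d\ge 3$ (since then $d+2s>d+4$); the intended window is almost certainly $\tfrac{1}{d+2s}<\beta<\tfrac{1}{d+4}$, and flagging this misprint would have been the natural byproduct of getting the constraint directions right.
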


\section{Further OT-related limits}\label{Section:Further-OT}
\subsection{Sliced optimal transport}\label{sect:sliced}

A different, alternative way to mitigate the computational and statistical challenges that arise in high-dimensional OT problems is to try to leverage the properties of the one-dimensional setting by computing OT distances between projected distributions. Among the various notions explored in the existing literature, we focus on the most prominent ones: the sliced and max-sliced Wasserstein distances, defined respectively as
\begin{align}\label{eq:sliced}
&\mathcal{S}_p(P_n,Q)= \int_{\mathbb S^{d-1}} \mathcal{T}_p \left(\operatorname{Pr}_u \sharp P_n,\operatorname{Pr}_u \sharp  Q\right)\  d\sigma(u) \ , \\
\label{eq:max-sliced}&{\mathcal{M}_p} (P_n,Q)= \sup_{u\in\mathbb S^{d-1}} \mathcal{T}_p \left(\operatorname{Pr}_u \sharp P_n,\operatorname{Pr}_u \sharp  Q\right) \ ,
\end{align}
where $\sigma$ denotes the uniform measure on the unit sphere $\mathbb S^{d-1}$ and $\operatorname{Pr}_u$ the projection onto $u\in\mathbb S^{d-1}$. 
For the max-sliced version (and a subspace generalization)
\cite{niles2022estimation} obtained expectation bounds, with rates independent of the dimension. \cite{Manole.et.al.EJS} adapts the previous bounds to the sliced setting, extending their validity to a trimmed version of \eqref{eq:sliced}, and proving the first distributional limit for the sliced Wasserstein distance 
\begin{equation}\label{biased_CLT_sliced}
\sqrt{n}(\mathcal{S}_p(P_n,Q)-\mathcal{S}_p(P,Q))\underset w \to N(0,v_p^2(P,Q)),
\end{equation}
and its extension for the trimmed version, {for $p > 1$ and $P \neq Q$ (in the sense
that the asymptotic distribution vanishes if $P = Q$, as in Theorem \ref{1dCLT})}. 
Denoting by $F_{u,n}$ and $F_u$ the CDF of the projected distributions along the direction of $u$, their asymptotic result is based on the weak convergence of the empirical process $\mathbb G_n(u,x) = \sqrt{n}(F_{u,n}(x)-F_{u}(x)) $, along with Hadamard differentiability and the functional delta method. Since this work primarily focuses on the trimmed version, the assumptions are rather strong for the untrimmed setting, involving finiteness of the functional 
\begin{equation}
    SJ_{\infty}(P) = \underset{0<t<1}{\operatorname{essup}} \   \frac{1}{f_u(F_u^{-1}(t))} <\infty\ \ ,
\end{equation}
where $f_u$ denotes the density associated to $F_u$.
An important step in the derivation of distributional limits for projection-based distances was the unifying approach presented in \cite{Xi.Weed.2022.Nips}. Using similar ideas to those in \cite{Manole.et.al.EJS}, and inspired by the duality-based approach in \cite{Hundrieser.et.al.BJ.2024.Unifying}, \cite{Xi.Weed.2022.Nips} investigates the weak convergence of the sliced process $\mathbb{G}_n(u) = \sqrt{n}(\mathcal{T}_p(P_n^u, Q^u) - \mathcal{T}_p(P^u, Q^u))$,
under the assumptions of compactly supported probability distributions such that the support of the projected probabilities is an interval. This ensures the uniqueness (up to constants) of the optimal transport (OT) potentials for each projection. Weak convergence for various projection-based distance notions follows directly from Hadamard differentiability and the functional delta method. Specifically, both sliced and max-sliced distances are considered. The Hadamard differential for the sliced version is linear, which implies the centered Gaussian limit \eqref{biased_CLT_sliced}, but this is not necessarily the case for the max-sliced version. {The main results primarily concern the case $p > 1$, $P \neq Q$, although some insights are also provided
for $p = 1$.}

Independently, \cite{Goldfeld.et.al.2022.statisticalinferenceregularizedoptimal} derived distributional limits for the sliced and max-sliced Wasserstein distances as a consequence of techniques more closely related to optimal transport. In particular, they leverage the dual expression of the one-dimensional Wasserstein distance with $p=1$ as a supremum over Lipschitz functions to demonstrate weak convergence of the empirical process indexed by the functions $\varphi \circ \operatorname{Pr}_u$, where $\varphi$ is 1-Lipschitz and $u \in \mathbb{S}^{d-1}$. Then, they conclude weak convergence from the extended functional delta method under the assumption of compactly supported probabilities with convex support for $p > 1$ {and $P\neq Q$}, and under mild moment assumptions for $p = 1$. The assumptions required for $p = 1$ were further refined by \cite{xu2022central}. 

Later, \cite{Hundrieser.et.al.2024.SPA.Estimated} provided a refined version of the results in \cite{Xi.Weed.2022.Nips} and \cite{Goldfeld.et.al.2022.statisticalinferenceregularizedoptimal}, again for compactly supported probabilities but with slightly weaker assumptions on the supports, using similar arguments to those in \cite{Xi.Weed.2022.Nips}. In a different vein, employing slightly different proof techniques, this work also explores, with great generality, distributional limits for empirical transport problems where the cost function is also estimated from the data.

In contrast to these unifying approaches, two recent papers have again adopted problem-specific strategies to improve existing results. \cite{han2024max} proved completely dimension-free expectation bounds for $\mathcal M_p$, which can be extended to infinite-dimensional Hilbert spaces. The recent paper \cite{sliced_arxiv} gives a new CLT for $\mathcal S_p$ based on the Efron-Stein linearization approach presented in Section \ref{general_dimension} in this paper. We quote here a version of the CLT for the fluctuation.

\begin{theorem}\label{teor:CLT_sliced_fluctutation}
Assume $p>1$, $\delta>0$. Let  $P$ and $Q$ be probabilities on $\mathbb{R}^d$ with finite moments of order $2p+\delta$. Assume further {that} $P$ is absolutely continuous with negligible boundary, and $\operatorname{int}({\rm supp}({P}))$ is connected. Then 
\begin{equation}\label{unbiased_CLT_sliced}
\sqrt{n}\left(\mathcal{S}_p(P_n,Q)-\mathbb E[\mathcal{S}_p(P_n,Q)]\right)\underset w \to N(0,v_p^2(P,Q)),
\end{equation}
for some $v_p^2(P,Q)\geq 0$ {and strictly positive unless $P=Q$.}
\end{theorem}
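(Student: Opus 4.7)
The plan is to combine the Efron--Stein linearization used in the proof of Theorem~\ref{general1dCLT}, applied separately to each one-dimensional projected problem, with Fubini and dominated convergence to transfer the linearization to the sliced functional, reducing the CLT to a classical CLT for a single i.i.d.\ sum.

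First I would fix, for each $u\in\mathbb{S}^{d-1}$, the projected marginals $P^u=\operatorname{Pr}_u\sharp P$, $Q^u=\operatorname{Pr}_u\sharp Q$, $P_n^u=\operatorname{Pr}_u\sharp P_n$. Under the hypotheses, $P^u$ has a density; its support is the continuous image of the connected set $\operatorname{supp}(P)$, hence an interval, so its boundary in $\R$ is negligible; and both $P^u,Q^u$ have $(2p+\delta)$-moments, uniformly in $u$. These are precisely the one-dimensional assumptions needed to produce a uniquely determined OT potential $f_{0,u}$ from $P^u$ to $Q^u$ (normalised by $\int f_{0,u}\,dP^u=0$) together with convergence of the empirical potential $f_{n,u}\to f_{0,u}$ in $L^2(P^u)$ (see \cite{delBarrio.GonzalezSanz.Loubes.AIHP.2024.CLTgeneral}). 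I would then define
$$\Phi(x):=\int_{\mathbb{S}^{d-1}} f_{0,u}(\langle u,x\rangle)\,d\sigma(u),\qquad x\in\R^d,$$
observe that the uniform moment bound on $(P^u,Q^u)$ yields a uniform control of $\|f_{0,u}\|_{L^2(P^u)}$, hence $\Phi\in L^2(P)$, and set $v_p^2(P,Q):=\text{Var}_P(\Phi)$.

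Next, the classical i.i.d.\ CLT applied to $\Phi(X_i)$ gives $\sqrt{n}\int\Phi\,d(P_n-P)\underset{w}{\longrightarrow}N(0,v_p^2(P,Q))$. By Fubini, $\int\Phi\,dP_n=\int\bigl(\int f_{0,u}\,dP_n^u\bigr)d\sigma(u)$, so after centering it remains to show that the residual
$$R_n:=\sqrt{n}\Bigl(\mathcal{S}_p(P_n,Q)-\textstyle\int\Phi\,dP_n\Bigr)-\E\Bigl[\sqrt{n}\Bigl(\mathcal{S}_p(P_n,Q)-\textstyle\int\Phi\,dP_n\Bigr)\Bigr]\underset{\text{Pr.}}{\longrightarrow}0.$$
Setting $D_n(u):=\mathcal{T}_p(P_n^u,Q^u)-\int f_{0,u}\,dP_n^u$, the inner quantity equals $\int D_n(u)\,d\sigma(u)$, and Jensen's inequality for the probability measure $\sigma$ gives $n\,\text{Var}\bigl(\int D_n(u)\,d\sigma(u)\bigr)\le n\int\text{Var}(D_n(u))\,d\sigma(u)$. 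For every fixed $u$, the Efron--Stein step behind Theorem~\ref{general1dCLT} supplies a bound of the form
$$n\,\text{Var}(D_n(u))\le C\,\E\bigl[(f_{n,u}(X_1^u)-f_{0,u}(X_1^u))^2\bigr],$$
whose right-hand side tends to zero by the $L^2(P^u)$-convergence of the empirical potentials. I would then invoke dominated convergence in $u$ to conclude that $n\,\text{Var}(\int D_n\,d\sigma)\to 0$, hence $R_n\underset{\text{Pr.}}{\longrightarrow}0$, and Slutsky's lemma would deliver \eqref{unbiased_CLT_sliced}.

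The main obstacle is exactly this passage to the limit under the $\sigma$-integral: one needs both the pointwise-in-$u$ convergence of the empirical $L^2$ errors and a uniform (in $u$ and $n$) domination strong enough for dominated convergence. The slightly reinforced moment condition of order $2p+\delta$, in excess of the $2p$ required by Theorem~\ref{general1dCLT}, is what provides the quantitative uniform control of $\|f_{n,u}\|_{L^2(P^u)}$ and $\|f_{0,u}\|_{L^2(P^u)}$ in terms of moments of $(P^u,Q^u)$, and combined with the connectedness of $\operatorname{supp}(P)$ (which guarantees that every projected marginal satisfies the one-dimensional assumptions) closes the argument. Once this uniformity is in place, the remainder is a routine Efron--Stein/linearization computation in the spirit of the unsliced Theorem~\ref{general1dCLT}.
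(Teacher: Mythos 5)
Your proposal follows the same Efron--Stein linearization strategy that the paper explicitly attributes to the cited reference \cite{sliced_wassertein2025}: linearize each projected transport cost around the population potential, transfer the variance bound from the projection-by-projection Efron--Stein estimate to the $\sigma$-average via Jensen/Fubini, and close with a classical i.i.d.\ CLT for $\int f_{0,u}(\langle u,\cdot\rangle)\,d\sigma(u)$. The paper itself does not detail the proof (it defers entirely to \cite{sliced_wassertein2025}), but your outline matches the announced approach, and you correctly pinpoint the genuine technical hurdle --- a domination uniform in $u$ strong enough to pass the $L^2(P^u)$-convergence of the empirical potentials through the $\sigma$-integral, which is precisely where the strengthened $2p+\delta$ moment hypothesis is needed.
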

We refer to \cite{sliced_arxiv} for a precise description of the limiting variance and a proof.

Unlike standard OT, $\mathcal S_p$ retains the favorable properties of the one-dimensional setting and it is possible to replace {the centering constant in  \eqref{unbiased_CLT_sliced} by the population counterpart}. {Theorem 4.2 in \cite{sliced_arxiv}} provides sufficient conditions for this goal. The assumptions on the density in Theorem \ref{teor:bias_1d} must be imposed for all the projected densities  $f_u$, and $J_\alpha$ must be replaced with its integrated version, $SJ_\alpha(P)= \int_{\mathbb S^{d-1}}J_\alpha(\operatorname{Pr}_u\sharp P)d\sigma(u)$, to ensure 
\begin{equation}\label{bias_convergence_sliced}
\sqrt{n}(\mathbb E[\mathcal{S}_p(P_n,Q)]-\mathcal{S}_p(P,Q)) \to 0  \ .
\end{equation}
Combining \eqref{unbiased_CLT_sliced} and \eqref{bias_convergence_sliced}, \cite{sliced_arxiv} provides the first CLT for $S_p$, {$p>1$}, which is also valid for measures without compact support.
\subsection{Gromov--Wasserstein distance}
{
Let $(\mathcal{X}, d_X, P)$ and $(\mathcal{Y}, d_Y, Q)$ be two metric measure spaces.
For $p \geq 1$, the $(p,q)$--Gromov--Wasserstein (GW) distance of order $p$ between them is defined as
\begin{equation}    \label{eq:Grom-Wass}
     \mathcal{GW}_{p,q}(P, Q) =\Bigg( \inf_{\pi \in \Pi(P, Q)} \int \int |d_X^q(x,x') - d_Y^q(y,y')|^p  
    d(\pi \otimes \pi)(x, y, x', y') \Bigg)^{\frac{1}{p}}.
\end{equation}
GW is a distance between metric probability spaces modulo isomorphisms.\footnote{In the category of metric probability spaces, two probability metric spaces $(\mathcal{X}, d_X, P)$ and $(\mathcal{Y}, d_Y, Q)$ are isomorphic if there exists an isometry between $(\mathcal{X}, d_X)$ and $(\mathcal{Y}, d_Y)$ pushing $P$ forward to $Q$.} GW was first proposed by \cite{Memoli-GW.FoCM.2011}. Some applications of GW can be found in \cite{Demetci.et.al.J.comp.bio.2022}, \cite{xu2019gromov} and \cite{Han.Rigolet.Stepaniants.SIMODs.2025}. For the computational aspects we refer to \cite{Rioux.Godfeld.Kato.GW.JMLR.2024}. The geometry and geodesic gradient flows of GW have been studied in \cite{zhang2025gradient}. The paper that provides the statistical complexity of GW is \cite{Zhang.Goldfeld.Mroueh.AoS.Gromov}, where further results such as duality and convergence rates of the entropic regularization are also shown.  The main result of \cite{Zhang.Goldfeld.Mroueh.AoS.Gromov} can be summarized as follows. 
\begin{theorem}\label{Theorem:Gromov-Wasserstein}
    Let $P\in \mathcal{P}(\R^{d_1})$ and $Q\in \mathcal{P}(\R^{d_2})$ be compactly supported. Fix $d=\min(d_1,d_2)$. Then, 
$$\E[ |\mathcal{GW}_{2,2}^2(P, Q) -  \mathcal{GW}_{2,2}^2(P_n, Q)|] 
         \lesssim  \frac{\log(n){\bf 1}_{d=4}}{n^{\frac{2}{\max(d,4)}}},  $$
as $n\to \infty$. 
\end{theorem}
The bound of Theorem~\ref{Theorem:Gromov-Wasserstein} is sharp, up to poly-log factors. Hence, GW suffers from the curse of dimensionality in a very similar way as the original OT problem. The proof uses the representation (see Corollary~4.1 in \cite{Zhang.Goldfeld.Mroueh.AoS.Gromov})} 
{
$$ \mathcal{GW}_{2,2}(P,Q) = \inf_{A\in \R^{d_X\times d_Y}} 32 \| A\|_{{\rm Fr}}^2+ \mathcal{T}_{{c}_A} (P,Q),$$
where $ \| A\|_{{\rm Fr}}$ denotes the Frobenius norm and $${c}_A(x,y)=-4\|x\|^2\|y\|^2-32 \langle x, A y\rangle .$$
Then GW admits a dual formulation and   $|\mathcal{GW}_{2,2}(P,Q)-\mathcal{GW}_{2,2}(P_n,Q)|$  can be bounded by a supremum of the  empirical processes indexed
by dual OT potentials.  Furthermore, the LCA holds. 
} 

{The same paper studies the entropic version of GW, showing that the rate of convergence of the cost is parametric. To the best of our knowledge, the sample complexity of the plans and dual potentials is unknown for GW.   }

\section{Applications}\label{section:applications}
\subsection{Applications of CLT for OT costs}
Understanding the asymptotic behavior of OT costs enables us to build tests to assess the similarity between distributions. Contrary to usual goodness of fit tests based on Kullback-Leibler distance, tests based on Monge-Kantorovich, a.k.a.\ Wasserstein distance, are better able to capture the structural variability of the distribution of the observations. The first tests were developed for medical or biological applications to capture the inter-individual variability of this type of data where the response is highly influenced by the individual characteristics. When the response was a function, tests were based on registration methods and the natural extension to distributional response was obtained by considering similarity tests based on OT-based distances as in \cite{dupuy2011non} or \cite{freitag2005hadamard} and \cite{freitag2007nonparametric}. In this context, optimal transport is viewed as a way to provide a sound geometrical interpretation of distributions and testing in the Wasserstein space of distributions enables us to obtain a better interpretation of the notion of similarity. Tests in the Wasserstein space are natural  for deformation models \cite{del2019centrala} or for data analysis on distributions as in \cite{cazelles2018geodesic}
 or \cite{boissard2015distribution}. \\
 The problem of assessing bias in algorithmic decisions is similar, to some extent, to previous applications. Actually, bias in AI arises when a variable, which characterizes a group of individuals, affects systematically the behavior of an algorithm. This implies that the decisions or the performance of the algorithm may be different for different subgroups, leading to possible infringement of their fundamental rights and thus to discrimination. Detecting such disparate behavior of the algorithm provides a natural framework for optimal-transport-based goodness-of-fit. For instance in the case where the population is split into a minority and a majority according to the value of a so-called sensitive variable ($A\in \{0,1\}$), testing for bias amounts to testing whether the algorithm or its loss exhibits different behavior for the two groups and looking at the distance between the corresponding conditional distribution when $A=0$ or $A=1$, namely $\mu_{A=0}(f)$ and $\mu_{A=1}(f)$. Note that choosing  Wasserstein distance to evaluate fairness is a relevant choice since OT transport cost is deeply related to bias discovery and bias mitigation as pointed out in \cite{gouic2020projection} and \cite{chzhen2020fair}. Regulations, such as the European AI Act, have determined a threshold $\Delta$ above which the dissimilarity is not acceptable leading to the nonconformity of the AI system. Hence, the corresponding statistical test for fairness using OT asymptotic behavior can be formulated by choosing the null hypothesis as    $$ H_0: \mathcal{W}_p(\mu_{A=1}(f),\mu_{A=0}(f))  \geq \Delta.$$ As pointed out in~\cite{CLTonedimensional}, rejection of the null hypothesis would yield statistical evidence that the algorithm has the same {behavior} over both groups and thus  is compliant with respect to the considered regulation. Fairness {tests based on Wasserstein distance are also provided}  in \cite{10.1145/3442188.3445927} and \cite{si2021testing}.
\subsection{Application to transport-based quantiles}\label{Section:quantiles}

{
Quantiles and ranks are the fundamental tools for semiparametric and nonparametric inference on the real line. However, their definitions are inherently linked to the distribution function, whose definition relies on the `canonical' order of the real line, which does not exist in higher dimensions. Several attempts have been made to generalize the quantile function to the multivariate setting. For instance, by means of depth theory and associated ranks \cite{Zuo-Serfling.AoS}. However, none of the approaches allows for nonparametric distribution-free inference. In general, depth functions do not characterize the distributions (e.g.~\cite{Nagy}), and their associated ranks lack distribution-freeness (see \cite{Hallinetal2021,Konen-Hallin,Hallin-review}).} 

{
Hallin et al.~\cite{Chernozhukovetal2017,Hallinetal2021} defined the transport-based (or center-outward) quantile function $Q_{\pm,P}$ of $P$ as the unique gradient of a convex function pushing a reference measure $\mu$ to $P$. (Such a function exists and is unique thanks to McCann's theorem \cite{McCann}.) The transport-based distribution $F_{\pm,P}$ of $P$ is the inverse of $Q_{\pm,P}$. To have isometric equivariance, the reference measure should be spherically symmetric --- even though other choices have been used in the literature, see \cite{Deb.Ghosal.Sen.2021.Nips,Deb-Sen.JASA}. The sample version ${\widehat F}_{\pm,P}$ of $F_{\pm,P}$ is the empirical transport map (for the Euclidean square cost) from $\mu_n$ --- a counting measure with $n$ atoms such that $\mu_n \xrightarrow{w} \mu$ --- to the empirical measure $P_n$. The associated transport-based ranks are $R_i = {\widehat F}_{\pm,P}(X_i)$ for $i=1, \dots, n$. It is easy to show that the transport-based ranks are uniformly distributed over the permutations of the $n$ atoms of $\mu_n$ and are maximally ancillary in the sense of Basu \cite{Basu}. This allows for nonparametric multivariate rank-based distribution-free testing \cite{Deb-Sen.JASA,ShiDrtonHallinHan2025,Shi-etal-AOS}. The transport-based distribution allows defining quantile regions $\mathcal{R}_\alpha$ with the property that 
$P(\mathcal{R}_\alpha) = \alpha$ for all $\alpha \in (0,1)$. This has been proved to be useful in multiple-output quantile regression \cite{delBarrion-regression} and autoregression \cite{gonzalezsanz.2025nonparametricvectorquantileautoregression,HallinLiu2023,HallinLaVecchiaLiu2022,HallinLaVecchiaLiu2023}. Furthermore, these quantile regions are robust in the sense of the breakdown point \cite{avellamedina.2024breakdownpointtransportbasedquantiles,paindaveine.2024.robustnesssemidiscreteoptimaltransport}.} 

{
However, the curse of dimensionality arises in the estimation of transport-based quantiles. The results summarized in this paper allow for finding the convergence rates of the transport-based quantiles and their regularized approximations. }

{
It is still an open problem to find the minimax rates for estimating the conditional OT map, which would have applications in regression \cite{delBarrion-regression}. A recent work provides non-sharp rates for the autoregression transport-based quantiles under strong mixing conditions \cite{gonzalezsanz.2025nonparametricvectorquantileautoregression}; the proof techniques seem adaptable to the regression setting. }

\section{Open problems}
While the distributional limit theory for OT is well developed by now and we have presented what we believe to be a reasonably complete description of it, some aspects of the theory are not completely understood. We would like to end this paper with a small sample of  open problems that we believe deserve further investigation.

The fluctuation CLT for OT (Theorem \ref{general1dCLT}) is valid in general dimension for every $c_p$ cost with $p>1$. Given the minimal assumptions needed for the case $p=1$ in the one-dimensional case one could wonder whether a similar result holds in higher dimension. As we have seen, $\sqrt{n}(\mathcal{T}_1(P_n,Q)-\mathbb{E}[\mathcal{T}_1(P_n,Q)])$ is {stochastically} bounded in any dimension. We think it would be of interest to get an answer to the following:
\begin{Problem} Can we find conditions when $d>1$ under which 
$$\sqrt{n}(\mathcal{T}_1(P_n,Q)-\mathbb{E}[\mathcal{T}_1(P_n,Q)])$$
converges weakly? Is it possible to get a Gaussian limit? 
\end{Problem} 

Even in the one-dimensional setup some questions remain unsolved. As an example, we can mention the weak convergence of the quantile process discussed in Theorem \ref{CLT_quantile_process}. The assumption $J_p(P)<\infty$ is a necessary condition in the result, since it is necessary for the integrability of the weighted Brownian bridge
in the limit. However, the monotonicity of $f\circ F^{-1}$ might not be necessary. It is not in the case $p=1$, as shown in the recent paper \cite{beare2025necessarysufficientconditionsconvergence}. With this motivation we think it is worth considering the next question.
\begin{Problem} Can we find necessary and sufficient conditions for the weak convergence of the quantile process
$$v_n(t)=\sqrt{n}(F_n^{-1}(t)-F^{-1}(t)),\quad 0<t<1,$$
as a random element in $L_p(0,1)$?
\end{Problem} 

In the null case $P=Q$ and strictly convex cost ($p>1$) the fluctuation CLT yields a null limiting distribution. With a faster rate there is still some potential room for a different CLT. Building upon earlier work by Ambrosio and coauthors (see \cite{AmbrosioStraTrevisan}), Michel Ledoux conjectured in \cite{Ledoux_conjecture} that when $P$ is the uniform distribution on the unit square $[0,1]^2$, then
\begin{equation}\label{ledoux}
n(\mathcal{T}_2(P_n,P)-\mathbb{E}[\mathcal{T}_2(P_n,P)])\underset w \to \xi,
\end{equation}
$\xi$ being some centered random variable. A result like \eqref{ledoux} holds under some conditions in dimension one. More generally, {we find the following to be a very interesting (and challenging) question. }

\begin{Problem} 
Does \eqref{ledoux} hold? In general dimension? Can we get a good description of the limit?
\end{Problem} 

Finally, we turn to an apparently simpler, yet open problem.
It is well-known that the optimal transport problem with Euclidean cost might admit several solutions. For instance, if $\mu={\rm Uniform}[0,1]^2$ and $\nu ={\rm Uniform}([2,3]\times [0,1])$, any coupling $\pi$ such that the vertical coordinates remain unchanged is optimal, i.e.,  $ \int | x_2-y_2|d\pi(x,y)=0 $, where $x=(x_1,x_2)$ and $y=(y_1,y_2)$. Note that in this case, since $\pi(y_1>x_1)=1$, we get   
\begin{eqnarray*}
\int \| x-y\|d\pi(x,y)&=& \int |x_1 -y_1|d\pi(x,y)\\
&=& \int (y_1-x_1)d\pi(x,y) =2. 
\end{eqnarray*}
However, it is easy to check that the empirical optimal transport plan is a mapping with probability one. The question is the following.  
\begin{Problem}[\cite{santambrogio}]
    Let $X_1, \dots, X_n\overset{iid}{\sim} {\rm Uniform}[0,1]^2$ and $Y_1, \dots, Y_n\overset{iid}{\sim} {\rm Uniform}([2,3]\times [0,1])$.  Let $\pi_n$ be the empirical  optimal transport plan. Find the limit as $n\to \infty$ of $\pi_n$. 
\end{Problem}

\bibliographystyle{amsplain}
\bibliography{referencias}

\clearpage
\begin{bibunit}[amsplain]

\section*{Supplementary Material} 

This document contains the proofs of several results in \cite{delbarrio2025distributionallimittheoryoptimal}. The notation is the same as in the cited reference. Citations here refer to the references list at the end of this document.

\begin{proof}[Proof of Theorem \ref{1dCLT}] 
The convergence statement \eqref{unbiased_CLT_1d} is Theorem 2.1 in \cite{CLTonedimensional}. The limiting variance is defined as follows. We set $h_p(x)=|x|^p$. Then, $h'_p(x)=\text{sgn}(x) p|x|^{p-1}$ and we set
$c_p(t;F,G)=\int_{F^{-1}(\frac 1 2)}^{F^{-1}(t)} h'_p(s-G^{-1}(F(s))ds$, $0<t<1$ and $\bar{c}_p(t;F,G)=
{c}_p(t;F,G)-\int_0^1 {c}_p(s;F,G)ds$. It can be shown that $c_p(\cdot;F,G)\in L_2(0,1)$ (Lemma A.1 in \cite{CLTonedimensional}). The limiting variance in \eqref{unbiased_CLT_1d}
is 
\begin{equation}\label{limiting_variance_1d}
\sigma^2(P,Q)=\int_0^1   \bar{c}^2_p(t;F,G)  dt.
\end{equation}
For a proof of the fact that $\sigma^2(P,Q)>0$ if and only if $P\ne Q$ we refer also to \cite{CLTonedimensional}. 

It remains only to prove \eqref{unbiased_CLT_1d_p1}. By Lemma \ref{Efron_Stein_L1} below $\sqrt{n}(\mathcal{T}_1(P_n,Q)-\mathbb{E}[\mathcal{T}_1(P_n,Q)])$ is stochastically bounded and from the comments after the proof we see that 
$${\rm Var}[Z_{n,M,1}]\leq \frac {C(P,M)}n,$$ where $Z_{n,M,1}=\int_{[-M,M]^C}|F_n(x)-G(x)|dx$ and $C(P,M)$ tends to zero as $M\to \infty$. We set also $$Z_{n,M,2}=\int_{-M}^M|F_n(x)-G(x)|dx.$$ 
Now, $$Z_{n,M,2}=\int_{-M}^M \left|F(x)-G(x)+\frac{\alpha_n(x)}{\sqrt{n}}\right|dx,$$
where $\alpha_n(x)=\sqrt{n}(F_n(x)-F(x))$ is the empirical process indexed by intervals $(-\infty,x]$, {with} $-M\leq x\leq M$. {We can see $\alpha_n=\frac 1 {\sqrt{n}}\sum_{i=1}^n (\mathbb{I}_{(X_i\leq x)}-F(x))$ as a random element with values in $L_1([-M,M])$, which is a cotype 2 Banach space. This implies that  $\mathbb{I}_{(X_i\leq \cdot)}-F(\cdot))$ is pregaussian if and only if
\begin{equation}\label{pregaussian}
\int_{-M}^{M} \sqrt{F(x)(1 - F(x))}\, dx<\infty,
\end{equation}
(see, e.g. p. 247 and p. 262  in \cite{ledoux1991probability}).
Condition \eqref{pregaussian} obviously holds. Hence (see Theorem 10.7 in \cite{ledoux1991probability}), since $L_1([-M,M])$ is of cotype 2 and $\mathbb{I}_{(X_i\leq \cdot)}-F(\cdot)$ is pregaussian,  $\alpha_n $ converges weakly to $ B \circ F$ in \( L_1([-M,M]) \), where \( B \) denotes a Brownian bridge on \( [0,1] \).
By Skorohod's theorem we can choose versions of $\alpha_n$ and $B\circ F$ (for which we will keep the same notation) such that $\int_{-M}^M |\alpha_n(x)-B\circ F(x)|\to 0$ a.s. as $n\to \infty$.} But then, if $$\tilde{Z}_{n,M,2}=\int_{-M}^M \left|F(x)-G(x)+\frac{B\circ F (x)}{\sqrt{n}}\right|dx$$
and we have that
\begin{equation}
     \label{equivalence_znM2}
\sqrt{n}\left|{Z}_{n,M,2}-\tilde{Z}_{n,M,2}\right|
\leq \int_{-M}^M 
\left|\alpha_n(x)-B\circ F(x)\right| dx\underset{a.s.}{\to} 0 .
\end{equation}
We observe next that 
\begin{equation}
    \label{limit_znM2}
{\sqrt{n}\left(\tilde{Z}_{n,M,2}-\int_{-M}^M \big|F(x)-G(x)\big|dx\right)}\underset{a.s.}{\to}  \int_{-M}^M v_{F,G}(x)dx,
\end{equation}
 with $v_{F,G}$ as in \eqref{def_v_FG}. In fact, 
\begin{eqnarray*}
\lefteqn{\sqrt{n}\Big(\tilde{Z}_{n,M,2}-\int_{-M}^M \big|F(x)-G(x)\big|dx\Big)}\hspace*{0.1cm}\\
&=& \int_{-M}^M \sqrt{n} \Bigg(\left|F(x)-G(x)+ \frac{B\circ F (x)}{\sqrt{n}}\right| -\big|F(x)-G(x)\big|\Bigg)dx.
\end{eqnarray*}
The integrand in the last expression converges pointwise to $\text{sgn}(F(x)-G(x))B(F(x))$ if $F(x)\ne G(x)$ and to $|B(F(x))|$ otherwise. Furthermore, it is upper bounded by $|B(F(x))|$ and the claim follows by dominated convergence.

In \eqref{equivalence_znM2} and \eqref{limit_znM2} we can easily check that we have also convergence of moments of order $r<2$. Hence, combining \eqref{equivalence_znM2}, \eqref{limit_znM2} and moment convergence (of all orders) we see that 
\begin{equation}\label{central_convergence}
\sqrt{n}(Z_{n,M,2}-\mathbb{E}[Z_{n,M,2}])\underset w \to \int_{-M}^M (v_{F,G}(x)-\mathbb{E}v_{F,G}(x))dx.
\end{equation}
Moment convergence ensures that the Wasserstein distance of order $r<2$ between the law of $\sqrt{n}(Z_{n,M,2}-\mathbb{E}[Z_{n,M,2}])$ and that of the random variable on the right tends to 0 as $n\to\infty$. 

Next, we set $$v_{F,G}^{(1)}(x)=|B\circ F(x)|\mathbb{I}(F(x)=G(x))$$ and $$v_{F,G}^{(2)}(x)=\text{sgn}(F(x)-G(x)) B\circ F(x) \mathbb{I}(F(x)\ne G(x)).$$ We  observe that for $M_1<M_2$
\begin{eqnarray*}
\lefteqn{V(M_1,M_2):=\mathbb{E}\left[\left(\int_{M_1}^{M_2}  (v_{F,G}(x)-\mathbb{E}[v_{F,G}(x)])dx\right)^2 \right]}\hspace*{1.5cm}\\
&\leq &2\mathbb{E}\left[\left(\int_{M_1}^{M_2}  (v^{(1)}_{F,G}(x)-\mathbb{E}[v^{(1)}_{F,G}(x)])dx\right)^2\right]\\&&+2 \mathbb{E}\left[\left(\int_{M_1}^{M_2}  v^{(2)}_{F,G}(x)dx\right)^2 \right]\\
&:=& 2 V_{1}+2V_{2}.
\end{eqnarray*}
The integral defining $V_{2}$ is a centered, Gaussian random variable and we can easily see that
$$V_{2}\leq \int_{M_1}^{M_2}\int_{M_1}^{M_2} (F(x\wedge y)-F(x)F(y))dxdy.$$
The integrand in the last expression is integrable over $\mathbb{R}^2$ (and the integral equals  the variance $\sigma^2(P)$). This entails that $V_2\to 0$ as $M_1,M_2\to \infty$. Similarly, using the fact that
$${\rm Cov}(|B(s)|,|B(t)|)\leq s\wedge t-st$$
for $0\leq s,t\leq 1$, { which follows easily from equation (6) in \cite{WellnerSmythe}}, we obtain that
$$V_{1}\leq \int_{M_1}^{M_2}\int_{M_1}^{M_2} (F(x\wedge y)-F(x)F(y))dxdy$$ 
and conclude that $V(M_1,M_2)\to 0$ as $M_1,M_2\to \infty$.
Now we can argue as in the proof of Theorem 5.1 in \cite{delBarrioGineMatran1999} to see that $\int_{-M}^M (v_{F,G}(x)-\mathbb{E}[v_{F,G}(x)])dx$ converges in $L_2$ as $M\to \infty$ to a random variable that we denote as
\begin{equation*}
\gamma(P,Q):=\int_{\mathbb{R}} (v_{F,G}(x)-\mathbb{E}[v_{F,G}(x)])dx.
\end{equation*}
Combining all the above estimates and using a classical $3\varepsilon$ argument we conclude that
$$\sqrt{n}(Z_{n}-\mathbb{E}[Z_{n}])\underset w \to G(P,Q).$$
\end{proof}

\begin{lemma}\label{Efron_Stein_L1} If $P$ and $Q$ are Borel probability measures on $\mathbb{R}^d$, $Q$ has finite mean and $P$ has finite variance $\sigma^2(P)$ and $P_n$ denotes the empirical measure on an i.i.d.~sample, $X_1,\ldots,X_n$, of observations from $P$, then
\begin{equation}\label{l1_var_bound}
\text{\em Var}(\mathcal{T}_1(P_n,Q))\leq \frac {\sigma^2(P)}n.
\end{equation}
\end{lemma}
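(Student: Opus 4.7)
The plan is to apply the Efron--Stein inequality directly to $Z:=\mathcal{T}_1(P_n,Q)$, viewed as a function of the i.i.d.\ sample $X_1,\ldots,X_n$. Exploiting the fact that $\mathcal{T}_1$ defines a metric on the space of Borel probabilities on $\mathbb{R}^d$ with finite first moment will reduce everything to an elementary transport-plan estimate. Let $X_1',\ldots,X_n'$ be an independent copy of the sample and let $P_n^{(i)}$ denote the empirical measure obtained by replacing $X_i$ with $X_i'$. Writing $Z_i':=\mathcal{T}_1(P_n^{(i)},Q)$, the standard form of the Efron--Stein inequality (see, e.g., \cite{MR3185193}) gives
$$\text{Var}(Z)\leq \tfrac{1}{2}\sum_{i=1}^n \mathbb{E}\bigl[(Z-Z_i')^2\bigr].$$

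The key step is to control $|Z-Z_i'|$. By the reverse triangle inequality for the metric $\mathcal{T}_1$, one has $|Z-Z_i'|\leq \mathcal{T}_1(P_n,P_n^{(i)})$. The measures $P_n$ and $P_n^{(i)}$ share $n-1$ atoms, differing only in that $\tfrac{1}{n}\delta_{X_i}$ is replaced by $\tfrac{1}{n}\delta_{X_i'}$. Taking the coupling that leaves the shared atoms in place and transports $\tfrac{1}{n}\delta_{X_i}$ onto $\tfrac{1}{n}\delta_{X_i'}$ yields an admissible plan of total cost $\tfrac{1}{n}|X_i-X_i'|$, whence $\mathcal{T}_1(P_n,P_n^{(i)})\leq \tfrac{1}{n}|X_i-X_i'|$.

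Combining these two inequalities, $(Z-Z_i')^2\leq \tfrac{1}{n^2}|X_i-X_i'|^2$. Since $X_i,X_i'$ are i.i.d.\ with distribution $P$, a short computation centered at the mean gives $\mathbb{E}[|X_i-X_i'|^2]=2\sigma^2(P)$, so that
$$\text{Var}(Z)\leq \tfrac{1}{2}\cdot n\cdot \frac{2\sigma^2(P)}{n^2}=\frac{\sigma^2(P)}{n}.$$

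There is no real obstacle in this argument: once one couples the metric property of $\mathcal{T}_1$ with the observation that moving a single atom of mass $1/n$ costs at most $\tfrac{1}{n}|X_i-X_i'|$, the bound is immediate from Efron--Stein. The assumption that $Q$ has finite first moment is used only to ensure, via the trivial product coupling, that $\mathcal{T}_1(P_n,Q)<\infty$ almost surely so that the variance is well defined.
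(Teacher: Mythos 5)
Your proposal is correct and takes essentially the same approach as the paper: both reduce to the Efron--Stein inequality together with the elementary bound $|Z - Z_i'| \le \tfrac{1}{n}\|X_i - X_i'\|$. The only cosmetic difference is in how that bound is obtained: you use the metric triangle inequality for $\mathcal{T}_1$ and an explicit single-atom coupling, whereas the paper passes through the Kantorovich--Rubinstein dual $\sup_{\|f\|_{\mathrm{Lip}}\le 1}\big|\int f\,d(P_n - P_n')\big|$; these are two phrasings of the same fact.
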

\begin{proof} By duality 
$$Z:=\mathcal{T}_1(P_n,Q)=\sup_{f:\  , \|f\|_{\text{\small Lip}}\leq 1} \Big|\int f d(P_n-Q)\Big|.$$ 
By the Efron-Stein inequality $${\rm Var}[\mathcal{T}_1(P_n,Q)]\leq \frac n 2 \E\left[(Z-Z')^2_+ \right]$$ if $Z':=\mathcal{T}_1(P_n',Q)$ and $P_n'$ denotes the empirical d.f.~on $X_1',X_2,\ldots,$ $X_n$, with $X_1'$ a further independent observation from $P$. We observe that
$$|Z-Z'|\leq \sup_{f:\  , \|f\|_{\text{\small Lip}}\leq 1} \Big|\int f d(P_n-P_n')\Big|\leq {\textstyle\frac 1 n} \|X_1-X_1'\|,$$
which implies that 
$${\rm Var}[\mathcal{T}_1(P_n,Q)]\leq \frac{\mathbb{E}\left[\|X_1-X_1'\|^2 \right]}{2n}=\frac{\sigma^2(P)}n.$$
\end{proof}
In the case of probabilities on the real line we can reach the same conclusion writing $F_n$ and  $G$ for the d.f.'s associated to $P_n$ and $Q$, respectively, and recalling that $\mathcal{T}_1(P_n,Q)=\int_{\mathbb{R}}|F_n(x)-G(x)|dx=:Z$. With this representation we see that   
$$|Z-Z'|\leq \int_{\mathbb{R}} |F_n(x)-\tilde{F}_n(x)|dx=\frac{1}n |X_1-X_1'|'$$
and the Efron-Stein inequality yields again \eqref{l1_var_bound}. We can argue similarly to get a tighter control of the variance of
$$\tilde{\mathcal{T}}^{(M)}_1(P_n,Q):=\int_M^{\infty}|F_n(x)-G(x)|dx.$$
In fact, denoting now $Z=\tilde{\mathcal{T}}^{(M)}_1(P_n,Q)$ and $Z'$ as above we see that
\begin{eqnarray*}
|Z-Z'|&\leq &\frac 1 n \int_M^{\infty} |\mathbb{I}(X_1\leq x)-\mathbb{I}(X_1'\leq x)|dx\\
&\leq &\frac 1 n |X_1-X_1'|\mathbb{I}((X_1>M)\cup(X_1'>M)).
\end{eqnarray*}
Hence, we conclude that 
\begin{eqnarray*}
\lefteqn{n{\rm Var}(\tilde{\mathcal{T}}^{(M)}_1(P_n,Q))}\hspace*{1cm}\\
&\leq &\mathbb{E}\big(|X_1-X_1'|^2\mathbb{I}((X_1>M)\cup(X_1'>M))\big).
\end{eqnarray*}
We observe that the last upper bound vanishes as $M\to\infty$ if $P$ has a finite second moment.

\begin{proof}[Proof of Theorem \ref{teor:bias_1d}] The case $p>1$ is considered in \cite{sliced_arxiv}. For the case $p=1$ we argue as in \eqref{limit_znM2}, but now the assumption $J_1(P)<\infty$ allows to use weak convergence of the empirical process $\sqrt{n} (F_n-F)$ in $L_1(\mathbb{R})$ to conclude
$$\sqrt{n}(\mathcal{T}_1(P_n,Q)-\mathcal{T}_1(P,Q))\underset{w}\to \int_{\mathbb{R}}v_{F,G}(x)dx, $$ 
with convergence of moments of order $r$ for $r<2$. As a consequence we see that
$$  \sqrt{n}(\mathbb{E}[\mathcal{T}_1(P_n,Q)]-\mathcal{T}_1(P,Q))
    \longrightarrow \int_{F(x)=G(x)} \mathbb{E}|B(F(x))| dx.$$
\end{proof}

\begin{proof}[Proof of Theorems \ref{1dCLT_nullcase} and \ref{CLT_quantile_process}] To prove weak convergence of the process $v_n$ to $\mathbb{V}
$ as random elements in $L_p(0,1)$ we can argue as in the proof of   \cite[Theorem~2.5]{SamworthJohnson2004}, based, in turn, on \cite[Theorem~2.1]{CsorgoHorvath1993} to conclude that there are versions of $v_n$ and Brownian bridges such that 
\begin{equation}\label{strong_approximation_1}
\int_{1/n}^{1-1/n} \left|v_n(t)-\frac{B_n(t)}{f(F^{-1}(t))}\right|^pdt\underset {\text{\small Pr.}} \to 0.
\end{equation} 
The assumption $J_p(P){ <\infty}$ is sufficient (and necessary) to ensure that $\frac{B_n}{f\circ F^{-1}}$ is an $L_p$-valued random element and it is also clear from it that
\begin{equation}\label{tails_1}
\int_{[{1/n},{1-1/n}]^c} \left|\frac{B_n(t)}{f(F^{-1}(t))}\right|^pdt\underset {\text{\small Pr.}} \to 0 .  \end{equation}
Hence, to conclude, it only remains to show that 
$$\int_{[{1/n},{1-1/n}]^c} |v_n(t)|^pdt\underset {\text{\small Pr.}} \to 0.$$
But this is equivalent to showing that
\begin{equation}\label{tails_2}
n^{\frac{p}{2}}\int_{1-1/n}^1 |X_{(n)}-F^{-1}(t)|^p dt\underset {\text{\small Pr.}} \to 0,
\end{equation}
where $X_{(n)}$ denotes the maximum of the sample, with a similar condition for the lower tail. To check \eqref{tails_2} we observe that
\begin{eqnarray*}
\int_{1-1/n}^1 |X_{(n)}-F^{-1}(t)|^p dt&\lesssim & 
\int_{1-1/n}^1 |X_{(n)}-m_n|^p dt\\
&+&\int_{1-1/n}^1 |F^{-1}(t)-m_n|^p dt,
\end{eqnarray*}
with $m_n$ denoting the median of $X_{(n)}$. This shows that \eqref{tails_2} will follow if we prove that
\begin{equation}\label{tails_3}
n^{\frac{p-2}{2}}|X_{(n)}-m_n|^p \underset {\text{\small Pr.}} \to 0
\end{equation}
and
\begin{equation}\label{tails_4}
n^{ \frac{p}{2}}\int_{1-1/n}^1 |F^{-1}(t)-m_n|^p dt {\to 0}.
\end{equation}

To prove \eqref{tails_3} we observe that $X_{(n)}\overset d = F^{-1}(U_{(n)})$, with $U_{(n)}$ following a beta distribution with parameters $n$ and $1$ and use the variance bound in the proof of Proposition B.8 in \cite{BobkovLedoux2019} (the version centered at medians), which, in our setup means that
\begin{equation}\label{variance_bound_BL}
n^{\frac{p-2}{2}}\mathbb{E}|X_{(n)}-m_n|^p\lesssim \int_0^1 \frac{(t(1-t))^{\frac{p}{2}}}{f^p(F^{-1}(t))} t^{  n-1} dt.
\end{equation}
By dominated convergence the last integral vanishes { as $n\to\infty$}, proving \eqref{tails_3}. 

To prove \eqref{tails_4} we will use in the following inequality,
\begin{equation}\label{hardy_ineq}
\int_t^\infty (x-t)^p f(x)dx\leq p^p \int_{t}^\infty \Big(\frac{1-F(x)}{f(x)}\Big)^p f(x)dx,    
\end{equation}
which follows once we use integration by parts to see that
$$\int_t^\infty (x-t)^p f(x)dx=p\int_t^\infty (x-t)^{p-1}\frac{1-F(x)}{f(x)}f(x)dx$$
and {apply} then Hölder's inequality (see \cite{SamworthJohnson2004} for the case $p=2$). Using \eqref{hardy_ineq} (with the density of $X_{(n)}$, namely, $n F(x)^{n-1} f(x)$), we obtain that
\begin{eqnarray*}
\lefteqn{\mathbb{E}\left[\left|X_{(n)}-F^{-1}(1-{\textstyle \frac 1 n})\right|^p \mathbb{I}(X_{(n)}\geq F^{-1}({ 1-}{\textstyle \frac  1n})) \right]}\hspace*{0.1cm} \\
&{=} & { \int_{F^{-1}(1-\frac 1 n)}^\infty \big(x-\textstyle F^{-1}\big(1-\frac 1 n \big)\big)^pn F(x)^{n-1} f(x)dx}
\\
&{\leq} & { p^p\int_{F^{-1}(1-\frac 1 n)}^\infty \Big(\frac{1-F(x)^n}{n F(x)^{n-1} f(x)}\Big)^pnF(x)^{n-1}  f(x)dx}
\\
&{\lesssim} & {\int_{F^{-1}(1-\frac 1 n)}^\infty \Big(\frac{1-F(x)}{ f(x)}\Big)^pn F(x)^{n-1} f(x)dx}
\\
&{=} & { \int_{1-\frac 1 n}^1 \Big(\frac{1-t}{ f(F^{-1}(t))}\Big)^pn t^{n-1} dt}
\\
&{\lesssim} & { {\textstyle \frac {1} {n^{\frac{p-2}{2}}}}\int_{1-\frac 1 n}^1 \frac{(t(1-t))^{\frac p 2}}{ f^p(F^{-1}(t))} t^{n-1} dt}
\\
&{\lesssim}  & { {\textstyle \frac {1} {n^{\frac{p-2}{2}}}} \int_{1-\frac 1 n}^1 \frac{(t(1-t))^{\frac{p}{2}}}{f^p(F^{-1}(t))}dt.}
\end{eqnarray*}{
In the step between the third and the fourth lines we have used the fact that $1-t^n\leq n (1-t)$, $0\leq t\leq 1$ and also that, for $t\geq 1-\frac 1 n$,  $t^{n-1}\geq (1-\frac 1 n)^{n-1}\to \frac 1 e>0$; later, the step between the fifth and the sixth lines follows upon observing that $(1-t)^{\frac p 2}\leq n^{-\frac p 2}$ if $1-\frac 1 n \leq t\leq 1$. Now, since $J_p(P)$ is finite we conclude that 
\begin{eqnarray}
\nonumber
\lefteqn{n^{\frac{p-2}{2}}\mathbb{E}\left[\left|X_{(n)}-F^{-1}(1-{\textstyle \frac 1 n})\right|^p \mathbb{I}(X_{(n)}\geq F^{-1}({ 1-}{\textstyle \frac  1n})) \right]}\hspace*{4.1cm} \\ \label{extremes_1}
&{\lesssim}  & { \int_{1-\frac 1 n}^1 \frac{(t(1-t))^{\frac{p}{2}}}{f^p(F^{-1}(t))}dt\to 0.}
\end{eqnarray}
}
Observe that \eqref{variance_bound_BL} shows also that
\begin{equation}\label{extremes_2}
n^{\frac {p-2} 2}\mathbb{E}\left[\left|X_{(n)}-m_n\right|^p \mathbb{I}(X_{(n)}\geq F^{-1}({1-\textstyle \frac  1n})) \right]\to 0. 
\end{equation}
But now we can see that
\begin{align*}
  &  n^{\frac{p-2}{2 p}}\left(\mathbb{P}\left(X_{(n)}\geq F^{-1}\left({ {\textstyle 1- \frac  1n}}\right)\right)\right)^{\frac{1}{p}}\\
 & \qquad \qquad  \qquad \qquad \times \left|m_n-F^{-1}\left(1-{\textstyle \frac 1 n }\right)\right|\\
&= n^{\frac{p-2}{2 p}} \Big(\mathbb{E} \Big[\left|m_n-F^{-1}\left(1-{\textstyle \frac 1 n }\right) \right|^p \\
& \qquad \qquad  \qquad \qquad \times\mathbb{I}\left(X_{(n)}\geq F^{-1}({1-\textstyle \frac  1n})\right) \Big]\Big)^{\frac{1}{p}}\\
&\leq \left( n^{\frac{p-2}{2}} \mathbb{E}\left[\left|m_n-X_{(n)}\right|^p \mathbb{I}(X_{(n)}\geq F^{-1}\left({1-\textstyle \frac  1n}\right) \right] \right)^{\frac{1}{p}}\\
&+
\Big(n^{\frac{p-2}{2}} \mathbb{E} \Big[\left|X_{(n)}-F^{-1}\left(1-{\textstyle \frac 1 n }\right)\right|^p\\
&\qquad  \qquad \qquad \qquad \times \mathbb{I}\left(X_{(n)}\geq F^{-1}\left({\textstyle  1-\frac  1n}\right) \Big) \right]  \Big)^{\frac{1}{p}}
\end{align*}
and we conclude, using again that $\mathbb{P}\left(X_{(n)}\geq F^{-1}({ {\textstyle 1- \frac  1n}})\right)$ $\to \frac 1 e>0$,
\begin{equation}\label{extremes_3}
n^{\frac{p-2}{2}}\left|m_n-F^{-1}\left(1-\textstyle\frac 1 n \right)\right|^p \to 0.
\end{equation}
This implies that we can replace $m_n$ with $F^{-1}(1-\textstyle \frac 1 n)$ for proving \eqref{tails_4}. But then, from \eqref{hardy_ineq} we see that
\begin{eqnarray*}
\lefteqn{n^{\frac{p}{2}}\int_{1-\frac 1 n}^1 |F^{-1}(t)-F^{-1}(1-{\textstyle \frac 1 n})|^pdt }\hspace*{2cm}\\
&\lesssim & n^{p/2 }\int_{1-1/n}^1 \frac{(1-t)^p}{f^p(F^{-1}(t))}dt\\
&\leq & \int_{1-\frac 1 n}^1 \frac{(1-t)^{\frac{p}{2}}}{f^p(F^{-1}(t))}dt\to 0.
\end{eqnarray*}
This completes the proof of Theorem \ref{CLT_quantile_process}. Theorem \ref{1dCLT_nullcase} follows by the continuous mapping theorem.
\end{proof}

\putbib[supprefs]

\end{bibunit}

\end{document}